\theoremstyle{definition}
\newtheorem{Def}{Definition}[section]
\theoremstyle{plain}
\newtheorem{Prop}[Def]{Proposition}
\newtheorem{Thm}[Def]{Theorem}
\newtheorem{Lem}[Def]{Lemma}
\newtheorem{Cor}[Def]{Corollary}
\theoremstyle{definition}
\newtheorem{Remark}[Def]{Remark}
\newcommand{\R}{\mathbb{R}} 
\newcommand{\LA}{\mathfrak{a}} 
\newcommand{\LG}{\mathfrak{g}} 
\newcommand{\LN}{\mathfrak{n}} 
\newcommand{\LS}{\mathfrak{s}} 
\newcommand{\inner}[2]{\langle #1 , #2 \rangle} 
\newcommand{\tr}{\mathop{\mathrm{tr}}\nolimits}
\numberwithin{equation}{section}
\def\rm#1{\mathrm{#1}}
\def\fr#1{\mathfrak{#1}}
\newcommand{\rank}{\mathop{\mathrm{rank}}\nolimits}
\newcommand{\ad}{\mathop{\mathrm{ad}}\nolimits}
\newcommand{\Der}{\mathop{\mathrm{Der}}\nolimits}
\newcommand{\Span}{\mathop{\mathrm{span}}\nolimits}
\newcommand{\Ric}{\mathop{\mathrm{Ric}}\nolimits}
\newcommand{\id}{\mathop{\mathrm{id}}\nolimits}
\newcommand{\Aut}{\mathop{\mathrm{Aut}}\nolimits}
\begin{document} 

\title[Realizations of some contact metric manifolds]{
Realizations of some contact metric manifolds as Ricci soliton real hypersurfaces
}

\author[J.~T.~Cho]{Jong Taek Cho}
\author[T.~Hashinaga]{Takahiro Hashinaga}
\author[A.~Kubo]{Akira Kubo}
\author[Y.~Taketomi]{Yuichiro Taketomi}
\author[H.~Tamaru]{Hiroshi Tamaru}
\address[J.~T.~Cho]{
Department of Mathematics, Chonnam National University, Gwangju 61186, Korea \\ 
}
\email{jtcho@chonnam.ac.kr}
\address[T.~Hashinaga]{
        National Institute of Technology, Kitakyushu College, 5-20-1 Shii, Kokuraminamiku, Kitakyushu, Fukuoka, 802-0985 Japan \\
        }
\email{hashinaga@kct.ac.jp}
\address[A.~Kubo]{
	Faculty of Economic Sciences, Hiroshima Shudo University, 
        Hiroshima 731-3195, Japan
        }
\email{akubo@shudo-u.ac.jp}
\address[Y.~Taketomi]{
	Department of Mathematics, Hiroshima University, 
        Higashi-Hiroshima 739-8526, Japan
        }
\email{y-taketomi@hiroshima-u.ac.jp}
\address[H.~Tamaru]{
	Department of Mathematics, Hiroshima University, 
        Higashi-Hiroshima 739-8526, Japan
        }
\email{tamaru@math.sci.hiroshima-u.ac.jp}

\keywords{Contact metric manifolds; Ricci soliton; Real hypersurfaces; Grassmannians} 
\thanks{2010 \textit{Mathematics Subject Classification}. 
Primary~53C40, Secondary~53C30, 53C25, 53C35} 
\thanks{
The first author was supported in part by Basic Science Research Program 
through the National Research Foundation of Korea (NRF) funded by 
the Ministry of Education, Science and Technology (2016R1D1A1B03930756). 
The second author was supported in part by JSPS KAKENHI (16K17603). 
The fifth author was supported in part by JSPS KAKENHI (26287012, 16K13757). 
}

\begin{abstract} 
Ricci soliton contact metric manifolds with certain nullity conditions 
have recently been studied by Ghosh and Sharma. 
Whereas the gradient case is well-understood, 
they provided a list of candidates for the nongradient case.
These candidates can be realized as Lie groups, 
but one only knows the structures of the underlying Lie algebras, 
which are hard to be analyzed apart from the three-dimensional case. 
In this paper, 
we study these Lie groups with dimension greater than three, 
and prove that the connected, simply-connected, and complete ones can be realized 
as homogeneous real hypersurfaces in noncompact real two-plane Grassmannians. 
These realizations enable us to prove, in a Lie-theoretic way, 
that all of them are actually Ricci soliton. 
\end{abstract} 

\maketitle 

\section{Introduction}

A class of contact metric manifolds with certain nullity conditions, 
so-called the $(\kappa, \mu)$-spaces, 
has been introduced by Blair, Koufogiorgos, and Papantoniou (\cite{BKP}) as follows. 

\begin{Def}
\label{def:kappa_mu}
Let $(\kappa, \mu) \in \mathbb{R}^2$. 
A contact metric manifold $(M, \eta, \xi, \varphi, g)$ is called a \textit{$(\kappa, \mu)$-space} 
if the Riemannian curvature tensor $R$ satisfies 
\begin{align} 
R(X,Y)\xi  = (\kappa I + \mu h) (\eta(Y) X - \eta(X) Y) 
\end{align} 
for any vector fields $X, Y \in \mathfrak{X}(M)$, where 
$I$ denotes the identity transformation and 
$h := (1/2)\mathcal{L}_\xi\varphi$ is the Lie derivative of $\varphi$ along $\xi$. 
\end{Def} 

The notion of $(\kappa,\mu)$-spaces is a generalization of Sasakian manifolds ($\kappa=1$ and $h=0$), 
and $(\kappa,\mu)$-spaces have fruitful geometric properties. 
Among others, $(\kappa,\mu)$-spaces have strongly pseudo-convex (integrable) CR-structure, 
and the class of $(\kappa, \mu)$-spaces is invariant under $D$-homothetic transformations. 
We refer to \cite{BKP} for details. 
Boeckx (\cite{Boe99}) has also studied $(\kappa, \mu)$-spaces deeply. 
He proved that every non-Sasakian $(\kappa,\mu)$-space is a locally homogeneous contact metric manifold, 
and its local geometry is completely determined by the dimension and the numbers $(\kappa, \mu)$.  

Recall that a Riemannian metric $g$ is called a \textit{Ricci soliton metric}, 
or just a \textit{Ricci soliton} for short, 
if there exist some constant $c \in \R$ and some vector field $X \in \mathfrak{X}(M)$ such that 
the Ricci tensor $\mathrm{Ric}_{g}$ satisfies 
\begin{align} 
\mathrm{Ric}_{g} = c g + \mathcal{L}_{X} g . 
\end{align} 
A Ricci soliton is said to be \textit{trivial} if it is Einstein, 
that is, $\mathcal{L}_{X} g = 0$ holds. 
Studies on Ricci solitons in the frame work of contact geometry are interesting and have been developed 
(see \cite{MR2735600, MR3333393, MR2434858} and references therein). 
For example, the first author has determined the structure of homogeneous contact gradient Ricci solitons (\cite{MR2764404}). 
In \cite{MR3333393}, 
Ghosh and Sharma have studied Ricci solitons on non-Sasakian $(\kappa, \mu)$-spaces. 

\begin{Thm}[\cite{MR3333393}] 
\label{thm:GS} 
Let $M$  be a non-Sasakian $(\kappa,\mu)$-space whose metric is a nontrivial Ricci soliton.
Then  $M$ is locally isometric to either $(0,0)$-space or $(0,4)$-space as contact metric manifolds. 
Especially, if $M$ is connected, simply-connected, and complete, 
then $M$ is isometric to one of the following spaces as contact metric manifolds$:$ 
\begin{enumerate}
  [$(1)$]
\item 
the three-dimensional Gaussian soliton $E^{3}$, 
\item 
the gradient shrinking rigid Ricci soliton $E^{n+1} \times S^{n}(4)$ with $n>1$, 
\item 
a simply-connected nongradient expanding Ricci soliton $(0,4)$-space. 
\end{enumerate}
\end{Thm}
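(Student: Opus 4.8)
The plan is to use the rigidity of the non-Sasakian $(\kappa,\mu)$-structure to turn the soliton equation into a finite set of algebraic constraints on the pair $(\kappa,\mu)$. First I would recall the explicit formula of Blair--Koufogiorgos--Papantoniou for the Ricci operator $Q$ of a non-Sasakian $(\kappa,\mu)$-space of dimension $2n+1$: it expresses $Q$ as a fixed linear combination of the identity $I$, the structure tensor $h$, and $\eta\otimes\xi$, say $Q = aI + bh + d\,\eta\otimes\xi$ with $a,b,d$ explicit affine functions of $\kappa,\mu,n$. In particular $Q\xi = 2n\kappa\,\xi$, so that $\Ric(\xi,\xi)=2n\kappa$ is constant; and by Boeckx's theorem the space is locally homogeneous, hence the scalar curvature $r$ is constant, so that $\Ric$ is divergence-free by the contracted Bianchi identity.

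Next I would rewrite the soliton equation $\Ric_g = cg + \mathcal{L}_X g$ in endomorphism form. Writing the $(1,1)$-tensor $Y\mapsto\nabla_Y X$ as $\nabla X = S + A$ with $S$ symmetric and $A$ skew, the equation says precisely that $S = Q - cI$, again the fixed combination $(a-c)I + bh + d\,\eta\otimes\xi$. The skew part $A$, the \emph{vorticity} of $X$, is left undetermined by the soliton equation itself; this is exactly the feature that distinguishes the nongradient case. To control $A$ I would differentiate $S$ covariantly and feed the result into the second-derivative identity $\nabla_Y\nabla_Z X - \nabla_Z\nabla_Y X - \nabla_{[Y,Z]}X = R(Y,Z)X$, substituting the standard structure equations of a $(\kappa,\mu)$-space --- namely $\nabla_Y\xi = -\varphi Y - \varphi hY$ together with the known expressions for $\nabla\varphi$ and $\nabla h$ --- and the explicit $(\kappa,\mu)$-curvature tensor. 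This yields a first-order linear system for $A$ coupled to the frame components of $X$.

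The main obstacle, and the computational heart of the argument, is to show that this system is consistent only in rigid situations. Contracting the identities against the distinguished directions, namely $\xi$ and the $\pm\sqrt{1-\kappa}$-eigenvectors of $h$ (which are genuine nonzero eigenvalues precisely because $M$ is non-Sasakian), I expect the scalar part of the relations to collapse and force $\kappa = 0$. Once $\kappa=0$, comparing the $h$-eigencomponents of the remaining equations should leave only $\mu = 0$ and $\mu = 4$ as the values for which a non-Killing field $X$ can exist, while every other $\mu$ forces $A$ and the traceless part of $S$ to vanish, i.e.\ the trivial Einstein case excluded by hypothesis. This elimination of the unknown $X$ is where the non-Sasakian assumption and the homogeneity are used most heavily.

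Finally, for the global conclusion I would pass from ``locally isometric to a $(0,0)$- or $(0,4)$-space'' to the three listed models using Boeckx's rigidity, which identifies each connected, simply-connected, complete non-Sasakian $(\kappa,\mu)$-space with the unique homogeneous model of the prescribed dimension and $(\kappa,\mu)$. The three-dimensional $(0,0)$-space is flat and is the Gaussian soliton $E^3$; the higher-dimensional $(0,0)$-space is recognized as the rigid product $E^{n+1}\times S^n(4)$ by matching the eigenvalues $4(n-1)$, $0$, $0$ of $Q$; and the $(0,4)$-space is the remaining example. To separate gradient from nongradient, it remains to test whether the one-form dual to the soliton field produced above is closed; this can be read off directly from the field equation, placing $E^3$ and $E^{n+1}\times S^n(4)$ in the gradient cases $(1)$--$(2)$ and the $(0,4)$-space in the nongradient case $(3)$.
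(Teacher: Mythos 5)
First, a point of reference: the paper does not prove this statement at all --- it is imported verbatim from Ghosh--Sharma \cite{MR3333393}, and the only things the paper itself proves are complementary to it (the realization of the $(0,4)$-space as a Lie hypersurface, and the \emph{existence} of the soliton structure via Lauret's solvsoliton theory). So your proposal can only be measured against the cited proof. Your skeleton does match Ghosh--Sharma in spirit: they too start from the Blair--Koufogiorgos--Papantoniou expression $Q = aI + bh + d\,\eta\otimes\xi$ with constant coefficients (which already gives constant scalar curvature without invoking Boeckx), and they too extract algebraic constraints by evaluating derived identities along $\xi$ and the $\pm\sqrt{1-\kappa}$ eigendistributions of $h$. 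The technical vehicle differs --- they work with the standard commutation formulas for $\mathcal{L}_X\nabla$ and $\mathcal{L}_X R$ obtained by differentiating $\mathcal{L}_X g = \mathrm{Ric} - cg$, whereas you propose the pointwise decomposition $\nabla X = S + A$ fed into $\nabla^2 X = R(\cdot,\cdot)X$; these are equivalent in principle, though note the normalization slip $S = \tfrac12 (Q - cI)$, not $Q - cI$.

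There are, however, two genuine gaps. The first is that the entire content of the theorem --- that the coupled first-order system for $A$ and $X$ is consistent only when $\kappa = 0$ and $\mu \in \{0,4\}$ --- appears in your text only as ``I expect the relations to collapse.'' Nothing you have stated before that point can do the collapsing: local homogeneity and constancy of the scalar curvature hold for \emph{every} non-Sasakian $(\kappa,\mu)$-space (e.g.\ the whole family $\kappa = c(2-c)$, $\mu = -2c$ of unit tangent bundles), so the elimination must come entirely from the undone curvature computation, and as written the proposal is a plan rather than a proof. The second gap is the gradient/nongradient separation at the end: the potential field $X$ in $\mathrm{Ric} = cg + \mathcal{L}_X g$ is determined only up to addition of a Killing field, so checking whether the dual one-form of \emph{the particular} $X$ your construction produces is closed cannot establish that the $(0,4)$-soliton is nongradient --- some other choice of $X$ could a priori be a gradient. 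Ruling this out requires a structural argument, e.g.\ Petersen--Wylie rigidity of gradient solitons (gradient would force a product $E^k \times N$ with $N$ Einstein, which the $(0,4)$-space is not), or, as this paper does in Proposition~\ref{prop:5-6}, Lauret's theorem (Proposition~\ref{prop:L11}) that a nontrivial solvsoliton is automatically nongradient and expanding; the sign $c<0$ in case $(3)$ likewise has to be derived rather than read off.
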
 

We refer to \cite{MR2265040} for details on Ricci solitons 
(some notions will be mentioned in Section~\ref{sec:RS}). 
In fact, the first two cases are well-known examples of gradient Ricci solitons (\cite{PW, MR2265040}), 
and are $(0,0)$-spaces. 

The third case, that is the case of $(0,4)$-spaces, is more complicated and quite interesting. 
Boeckx (\cite{Boe99}) constructed these spaces as Lie groups with left-invariant contact metric structures. 
According to his construction, 
the three-dimensional one is isometric to $\mathrm{Sol}$, 
in other words, 
the group $E(1,1)$ of rigid motions of the Minkowski two-space. 
This Ricci soliton has also been known (\cite{MR2604955}). 
Higher-dimensional $(0,4)$-spaces may be thought as a generalization of $\mathrm{Sol}$. 
For these spaces, 
one only knows the structures of the underlying Lie algebras, 
which are involved and hard to be analyzed. 

The main result of this paper gives realizations of $(0,4)$-spaces of dimension greater than three, 
as homogeneous real hypersurfaces 
in the noncompact real two-plane Grassmannians. 
Recall that a noncompact real two-plane Grassmannian $G_{2}^{\ast}(\mathbb{R}^{n+3})$ 
is a Hermitian symmetric space of noncompact type, which is K\"{a}hler, 
and a real hypersurface of a K\"{a}hler manifold admits the canonical almost contact metric structure 
(see Section~\ref{sec:contact} for details). 

\begin{Thm} 
\label{thm} 
Let $M$ be the connected, simply-connected, 
and complete $(0,4)$-space of dimension $2n+1$ with $n \geq 2$. 
Then $M$ is isomorphic to some homogeneous real hypersurface of $G_{2}^{\ast}(\mathbb{R}^{n+3})$ 
as contact metric manifolds. 
\end{Thm}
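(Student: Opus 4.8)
The plan is to construct an explicit homogeneous real hypersurface in $G_2^\ast(\R^{n+3})$ whose induced almost contact metric structure matches the $(0,4)$-space, and to do this entirely at the level of Lie algebras via the solvable Iwasawa model. Since $G_2^\ast(\R^{n+3})$ is a symmetric space of noncompact type, it admits an Iwasawa decomposition $\LG = \LK \oplus \LA \oplus \LN$, and the solvable part $\LS = \LA \oplus \LN$ acts simply transitively, so $G_2^\ast(\R^{n+3})$ is isometric to a solvable Lie group $S$ with a left-invariant metric. I would first write down the restricted root space decomposition of $\LG = \fr{so}(2,n+1)$ (type $B_2$ when $n+3 = 5$, and $BC_2$ in higher rank / with the appropriate multiplicities), identify the nilpotent algebra $\LN$ together with the $\LA$-action, and record the complex structure $J$ of the ambient Kähler manifold in these terms.

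Next I would produce the hypersurface as an orbit of a suitable codimension-one subgroup. The natural candidate is to take a hyperplane $\LS'$ in $\LS$ and let the corresponding subgroup $S'$ act; its orbit through the base point is a homogeneous real hypersurface $M'$, and the induced almost contact metric structure $(\eta, \xi, \varphi, g)$ comes from the ambient $(J,g)$ by the standard formulas for hypersurfaces in Kähler manifolds recalled in Section~\ref{sec:contact}. The key choice is the direction of the normal: I would pick it so that $\xi$ (the projection of $J\nu$) lies in the correct position relative to the root spaces, ensuring that the induced structure is genuinely contact. Then I would compute the second fundamental form of $M'$ and, through the Gauss equation, its curvature tensor $R$, and verify directly that $R(X,Y)\xi$ satisfies the defining relation of Definition~\ref{def:kappa_mu} with $(\kappa,\mu) = (0,4)$; here the tensor $h = (1/2)\mathcal{L}_\xi\varphi$ must be identified with the shape-operator data so that the coefficient of $h$ comes out to be exactly $4$ and that of $I$ exactly $0$.

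Having matched the local invariants, I would invoke Boeckx's rigidity result, quoted in the excerpt, that a non-Sasakian $(\kappa,\mu)$-space is locally homogeneous and completely determined up to local isometry by its dimension and the pair $(\kappa,\mu)$. This reduces the problem to checking that $M'$ is a non-Sasakian $(0,4)$-space of the right dimension $2n+1$, which forces $M'$ to be locally isomorphic, as a contact metric manifold, to the given $M$. To upgrade from local to global I would use that both $M$ and $M'$ are connected, simply-connected, complete, and homogeneous, so a local isomorphism of homogeneous structures extends to a global one. In particular the dimension count $\dim M' = \dim G_2^\ast(\R^{n+3}) - 1$ must equal $2n+1$, which I would confirm from $\dim G_2^\ast(\R^{n+3}) = 2(n+1)$.

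The hard part will be the explicit identification of the hypersurface Lie algebra and the verification that the induced structure is precisely $(0,4)$ rather than some other $(\kappa,\mu)$. The subtlety is twofold: first, selecting the right one-codimensional subalgebra and normal direction among the several natural choices so that the projection of $J\nu$ is tangent and yields a contact (not merely almost contact) structure; second, carrying out the Gauss-equation computation of $R(X,Y)\xi$ on the various root spaces, where the differing root multiplicities of $\LG_2^\ast(\R^{n+3})$ enter, and checking that the curvature splits exactly into the $\kappa I + \mu h$ form with the claimed constants. I expect that organizing the computation by the $\ad(\LA)$-eigenspace decomposition, and exploiting that the ambient space is symmetric (so its curvature is algebraically controlled by brackets), will be what makes this tractable.
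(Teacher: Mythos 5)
Your proposal is viable and shares the paper's scaffolding (the Iwasawa decomposition, the solvable model of $G_2^\ast(\R^{n+3})$, and a codimension-one subgroup orbit carrying the induced almost contact metric structure of Proposition~\ref{ACMM}), but it diverges from the paper at the decisive step, and in an interesting way. The paper never touches the Gauss equation or the curvature of the hypersurface. It first uses Boeckx's rigidity (Theorem~\ref{thm:boecks}) together with Proposition~\ref{prop:G-alpha-beta} at $(\alpha,\beta)=(0,2)$ to identify the abstract $M$ with Boeckx's group $G_{0,2}$; then, in Theorem~\ref{thm:5-7}, it writes down an explicit basis-to-basis correspondence ($\xi \mapsto \hat{\xi}$, $\xi^\perp \mapsto \hat{X}_2$, $T \mapsto \hat{Y}_2$, $-Z_i$ and $Y_i$ to the remaining $\hat{X}$'s and $\hat{Y}$'s) and checks that the bracket relations of $\fr{s}_N$ (Lemma~\ref{lem:bracket}) match those of $\fr{g}_{0,2}$; simple connectedness upgrades this to a Lie group isomorphism preserving the contact metric structures. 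The only hypersurface-side analytic verification is contactness, $\Phi = d\eta$, done by hand in Proposition~\ref{prop:S_Ncontact}. You instead propose to prove directly, via the shape operator and the Gauss equation, that the hypersurface is a non-Sasakian $(0,4)$-space, and then to apply Theorem~\ref{thm:boecks} to $M$ and the hypersurface, bypassing the model $\fr{g}_{0,2}$ altogether. Both routes are correct: the paper's bracket-matching is a short finite check but imports from Boeckx the fact that $G_{0,2}$ is a $(0,4)$-space, whereas yours is self-contained on that point and yields the shape operator and the tensor $h$ explicitly, at the cost of a substantially heavier computation.

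Two concrete cautions for your version. First, the restricted root system of $\fr{so}(2,n+1)$ is of type $B_2$ for every $n \ge 2$, never $BC_2$; what varies with $n$ is only the multiplicity $n-1$ of the two root spaces $\fr{g}_{\alpha_2}$ and $\fr{g}_{\alpha_1+\alpha_2}$ (compare Lemma~\ref{lem_rootsp}). Second, and more seriously, the condition $\Phi = d\eta$ is not scale-invariant: rescaling the ambient metric by $\lambda^2$ rescales $\eta$ by $\lambda$ but $\Phi$ by $\lambda^2$, so contactness pins down the normalization of the invariant metric, and a wrong scale yields a genuine $(\kappa,\mu)$-space only after a $D$-homothetic deformation, with constants different from $(0,4)$ (Remark~\ref{rem:BKP}). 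The paper fixes this by taking $c = 2\sqrt{2}$, i.e.\ minimal sectional curvature $-8$; your plan must build in this normalization before the Gauss-equation computation, in addition to choosing the normal direction $N = (1/\sqrt{2})(-A_1-A_2) \in \fr{a}$ correctly.
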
 

This homogeneous real hypersurface has been constructed in \cite{MR2015258}, 
and studied in \cite{MR3326043, KT}. 
This hypersurface can be obtained as an orbit of some solvable Lie group, 
which is in fact a codimension one subgroup of the solvable part $AN$ 
of the Iwasawa decomposition $\mathrm{SO}_0(2,n+1) = KAN$. 
In view of these facts, 
one can apply a general theory of Ricci soliton solvmanifolds (see Section~\ref{sec:RS}), 
and a simple Lie-theoretic argument proves the following. 

\begin{Cor}
\label{cor} 
Every connected, simply-connected, and complete $(0,4)$-space is Ricci soliton. 
Therefore, a connected, simply-connected, and complete non-Sasakian $(\kappa, \mu)$-space 
is Ricci soliton if and only if $(\kappa, \mu) = (0,0)$ or $(0,4)$.  
\end{Cor}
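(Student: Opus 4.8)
The plan is to split the first assertion by dimension and then obtain the equivalence by combining it with Theorem~\ref{thm:GS}. Let $M$ be a connected, simply-connected, and complete $(0,4)$-space. If $\dim M = 3$, then as recalled in the introduction $M$ is isometric to $\mathrm{Sol} = E(1,1)$, which is already known to be a Ricci soliton (\cite{MR2604955}), so nothing new is needed there. The substantial case is $\dim M = 2n+1$ with $n \geq 2$, and here I would feed $M$ into Theorem~\ref{thm}: it realizes $M$, as a contact metric manifold, as the orbit of a solvable Lie group $S$ acting simply transitively, where $S$ is a codimension-one subgroup of the solvable part $AN$ of $\mathrm{SO}_0(2,n+1)=KAN$. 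Thus $M = (S,g)$ is a solvmanifold with left-invariant metric $g$, and the problem becomes purely Lie-algebraic.

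The core step is to verify the algebraic Ricci soliton condition for $\mathfrak{s} = \Lie(S)$. Concretely, I would use the explicit description of $\mathfrak{s}$ as a codimension-one subalgebra of $\mathfrak{a}\oplus\mathfrak{n}$, together with its inner product, as recorded in \cite{MR2015258, KT}, and exhibit a constant $c \in \R$ and a derivation $D \in \Der(\mathfrak{s})$ with $\Ric = c\,\id + D$, where $\Ric$ denotes the Ricci operator of $g$. Once this identity is in hand, the general theory of Ricci soliton solvmanifolds recalled in Section~\ref{sec:RS}, namely that an algebraic Ricci soliton solvmanifold is a Ricci soliton, immediately yields that $M$ is Ricci soliton. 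The natural source of the derivation $D$ is the ambient Einstein structure: $\mathfrak{a}\oplus\mathfrak{n}$ is the Iwasawa solvable algebra of a symmetric space of noncompact type, hence an Einstein solvmanifold whose nilradical is a nilsoliton, and the soliton derivation on $\mathfrak{s}$ should be assembled from that nilsoliton derivation together with the $\ad$-action of the abelian part surviving in $\mathfrak{s}$.

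I expect the main obstacle to be precisely the construction and verification of $D$: one must track how the Ricci operator of $\mathfrak{s}$ differs from the scalar Ricci operator of the Einstein ambient $\mathfrak{a}\oplus\mathfrak{n}$ after passing to a codimension-one subalgebra, using the standard formula for the Ricci operator of a metric solvable Lie algebra (involving the mean curvature vector and the symmetric and skew parts of $\ad$), and then check simultaneously that $\Ric - c\,\id$ respects the bracket relations of $\mathfrak{s}$, i.e. is a derivation. This is the one genuinely computational point; everything else is bookkeeping.

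For the equivalence in the second assertion, I would argue as follows. The ``if'' direction is then immediate: $(0,4)$-spaces are Ricci soliton by the first assertion, while the connected, simply-connected, complete $(0,0)$-spaces are exactly the Gaussian soliton $E^3$ and $E^{n+1}\times S^n(4)$, which are well-known gradient Ricci solitons by cases $(1)$--$(2)$ of Theorem~\ref{thm:GS} and the remark following it. For the ``only if'' direction, let $M$ be a non-Sasakian $(\kappa,\mu)$-space that is Ricci soliton. If the soliton is nontrivial, Theorem~\ref{thm:GS} forces $(\kappa,\mu)=(0,0)$ or $(0,4)$ directly. If it is trivial, then $M$ is Einstein, and I would rule out the remaining values using the standard expression for the Ricci operator of a $(\kappa,\mu)$-space together with the constraint $\kappa \le 1$ (with equality only in the Sasakian case): the Einstein condition must annihilate the $h$- and $\eta\otimes\xi$-terms, which in dimension $\ge 5$ forces $\kappa>1$ and is thus impossible, while in dimension $3$ it forces exactly $(\kappa,\mu)=(0,0)$. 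Combining the two directions gives the stated equivalence.
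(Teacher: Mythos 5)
Your proposal is correct and follows the paper's global architecture (dimension three via $\mathrm{Sol}$, dimension $\geq 5$ via the realization Theorem~\ref{thm}, and the equivalence via Theorem~\ref{thm:GS}), but at the core step you take a genuinely different, more computational route than the paper. You plan to compute the Ricci operator of $\fr{s}_N$ from its structure constants and exhibit $c$ and a derivation $D$ with $\Ric = c\,\id + D$ by hand, assembling $D$ from the nilsoliton derivation of $\fr{n}$ and the surviving abelian part; you correctly flag the verification that $\Ric - c\,\id$ is a derivation as the hard point. The paper avoids this computation entirely: it proves a rank-reduction statement for solvsolitons (Proposition~\ref{prop:solvsoliton}), which follows by checking that the four conditions of Lauret's structural characterization (Theorem~\ref{thm:lauret}) are inherited by any subalgebra of the form $\fr{a}' \oplus \fr{n}$, and then applies it with $\fr{a}' = \Span\{T\}$, using only that the ambient Iwasawa algebra $\fr{a}\oplus\fr{n}$ is Einstein with negative constant, hence a solvsoliton (this is Proposition~\ref{prop:S_E}(3)); no curvature of $\fr{s}_N$ is ever computed. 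The structural route also buys more: via Heber's criterion (Theorem~\ref{thm:heber}) the paper checks that the mean curvature vector $H_0 = 2\sqrt{2}A_1 + 2\sqrt{2}(n-1)A_2$ does not lie in $\Span\{T\}$ for $n \geq 2$, so the solvsoliton is nontrivial, and Proposition~\ref{prop:L11} then yields that it is nongradient and expanding (Proposition~\ref{prop:5-6}), matching case $(3)$ of Theorem~\ref{thm:GS} --- a refinement your approach, which only certifies ``Ricci soliton,'' does not recover without the nontriviality check. Conversely, your treatment of the trivial case in the ``only if'' direction is a genuine plus: since Theorem~\ref{thm:GS} as stated concerns nontrivial solitons, the Einstein case must be excluded, and your argument via the standard Ricci operator formula of a $(\kappa,\mu)$-space (forcing $\mu = -2(n-1)$ and $\kappa = (n^2-1)/n > 1$ when $n \geq 2$, and $(\kappa,\mu)=(0,0)$ when $n=1$) is correct and makes explicit a point the paper leaves implicit.
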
 

We note that, in \cite{MR3333393}, 
it has not been explicitly mentioned whether $(0,4)$-spaces of higher dimension are Ricci soliton or not. 
Therefore, our result complements the theorem of Ghosh and Sharma, 
from the viewpoint of submanifold geometry of symmetric spaces of noncompact type. 

\section{Notes on contact metric manifolds}
\label{sec:contact} 

In this section, 
we recall some necessary notions for contact metric manifolds, 
especially for $(\kappa,\mu)$-spaces. 
Let $M$ be a $(2n+1)$-dimensional manifold, 
and $\mathfrak{X}(M)$ denote the set of all smooth vector fields on $M$. 

\begin{Def} 
We call $M$ an \textit{almost contact manifold} if it is equipped with a $1$-form $\eta$, 
a vector field $\xi \in \mathfrak{X}(M)$, and a $(1,1)$-tensor field $\varphi$ such that 
\begin{align}\label{eq:contact1}
\eta(\xi) = 1, \quad \varphi^2 X = - X + \eta(X) \xi \quad (\forall X \in \mathfrak{X}(M)) . 
\end{align}
\end{Def} 

An almost contact manifold is denoted by a quadruplet $(M, \eta, \xi, \varphi)$. 
The vector field $\xi$ is called the \textit{characteristic vector field}. 
Note that one has 
\begin{align} \label{eq:contact2} 
\varphi \xi = 0, \quad \eta \circ \varphi = 0 . 
\end{align} 

\begin{Def} 
Let $(M, \eta, \xi, \varphi)$ be an almost contact manifold. 
Then, a Riemannian metric $g$ is called an \textit{associated metric} if it satisfies 
\begin{align} \label{eq:contact3}
g (\varphi X, \varphi Y) = g(X, Y) - \eta(X) \eta(Y) \quad (\forall X, Y \in \mathfrak{X}(M)) . 
\end{align}
\end{Def} 

We call such $(M, \eta, \xi, \varphi, g)$ an \textit{almost contact metric manifold}. 
Note that, for an almost contact manifold $(M, \eta, \xi, \varphi)$, 
there always exists an associated metric (see \cite{Sas}). 
It follows from (\ref{eq:contact1}), (\ref{eq:contact2}), and (\ref{eq:contact3}) that 
\begin{align} 
\eta(X) = g(X, \xi) \quad (\forall X \in \mathfrak{X}(M)) . 
\end{align} 

Typical examples of almost contact metric manifolds are given by real hypersurfaces in K\"{a}hler manifolds. 
Note that we need them in our main theorem. 

\begin{Prop}[{see \cite{MR3326043, B_lec}}] \label{ACMM} 
Let $(\overline{M}, J, g)$ be a K\"{a}hler manifold, 
and $M$ be a connected oriented real hypersurface in $\overline{M}$. 
Denote by $N$ a unit normal vector field of $M$. 
We define the structure $(\eta, \xi, \varphi, g)$ on $M$ as follows$:$ 
\begin{itemize}
  \item the Riemannian metric $g$ is induced from the Riemannian metric on $\overline{M}$,
  \item the characteristic vector field $\xi$ is defined by $\xi := -J N$, 
  \item the $1$-form $\eta$ is the metric dual of $\xi$, that is, $\eta(X) = g(X, \xi)$, 
  \item the $(1,1)$-tensor field $\varphi$ is defined by $\varphi X = J X - \eta (X) N$.
\end{itemize}
Then, $(M, \eta, \xi, \varphi, g)$ is an almost contact metric manifold.
\end{Prop}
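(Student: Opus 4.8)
The plan is to verify directly the three defining relations of an almost contact metric manifold, namely $\eta(\xi)=1$ and $\varphi^2 X = -X + \eta(X)\xi$ from \eqref{eq:contact1}, together with the compatibility \eqref{eq:contact3} of $g$ with the new structure. Throughout I would use only the standard properties of a Kähler (in particular almost Hermitian) manifold: $J^2 = -\id$, the Hermitian identity $g(JX, JY) = g(X,Y)$, and the equivalent skew-adjointness $g(JX, Y) = -g(X, JY)$, together with $g(N,N)=1$.

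First I would record two preliminary facts. Since $J$ is skew-adjoint, $g(\xi, N) = -g(JN, N) = g(N, JN) = -g(JN,N)$, whence $g(\xi,N)=0$, so $\xi = -JN$ is indeed tangent to $M$. Next, the crucial identity is the value of the normal component of $JX$ for $X$ tangent to $M$: using skew-adjointness and $JN = -\xi$,
\[
g(JX, N) = -g(X, JN) = g(X, \xi) = \eta(X).
\]
This shows that $\varphi X = JX - \eta(X) N$ is exactly the tangential projection of $JX$ onto $TM$, so $\varphi$ is a well-defined $(1,1)$-tensor on $M$; this is the only point in the argument that requires any care.

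With these in hand the remaining verifications are mechanical. For \eqref{eq:contact1}, the Hermitian identity gives $\eta(\xi) = g(\xi,\xi) = g(JN, JN) = g(N,N) = 1$. For $\varphi^2$, I would first note that $\eta(\varphi X) = g(JX,\xi) - \eta(X) g(N,\xi) = -g(JX, JN) = -g(X,N) = 0$, using $g(N,\xi)=0$ and $\xi = -JN$, and then compute
\[
\varphi^2 X = J(\varphi X) - \eta(\varphi X) N = J(JX - \eta(X) N) = -X - \eta(X) JN = -X + \eta(X)\xi,
\]
by $J^2 = -\id$ and $JN = -\xi$. Finally, for \eqref{eq:contact3} I would expand
\[
g(\varphi X, \varphi Y) = g(JX, JY) - \eta(X) g(N, JY) - \eta(Y) g(JX, N) + \eta(X)\eta(Y) g(N,N),
\]
and substitute $g(JX,JY) = g(X,Y)$, $g(JX,N) = \eta(X)$, $g(JY,N) = \eta(Y)$, and $g(N,N) = 1$; the three correction terms collapse to $-\eta(X)\eta(Y)$, yielding $g(\varphi X, \varphi Y) = g(X,Y) - \eta(X)\eta(Y)$. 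As every step is a direct consequence of the Kähler structure, there is no genuine obstacle here: the proof is a bookkeeping exercise in signs, the only substantive input being the identity $g(JX,N)=\eta(X)$ that guarantees $\varphi X$ is tangent to $M$.
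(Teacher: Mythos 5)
Your verification is correct. Note that the paper itself offers no proof of Proposition~\ref{ACMM} at all: it is quoted with a pointer to \cite{MR3326043, B_lec}, so there is no in-paper argument to compare against, and what you have written is precisely the standard verification found in those references. You correctly isolate the one substantive point, the identity $g(JX,N)=\eta(X)$ for tangent $X$, which exhibits $\varphi X = JX - \eta(X)N$ as the tangential part of $JX$; after that, $\eta(\xi)=1$, $\eta\circ\varphi=0$, $\varphi^2 = -\mathrm{id} + \eta\otimes\xi$, and the associated-metric condition all follow mechanically from $J^2=-\mathrm{id}$ and $g(J\cdot,J\cdot)=g(\cdot,\cdot)$, exactly as you compute. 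Two minor remarks. First, your displayed chain for $g(\xi,N)=0$ is circular as written: it begins and ends with $-g(JN,N)$, so by itself it proves nothing; the decisive step is combining skew-adjointness $g(JN,N)=-g(N,JN)$ with the symmetry $g(N,JN)=g(JN,N)$ to force $g(JN,N)=-g(JN,N)=0$, and you should say so explicitly. Second, your proof in fact establishes more than the stated proposition: nowhere do you use $\nabla J = 0$, so the conclusion holds for any real hypersurface of an almost Hermitian manifold. The K\"ahler hypothesis only becomes essential later in the paper, when one asks whether the almost contact metric structure is actually \emph{contact} metric (i.e.\ $\Phi = d\eta$), as in Proposition~\ref{prop:S_Ncontact}; it is worth being aware that this stronger property genuinely fails in general and is not part of what you proved.
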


For an almost contact metric manifold $(M, \eta, \xi, \varphi, g)$, 
the fundamental 2-form $\Phi$ on $M$ is defined by 
\begin{align} 
\Phi (X, Y) = g(X, \varphi Y) \quad (X, Y \in \mathfrak{X}(M)). 
\end{align}

\begin{Def}
An almost contact metric manifold $(M, \eta, \xi, \varphi, g)$ is called a 
\textit{contact metric manifold} 
if $\Phi = d\eta$ holds. 
\end{Def}

A contact metric manifold is denoted by $(M, \eta, \xi, \varphi, g)$, 
and $(\eta, \xi, \varphi, g)$ is called a \textit{contact metric structure} on $M$. 
Note that 
$\Phi = d\eta$ implies 
\begin{align} 
\eta \wedge (d\eta)^n \ne 0 , 
\end{align} 
where we recall $\dim M = 2n+1$. 

In this paper we study $(\kappa, \mu)$-spaces, 
defined in Definition~\ref{def:kappa_mu}. 
We here summarize some basic facts on them. 

\begin{Remark}[\cite{BKP}] 
\label{rem:BKP}
A $(\kappa , \mu)$-space satisfies $\kappa \le 1$. 
Furthermore, one has the following: 
\begin{enumerate} 
\item 
$\kappa = 1$ implies $h=0$. 
Therefore, a contact metric manifold is Sasakian if and only if 
it is a $(\kappa, \mu)$-space with $\kappa = 1$. 
\item 
Typical examples of non-Sasakian $(\kappa, \mu)$-spaces are given by 
the unit tangent sphere bundles 
over Riemannian manifolds of constant curvature $c \ne 1$, 
equipped with the natural contact metric structures. 
They are non-Sasakian $(\kappa, \mu)$-spaces with $\kappa = c(2-c)$ and $\mu = -2c$. 
\item 
The class of all $(\kappa, \mu)$-spaces are preserved by $D$-homothetic deformations. 
For a contact metric manifold $(M, \eta, \xi, \varphi, g)$, 
a $D$-homothetic deformation means the change of the structure tensors by 
\begin{align*} 
\bar{\eta} := a \eta , \quad 
\bar{\xi} := (1/a) \xi , \quad 
\bar{\varphi} := \varphi , \quad 
\bar{g} := a g + a (a-1) \eta \otimes \eta , 
\end{align*} 
where $a$ is a positive constant. 
Then, $(M, \bar{\eta}, \bar{\xi}, \bar{\varphi}, \bar{g})$ is a contact metric manifold, 
and a $D$-homothetic deformation maps a $(\kappa, \mu)$-space to a $(\bar{\kappa}, \bar{\mu})$-space, 
where 
\begin{align*} 
\bar{\kappa} = (\kappa + a^2 -1)/a^2 , \quad 
\bar{\mu} = (\mu + 2a -2)/a . 
\end{align*} 
\end{enumerate} 
\end{Remark} 

Non-Sasakian $(\kappa, \mu)$-spaces, that is the case of $\kappa < 1$, 
have deeply been studied by Boeckx (\cite{Boe}). 
Among others, the following local rigidity is fundamental. 

\begin{Thm}[{\cite[Theorem~3]{Boe}}] 
\label{thm:boecks} 
Let $(M, \eta, \xi, \varphi, g)$ and $(M', \eta', \xi', \varphi', g')$ be 
$(2n+1)$-dimensional non-Sasakian $(\kappa, \mu)$-spaces. 
Then they are locally isometric as contact metric manifolds. 
In particular, if both are connected, simply-connected, and complete, then they are globally isometric. 
\end{Thm}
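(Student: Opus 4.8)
The statement is precisely the local rigidity theorem of Boeckx, so the plan is to reconstruct its proof along the following lines. Since being locally isometric as contact metric manifolds is an equivalence relation, it suffices to fix, for each admissible pair $(\kappa,\mu)$ with $\kappa<1$ and each dimension $2n+1$, a single standard non-Sasakian $(\kappa,\mu)$-space and to prove that an arbitrary one is locally isometric to it. The first step is to exploit the pointwise algebra of such a space. On a non-Sasakian $(\kappa,\mu)$-space one has $\kappa<1$ (Remark~\ref{rem:BKP}), and the self-adjoint operator $h$ has the constant eigenvalues $0$ and $\pm\lambda$ with $\lambda:=\sqrt{1-\kappa}>0$; the $0$-eigenspace is $\mathbb{R}\xi$, while the $\pm\lambda$-eigenspaces $D_{\pm\lambda}$ are $n$-dimensional and interchanged by $\varphi$. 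Consequently, at chosen base points $p\in M$ and $p'\in M'$ one can build a linear isometry $F\colon T_pM\to T_{p'}M'$ sending $\xi$ to $\xi'$ and intertwining $\varphi,h$ with $\varphi',h'$, because the entire eigenvalue pattern depends only on $\kappa$ and $n$.

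The second step is to upgrade this pointwise match to an agreement of curvature data. The key input is the explicit formula of Blair--Koufogiorgos--Papantoniou (\cite{BKP}): on a non-Sasakian $(\kappa,\mu)$-space the curvature tensor $R$ is a universal expression in $g,\eta,\xi,\varphi,h$ and the constants $\kappa,\mu$. In particular $F$ automatically intertwines $R$ with $R'$, since it intertwines all the structure tensors and the two spaces share the same $(\kappa,\mu)$. To control higher-order data I would next record the covariant derivatives of the structure tensors, such as $\nabla_X\xi=-\varphi X-\varphi hX$ together with the analogous universal formulas for $\nabla\varphi$ and $\nabla h$ derived in \cite{BKP, Boe99}. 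Feeding these into $\nabla R$ and iterating shows, by induction, that every $\nabla^{k}R$ is again a universal polynomial in $g,\eta,\xi,\varphi,h,\kappa,\mu$, so $F$ intertwines $\nabla^{k}R$ with $(\nabla')^{k}R'$ for all $k$.

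The final step is to pass from this infinitesimal agreement to an honest local isometry respecting the contact structure. Here I would invoke that non-Sasakian $(\kappa,\mu)$-spaces are locally homogeneous (\cite{Boe99}); by Singer's theorem the order of covariant derivatives that must be matched is finite, and the Cartan--Ambrose--Hicks theorem, applied in the category of Riemannian manifolds carrying the extra tensor fields $\xi,\varphi,h$, produces a local isometry whose differential at $p$ is $F$ and which therefore intertwines $(\eta,\xi,\varphi,g)$ with $(\eta',\xi',\varphi',g')$. For the global claim, connectedness, simple connectedness, and completeness let this local isometry be continued unambiguously along all paths (the global form of Cartan--Ambrose--Hicks for locally homogeneous spaces), yielding a global isometry of contact metric manifolds. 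The main obstacle is precisely this last step rather than the computations: one must apply the rigidity theorem in the correct category, so that the resulting map respects the full contact metric structure and not merely $g$, and one must justify the Singer-type finiteness by genuinely using local homogeneity; verifying the universal formulas for the higher covariant derivatives is laborious but routine once $\nabla\xi,\nabla\varphi,\nabla h$ are in hand.
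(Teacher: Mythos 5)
There is nothing internal to compare against here: the paper states Theorem~\ref{thm:boecks} as a quoted result of Boeckx (\cite[Theorem~3]{Boe}) and gives no proof, so your proposal can only be measured against Boeckx's original argument --- and it does in fact reconstruct that argument's architecture faithfully: the pointwise normal form of $h$ (eigenvalues $0,\pm\sqrt{1-\kappa}$, eigenspaces swapped by $\varphi$) gives the linear isometry $F$; the universal formulas of \cite{BKP} for $R$, $\nabla\xi$, $\nabla\varphi$, $\nabla h$ give, by induction, that every $\nabla^kR$ is a fixed polynomial in the structure tensors and $(\kappa,\mu)$; and local homogeneity \cite{Boe99} plus jet-level rigidity then yields the local isometry. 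The one step that is thinner than it should be --- and you correctly flag it as the main obstacle --- is the appeal to Cartan--Ambrose--Hicks ``in the category of Riemannian manifolds carrying the extra tensor fields $\xi,\varphi,h$'': no off-the-shelf version of that theorem intertwines auxiliary tensors. Two standard ways to close it: (i) note that your induction also shows all $\nabla^k\xi$, $\nabla^k\varphi$, $\nabla^k h$ are universal, so $F$ matches the full $\infty$-jets of the structure tensors; since locally homogeneous metrics (and these structure tensors) are real-analytic, the Riemannian local isometry $f$ with $df_p=F$ satisfies $f^\ast\xi'=\xi$ etc.\ by analytic continuation from the jet agreement at $p$; or (ii) observe that on a non-Sasakian $(\kappa,\mu)$-space the vector field $\xi$ is recovered from $(g,R)$ up to sign via the nullity condition, and then $\eta$, $\varphi$, $h$ are recovered from $\xi$, $g$, $d\eta$, so any Riemannian local isometry whose differential at one point matches $\xi$ automatically respects the whole contact metric structure (this curvature-determinacy is closer to how Boeckx himself argues, via his $D$-homothetic invariant and strong local $\varphi$-symmetry). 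A minor simplification: transitivity buys you nothing --- compare $M$ and $M'$ directly. The global claim then goes through as you say: between complete, simply connected, locally homogeneous (real-analytic) spaces the local isometry continues unambiguously along paths to a global one.
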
 

This theorem enables one to classify 
connected, simply-connected, and complete non-Sasakian $(\kappa, \mu)$-spaces. 
In fact, these spaces can be divided into two classes. 
The first class consists of the unit tangent sphere bundles over spaces of constant curvature, 
mentioned in Remark~\ref{rem:BKP}. 
The second class is given in \cite{Boe} 
as Lie groups equipped with suitable contact metric structures. 

\begin{Def} \label{def:gab}
For each $\alpha, \beta \in \R$, 
we define a real $(2n+1)$-dimensional Lie algebra $\fr{g}_{\alpha, \beta}$ 
with basis $\{\xi, X_1, X_2, \ldots, X_{n}, Y_1, Y_2, \ldots, Y_{n}\}$ as follows: 
\begin{enumerate}[$(1)$]
\item 
the bracket product $[\xi , X_i]$ is given by 
   	$$\begin{aligned}[]
        [\xi, X_1] &= - (1/2) \alpha \beta X_2 - (1/2) \alpha^2 Y_1, \\
        [\xi, X_2] &= (1/2) \alpha \beta X_1 - (1/2) \alpha^2 Y_2, \\
        [\xi, X_i] &= - (1/2) \alpha^2 Y_i & (i \ne 1,2) , 
        \end{aligned}$$
\item 
the bracket product $[\xi , Y_i]$ is given by 
   	$$\begin{aligned}[]
        [\xi, Y_1] &= (1/2) \beta^2 X_1 - (1/2) \alpha \beta Y_2, \\
        [\xi, Y_2] &= (1/2) \beta^2 X_2 + (1/2) \alpha \beta Y_1, \\
        [\xi, Y_i] &= (1/2) \beta^2 X_i & (i \ne 1,2) , 
        \end{aligned}$$
\item 
the bracket product $[X_i , X_j]$ is given by 
   	$$\begin{aligned}[]
        [X_1, X_i] &= \alpha X_i & (i \ne 1) , \\
        [X_i, X_j] &= 0 & (i,j \ne 1) , 
        \end{aligned}$$
\item 
the bracket product $[Y_i , Y_j]$ is given by 
   	$$\begin{aligned}[]
        [Y_2, Y_i] &= \beta Y_i & (i \ne 2) , \\
        [Y_i, Y_j] &= 0 & (i,j \ne 2) , 
        \end{aligned}$$
\item 
the bracket product $[X_i , Y_j]$ is given by 
   	$$\begin{aligned}[]
        [X_1, Y_1] &= -\beta X_2 + 2 \xi, \\
        [X_1, Y_i] &= 0 & (i \ne 1) , \\
        [X_2, Y_1] &= \beta X_1 - \alpha Y_2, \\
        [X_2, Y_2] &= \alpha Y_1 + 2 \xi, \\
        [X_2, Y_i] &= \beta X_i & (i \ne 1,2) , \\
        [X_i, Y_1] &= -\alpha Y_i & (i \ne 1,2) , \\
        [X_i, Y_2] &= 0 & (i \ne 1,2) , \\
        [X_i, Y_j] &= \delta_{i j}(-\beta X_2+\alpha Y_1 + 2\xi) & (i,j \ne 1,2) . 
        \end{aligned}$$
\end{enumerate}
\end{Def}

Let $G_{\alpha, \beta}$ be the simply-connected Lie group with Lie algebra $\fr{g}_{\alpha, \beta}$, 
and define left-invariant structures on $G_{\alpha, \beta}$ as follows: 
\begin{itemize}
  \item the left-invariant metric $g$ is defined so that the above basis is orthonormal,
  \item the characteristic vector field is given by $\xi$,
  \item the $1$-form $\eta$ is the metric dual of $\xi$, that is, $\eta(X) = g(X, \xi)$, 
  \item the $(1,1)$-tensor field $\varphi$ is defined by
\begin{align}
	\varphi(\xi)=0, \quad \varphi(X_i) = Y_i, \quad \varphi(Y_i)=-X_i.
\end{align}
\end{itemize}
Then, Boeckx has shown the following. 

\begin{Prop}[{\cite[Section~4]{Boe}}] 
\label{prop:G-alpha-beta} 
Assume that $\beta > \alpha \ge 0$. 
Then, the contact metric manifold $(G_{\alpha,\beta}, \eta, \xi, \varphi, g)$ 
is a non-Sasakian $(\kappa,\mu)$-space, where 
\begin{align} 
\kappa = 1 - \frac{(\beta^2-\alpha^2)^2}{16}, \qquad \mu = 2 + \frac{\alpha^2+\beta^2}{2} . 
\end{align} 
\end{Prop}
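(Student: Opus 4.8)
The plan is to reduce everything to computations in the Lie algebra $\fr{g}_{\alpha,\beta}$, since all of the relevant tensors are left-invariant and hence determined by their values on the orthonormal basis $\{\xi, X_i, Y_i\}$. The statement has two halves: that $(G_{\alpha,\beta},\eta,\xi,\varphi,g)$ is a contact metric manifold, and that it is a non-Sasakian $(\kappa,\mu)$-space with the asserted constants. For the first half I would check the almost contact identities $\eta(\xi)=1$ and $\varphi^2 = -I + \eta\otimes\xi$ directly on the basis (these are immediate, since $\varphi$ interchanges $X_i \leftrightarrow Y_i$ up to sign and annihilates $\xi$), observe that the associated metric condition $g(\varphi X,\varphi Y) = g(X,Y)-\eta(X)\eta(Y)$ holds because $\varphi$ is, up to sign, an orthogonal permutation of the basis, and finally verify the contact condition $\Phi = d\eta$. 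With Blair's convention $d\eta(X,Y)=\tfrac12(X\eta(Y)-Y\eta(X)-\eta([X,Y]))$, left-invariance reduces this to $d\eta(X,Y) = -\tfrac12\, g([X,Y],\xi)$, so the check amounts to reading off the $\xi$-components of the brackets: only the terms $[X_i,Y_i] = \cdots + 2\xi$ contribute, giving $d\eta(X_i,Y_i) = -1 = g(X_i,\varphi Y_i) = \Phi(X_i,Y_i)$, with all other basis pairs vanishing on both sides.

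For the second half I would first compute $h = \tfrac12\mathcal{L}_\xi\varphi$ from $hX = \tfrac12([\xi,\varphi X] - \varphi[\xi,X])$. A short calculation, in which the $\alpha\beta$ cross-terms appearing in $[\xi,X_1],[\xi,X_2],[\xi,Y_1],[\xi,Y_2]$ cancel, shows that $h$ is diagonal, namely $hX_i = \lambda X_i$ and $hY_i = -\lambda Y_i$ with $\lambda := (\beta^2-\alpha^2)/4$. Since $\beta > \alpha \ge 0$ forces $\lambda > 0$, the operator $h$ is nonzero, so the space will be non-Sasakian; moreover the eigenvalues $\pm\lambda$ are exactly what the relation $h^2 = (1-\kappa)(I-\eta\otimes\xi)$ predicts for $\kappa = 1 - \lambda^2 = 1 - (\beta^2-\alpha^2)^2/16$, which pins down the candidate value of $\kappa$.

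Next I would compute the curvature. The decisive shortcut is the standard contact-metric identity $\nabla_X\xi = -\varphi X - \varphi hX$ (valid on any contact metric manifold, see \cite{B_lec}), which together with the explicit $h$ gives $\nabla_{X_i}\xi = -(1+\lambda)Y_i$, $\nabla_{Y_i}\xi = (1-\lambda)X_i$, and $\nabla_\xi\xi = 0$. Feeding these into $R(X,\xi)\xi = -\nabla_\xi\nabla_X\xi - \nabla_{[X,\xi]}\xi$ reduces the Jacobi operator to knowing $\nabla_\xi X_i$ and $\nabla_\xi Y_i$, which I would obtain from the Koszul formula $2g(\nabla_U V,W) = g([U,V],W) - g([V,W],U) + g([W,U],V)$. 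One finds, for instance, $\nabla_\xi Y_i = (\mu/2)X_i$ for $i\ne 1,2$ with $\mu := 2 + (\alpha^2+\beta^2)/2$, and assembling everything yields $R(X_i,\xi)\xi = (\kappa+\mu\lambda)X_i$ and $R(Y_i,\xi)\xi = (\kappa-\mu\lambda)Y_i$; since $hX_i=\lambda X_i$ and $hY_i=-\lambda Y_i$, this is precisely $(\kappa I + \mu h)X$ on the contact distribution, as required, and the matching of these eigenvalues forces the value of $\mu$.

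The main obstacle is the remaining curvature bookkeeping: the $(\kappa,\mu)$ identity also demands $R(X,Y)\xi = 0$ whenever $X,Y\perp\xi$, and verifying this for every basis pair $(X_i,X_j)$, $(X_i,Y_j)$, $(Y_i,Y_j)$ is where the bulk of the labor lies. The difficulty is caused entirely by the exceptional indices $1,2$, whose brackets carry the $\alpha\beta$ cross-terms and the extra summands $-\beta X_2$ and $\alpha Y_1$; one must check that these contributions cancel so that both the Jacobi operator and $h$ come out diagonal with the same eigenspaces. Left-invariance keeps each individual computation finite, and the diagonal form of $h$ together with $\nabla_X\xi = -\varphi X - \varphi hX$ organizes the work, but the verification that the off-diagonal terms vanish is the genuinely delicate point.
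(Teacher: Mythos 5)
Your computations check out: $hX_i=\lambda X_i$ and $hY_i=-\lambda Y_i$ with $\lambda=(\beta^2-\alpha^2)/4$ after the $\alpha\beta$ cross-terms cancel, and the Jacobi eigenvalues you obtain are consistent with the stated constants --- for instance $\tfrac{\mu}{2}(1-\lambda)-\tfrac{\beta^2}{2}(1+\lambda)=\kappa-\mu\lambda$ precisely because $(\mu-\beta^2)/2=1-\lambda$, and similarly $(\mu-\alpha^2)/2=1+\lambda$ on the $X$-side --- so your brute-force plan does produce $\kappa=1-(\beta^2-\alpha^2)^2/16$ and $\mu=2+(\alpha^2+\beta^2)/2$. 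The comparison with the paper is short, however: the paper contains no proof of this proposition at all; it is imported wholesale from Boeckx \cite{Boe}, where the logic runs in the opposite direction. Boeckx starts from the structure equations of an abstract non-Sasakian $(\kappa,\mu)$-space (the covariant-derivative formulas for an $h$-eigenframe, available from \cite{BKP}) and shows that a suitably normalized frame has constant structure coefficients, i.e.\ closes up into the Lie algebra of Definition~\ref{def:gab}; the $(\kappa,\mu)$ property is thus built in by construction and never verified a posteriori. Your route takes the brackets as given and verifies the nullity condition by hand, which buys a self-contained argument independent of the classification machinery, at the cost of exactly the bookkeeping you identify. Two things you get right that are worth underlining: the use of Blair's identity $\nabla_X\xi=-\varphi X-\varphi hX$ to bypass most Koszul computations, and the recognition that checking only the characteristic Jacobi operator $R(\cdot,\xi)\xi=\kappa I+\mu h$ is strictly weaker than the full nullity condition, so the pairs $R(X,Y)\xi$ with $X,Y\perp\xi$ (in particular those involving the exceptional indices $1,2$, where the $-\beta X_2$ and $\alpha Y_1$ summands enter) must still be shown to vanish. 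Your proposal leaves that block as a plan rather than carrying it out; it is routine but genuinely long, and it is the only thing separating your sketch from a complete proof.
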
 

In this paper, we are interested in $(0,4)$-spaces. 
Note that $(\alpha, \beta) = (0,2)$ implies $(\kappa, \mu) = (0,4)$. 
Therefore, 
it then follows from Theorem~\ref{thm:boecks} that 
a connected, simply-connected, and complete $(0,4)$-space is isomorphic to $G_{0,2}$. 
This Lie group $G_{0,2}$ will be studied in the following sections. 

\section{Notes on Ricci soliton solvmanifolds} 
\label{sec:RS} 

This section gives some preliminaries for left-invariant 
Einstein or Ricci soliton metrics on solvable Lie groups. 
The results of this section will be used 
to show that the $(0,4)$-spaces $G_{0,2}$ are nontrivial Ricci soliton. 

\subsection{Einstein solvmanifolds} 

A metric Lie algebra $(\LG , \inner{}{})$ is said to be \textit{Einstein} 
if the Ricci operator satisfies $\Ric = c I$ for some $c \in \R$. 
Note that a metric Lie algebra is Einstein if and only if 
the induced left-invariant metric on a corresponding Lie group is Einstein. 
A solvable Lie group equipped with a left-invariant Einstein metric is called an Einstein solvmanifold. 
In order to recall a general theory of Einstein solvmanifolds, 
we need the following notion. 

\begin{Def} 
A metric solvable Lie algebra $(\LS, \langle , \rangle)$ is said to be of \textit{Iwasawa type} if 
\begin{enumerate} 
\item 
$\LA := \LS \ominus [\LS, \LS]$ is abelian, 
\item 
$\ad_A$ ($A \in \LA$) is symmetric with respect to $\langle, \rangle$, and $\ad_A \neq 0$ if $A \neq 0$, 
\item 
there exists $A_0 \in \LA$ such that $\ad_{A_0} |_{[\LS, \LS]}$ is positive definite. 
\end{enumerate} 
\end{Def} 

Note that ${\mathfrak u} \ominus {\mathfrak u}^\prime$ 
denotes the orthogonal complement of ${\mathfrak u}^\prime$ in ${\mathfrak u}$. 
In order to study the Einstein condition on a metric Lie algebra, 
the following particular vector plays an important role. 

\begin{Def} 
For a metric Lie algebra $(\LG, \langle, \rangle)$, 
a vector $H_0 \in \LG$ is called the \textit{mean curvature vector} if it satisfies 
\begin{align*} 
\langle H_0 , X \rangle = \tr (\ad_X) \quad (\forall X \in \LG) . 
\end{align*} 
\end{Def} 

Heber (\cite{Heb}) deeply studied Einstein solvmanifolds, 
and obtained many fundamental results. 
Among others, he obtained the following reduction procedure, called the rank reduction. 

\begin{Thm}[{\cite[Theorem~4.18]{Heb}}] 
\label{thm:heber} 
Let $(\LS, \langle , \rangle)$ be an Einstein solvable Lie algebra of Iwasawa type, 
and $H_0$ be the mean curvature vector of $(\LS, \langle , \rangle)$. 
We put $\LN := [\LS, \LS]$, $\LA := \LS \ominus \LN$, 
and take a nonzero subspace $\LA^\prime \subset \LA$. 
Then $\LS^\prime := \LA^\prime \oplus \LN$ is a subalgebra. 
Furthermore, $(\LS^\prime , \langle , \rangle |_{\LS^\prime \times \LS^\prime})$ 
is Einstein if and only if $H_0 \in \LA^\prime$. 
\end{Thm}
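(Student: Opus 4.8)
The plan is to compare the Ricci operators of $(\LS, \inner{}{})$ and of $(\LS', \inner{}{}|_{\LS' \times \LS'})$ term by term, using the standard formula for the Ricci operator of a metric Lie algebra, and to isolate the only place where the choice of $\LA'$ matters. Write $\LA = \LA' \oplus \LA''$ for the orthogonal decomposition and, for $A \in \LA$, set $D_A := \ad_A |_\LN$; by the Iwasawa-type hypothesis each $D_A$ is a symmetric operator on $(\LN, \inner{}{}|_\LN)$, and $D_A = 0$ forces $A = 0$. Throughout I would use the identity $\Ric = \mathsf{M} - (1/2) B - S(\ad_{H})$, where $\inner{B X}{Y} = \tr(\ad_X \ad_Y)$, $S(\ad_H)$ is the symmetric part of $\ad_H$ for the relevant mean curvature vector $H$, and $\mathsf{M}$ is the purely bracket-theoretic term.

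First I would dispose of the elementary points. Since $\LN = [\LS,\LS]$ is an ideal and $\LA$ is abelian, $[\LS',\LS'] \subseteq \LN \subseteq \LS'$, so $\LS'$ is a subalgebra. Because $\tr \ad_{[Y,Z]} = 0$, mean curvature vectors are orthogonal to $\LN$; evaluating $\inner{H_0'}{A} = \tr (D_A) = \inner{H_0}{A}$ for $A \in \LA'$ identifies the mean curvature vector of $\LS'$ as the orthogonal projection $H_0' = P_{\LA'} H_0$. Hence $H_0 \in \LA'$ if and only if $H_0' = H_0$, and in general $H_0 - H_0' = P_{\LA''} H_0$.

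The core is to see how each block of $\Ric$ changes when the summation basis is cut down from $\LS$ to $\LS'$, that is, when the $\LA''$-directions are deleted. On the $\LA'$--$\LA'$ entries the term $S(\ad_H)$ vanishes, while the $\mathsf{M}$-term and the $B$-term each contribute $-(1/2)\tr(D_A D_{A'})$; thus $\inner{\Ric_{\LS'} A}{A'} = -\tr(D_A D_{A'}) = \inner{\Ric_{\LS} A}{A'}$ for $A, A' \in \LA'$. On the $\LA'$--$\LN$ off-diagonal, the extra $\LA''$-terms of $\mathsf{M}$ and of $B$ vanish (using $[\LA',\LA''] = 0$ and the fact that every bracket lies in $\LN \perp \LA'$), and $S(\ad_H)$ is block-diagonal, so this off-diagonal block is also the same for $\LS$ and $\LS'$. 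The decisive case is the $\LN$-block: the two $\LA''$-contributions to $\mathsf{M}|_\LN$ are $-(1/2)\sum_a \inner{D_a^2 X}{Y}$ and $+(1/2)\sum_a \inner{D_a^2 X}{Y}$ over an orthonormal basis $\{a\}$ of $\LA''$, so they cancel, and $B|_\LN$ is unchanged because the $\LA''$-diagonal entries of $\ad_X \ad_Y$ vanish. Hence the sole difference on $\LN$ comes from the mean-curvature term, and
\[
\Ric_{\LS}|_\LN - \Ric_{\LS'}|_\LN = -\bigl(D_{H_0} - D_{H_0'}\bigr) = -D_{P_{\LA''} H_0}.
\]

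The theorem then drops out. If $H_0 \in \LA'$ and $\Ric_{\LS} = c\,\id$, then $P_{\LA''}H_0 = 0$, so all three blocks of $\Ric_{\LS'}$ coincide with those of $c\,\id$; therefore $\Ric_{\LS'} = c\,\id$ and $\LS'$ is Einstein. Conversely, if $\Ric_{\LS'} = c'\,\id$, comparing $\LA'$-blocks gives $c' = c$ (here $\LA' \neq 0$ is used), and comparing $\LN$-blocks yields $D_{P_{\LA''}H_0} = 0$; the Iwasawa-type nondegeneracy then forces $P_{\LA''}H_0 = 0$, i.e.\ $H_0 \in \LA'$. I expect the main obstacle to be precisely the bookkeeping in the $\LN$-block: one must carefully verify both that the two $\mathsf{M}$-contributions cancel and that $B|_\LN$ is independent of the ambient $\LA$, since this is exactly what concentrates the whole dependence of the nilpotent block on $\LA'$ into the single mean-curvature term.
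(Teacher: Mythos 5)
Your proposal is correct, but note that the paper itself contains no proof of this statement to compare against: it is quoted verbatim from Heber (\cite[Theorem~4.18]{Heb}) and used as a black box (in the proof of Proposition~\ref{prop:5-6}). So the fair comparison is with the paper's proof of the analogous solvsoliton statement, Proposition~\ref{prop:solvsoliton}, and there the routes genuinely differ: the paper avoids all curvature computations by invoking Lauret's structural characterization (Theorem~\ref{thm:lauret}), checking that conditions (1)--(4) pass to $\LS' = \LA' \oplus \LN$, whereas you work directly with the Ricci operator via $\Ric = \mathsf{M} - \tfrac{1}{2}B - S(\ad_H)$ and localize the entire dependence on $\LA'$ in the single term $-D_{P_{\LA''}H_0}$ on the $\LN$-block. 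I checked the computations that carry the load: the identification $H_0' = P_{\LA'}H_0$; the vanishing of $S(\ad_{H_0})$ on $\LA$ (both $\ad_{H_0}A$ and $\ad_{H_0}^t A$ vanish since $[\LS,\LS] = \LN \perp \LA$); the cancellation of the two $\LA''$-contributions $\mp\tfrac{1}{2}\sum_a \langle D_a^2 X, Y\rangle$ to $\mathsf{M}|_\LN$, which uses the symmetry (in fact only normality) of $D_a$ from the Iwasawa-type hypothesis; the invariance of the Killing-form term because $\ad_X\ad_Y$ has vanishing $\LA''$-diagonal; and the converse, where condition (2) of Iwasawa type ($\ad_A \neq 0$ for $A \neq 0$, and $\ad_A = D_A$ up to the zero action on $\LA$) converts $D_{P_{\LA''}H_0} = 0$ into $H_0 \in \LA'$. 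All of these are sound. One micro-patch: your one-line justification that mean curvature vectors are orthogonal to $\LN$ literally shows $H_0' \perp [\LS',\LS']$, which may be a proper subspace of $\LN$; to get $H_0' \in \LA'$ you should instead note that for $X \in \LN$ the operator $\ad'_X$ is nilpotent ($\LN$ is a nilpotent ideal of $\LS'$, hence inside its nilradical), so $\tr \ad'_X = 0$. Your approach buys two things the paper's citation does not make visible: it never uses condition (3) of the Iwasawa-type definition (existence of $A_0$ with $\ad_{A_0}|_{[\LS,\LS]}$ positive definite), so it proves a marginally more general statement, and essentially the same block computation, with $c\,\id$ replaced by $c\,\id + D$, would also reprove Proposition~\ref{prop:solvsoliton} without Lauret's theorem; the paper's route, conversely, is shorter because the hard analysis is outsourced to Heber and Lauret.
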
 

This gives a simple and practical criterion, 
for a particular type of subgroups of an Einstein solvmanifold of Iwasawa type, 
to be Einstein. 

\subsection{Ricci soliton solvmanifolds} 

A solvable Lie group equipped with a left-invariant Ricci soliton metric is called a 
Ricci soliton solvmanifold. 
The following notion plays a key role in the study of left-invariant Ricci soliton metrics. 

\begin{Def} 
A metric Lie algebra $(\LG , \inner{}{})$ is called an 
\textit{algebraic Ricci soliton with constant $c \in \R$} 
if there exists a derivation $D \in \Der(\LG)$ such that 
\begin{align*} 
\Ric = c I + D . 
\end{align*} 
\end{Def} 

Note that any algebraic Ricci soliton gives rise to a Ricci soliton metric 
on the corresponding simply-connected Lie group $G$ (see \cite{L01, L11}). 
In fact, one can express the potential vector field $X \in \mathfrak{X}(G)$ explicitly by 
\begin{align} 
\textstyle 
X_p := \frac{d}{dt} \varphi_t(p) |_{t=0} \quad (p \in G) , 
\end{align} 
where $\varphi_t \in \Aut(G)$ is defined by $(d \varphi_t)_e = e^{- t D} \in \Aut(\LG)$. 
The obtained left-invariant Ricci soliton metrics are also called 
algebraic Ricci solitons. 

An algebraic Ricci soliton is called a \textit{solvsoliton} if $\LG$ is solvable, 
and a \textit{nilsoliton} if $\LG$ is nilpotent. 
A solvsoliton is said to be \textit{nontrivial} if it is not Einstein. 
For nontrivial solvsolitons, 
the following properties have been known. 

\begin{Prop}[\cite{L11}] 
\label{prop:L11} 
Let $(\LS, \langle, \rangle)$ be a nontrivial solvsoliton. 
Then the corresponding left-invariant metric on the simply-connected Lie group 
is nongradient and expanding Ricci soliton. 
\end{Prop}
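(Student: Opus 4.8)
The plan is to prove the two assertions---that the soliton is expanding ($c < 0$) and that it is nongradient---separately, taking as our starting point the defining identity $\Ric = cI + D$ with $D \in \Der(\LS)$, where the nontriviality hypothesis (the metric is not Einstein) guarantees $D \neq 0$.

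For the expanding assertion, I would first take the trace of $\Ric = cI + D$, obtaining $s = c \dim \LS + \tr D$, where $s = \tr \Ric$ is the (constant) scalar curvature of the associated left-invariant metric. The sign of $c$ is then forced once the two terms on the right are controlled. Here I would use the structure theory of solvsolitons: writing $\LN := [\LS, \LS]$ and $\LA := \LS \ominus \LN$, the symmetric derivation $D$ may be normalized to be positive definite on $\LN$ and to vanish on $\LA$, so that $\tr D > 0$; on the other hand, the curvature formulas for metric solvable Lie algebras (expressed through the bracket and the mean curvature vector) give $s \le 0$. Combining these, $c \dim \LS = s - \tr D < 0$, whence $c < 0$ and the soliton is expanding.

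For the nongradient assertion I would argue by contradiction. Suppose the induced left-invariant metric were a gradient Ricci soliton. By the rigidity of homogeneous gradient Ricci solitons (see \cite{PW}), its simply-connected model then splits isometrically as a Riemannian product $N \times \R^{k}$ of an Einstein manifold $N$ with a Euclidean (Gaussian) factor; since the metric is nontrivial, i.e.\ not Einstein, one must have $k \ge 1$, so the metric carries a nonzero Euclidean de Rham factor. On such a product the Ricci operator is $c$ on the $N$-directions and $0$ on the flat factor, which would force the soliton derivation $D = \Ric - cI$ to vanish on the nilradical coming from $N$ and to be positive on the flat factor. This contradicts the normalization above, in which $D$ is positive definite on all of $\LN$; more conceptually, the solvsoliton under consideration admits no flat direct de Rham factor, so the product $N \times \R^{k}$ cannot occur, contradicting the gradient hypothesis.

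The main obstacle is precisely this nongradient step, and in particular the fact that one must rule out \emph{every} gradient potential, not merely the canonical one arising from the automorphism flow $\varphi_t$ with $(d\varphi_t)_e = e^{-tD}$. Because two potentials of the same soliton differ by a Killing field, the gradient hypothesis concerns an entire affine family of vector fields, and disposing of it requires either the homogeneous-gradient rigidity theorem invoked above or a direct integrability analysis: if $\nabla f$ is a potential then $\nabla_Y \nabla f = -DY$, and substituting this into the Ricci and curvature identities produces algebraic constraints on $D$ which, for the solvsolitons in question, are incompatible with $D \neq 0$. By comparison, the expanding assertion is a routine trace computation once the signs of $\tr D$ and $s$ are secured.
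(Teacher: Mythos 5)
The paper offers no proof of this Proposition---it is imported verbatim from Lauret \cite{L11}---so your attempt can only be judged on its own merits against what is actually true, not against an argument in the text.

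For the expanding half, your skeleton (trace the soliton equation, use $s\le 0$, show $\tr D>0$) is a reasonable outline, and $s\le 0$ for solvable metric Lie algebras is indeed a classical theorem of Milnor. But the way you secure $\tr D>0$ fails. There is no ``normalization'' available: you may assume $\LS$ nonabelian (abelian solvsolitons are flat, hence trivial), and then $I\notin\Der(\LS)$, so the pair $(c,D)$ in $\Ric=cI+D$ is \emph{uniquely} determined by the metric; the properties of $D$ must be proved, not arranged. Extracting them from Theorem~\ref{thm:lauret} would moreover be circular, since that theorem is formulated under the standing hypothesis $c<0$. Worse, positive definiteness of $D$ on the nilradical is simply false: take $\LS=\Span\{A,X,Z\}$ orthonormal with the single relation $[A,X]=X$ and $Z$ central. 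The simply-connected group is the product $\RH^2\times\R$, with Ricci operator $\mathrm{diag}(-1,-1,0)$; then $D:=\Ric+I=\mathrm{diag}(0,0,1)$ is a derivation, so this is a nontrivial solvsoliton with $c=-1$, yet on the nilradical $\LN=\Span\{X,Z\}$ one has $D|_{\LN}=\mathrm{diag}(0,1)$, only semidefinite. (Do not confuse $D|_{\LN}$ with the nilsoliton derivation of $(\LN,\langle,\rangle|_{\LN\times\LN})$; they differ because $\Ric_{\LS}|_{\LN}\neq\Ric_{\LN}$.) The conclusion $c<0$ is true, but the standard route to it goes through trace identities---on a nilpotent metric Lie algebra $\tr(\Ric D)=0$ for every symmetric derivation, whence $c=\tr(\Ric^2)/s<0$ there---combined with the reduction to the nilradical, not through positivity of $D$.

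The nongradient half contains the fatal gap, and at the spot you yourself flagged as the main obstacle. The same example defeats your key claim: $\RH^2\times\R$ is a nontrivial solvsoliton in the paper's sense (non-Einstein, $\Ric=cI+D$ as above) which \emph{is} a gradient expanding Ricci soliton---it is rigid in the sense of \cite{PW}, with potential a suitable multiple of $t^2$ in the flat coordinate. So ``the solvsoliton under consideration admits no flat direct de Rham factor'' neither follows from nontriviality nor holds in general, and your contradiction never materializes. What \cite{PW} actually yields is an equivalence: a solvsoliton is gradient if and only if it is rigid, i.e.\ splits isometrically as an Einstein solvmanifold times a Euclidean factor. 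Non-gradience therefore requires verifying the absence of such a factor, an extra step beyond ``not Einstein''; read literally with the paper's definition of nontrivial, even the quoted Proposition needs this caveat, and for the paper's application to $\fr{s}_N$ the verification is feasible (rigidity would force the Ricci spectrum to lie in $\{c,0\}$ with parallel kernel). Your observation that one must exclude \emph{every} potential, not merely the canonical one generated by $e^{-tD}$, is correct and is precisely why \cite{PW} is the right tool---but the blanket no-flat-factor assertion on which you rest the contradiction is the one thing that cannot be granted.
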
 

Recall that a Ricci soliton is said to be 
\textit{expanding}, \textit{steady}, or \textit{shrinking}, 
according as $c<0$, $c=0$, or $c>0$, respectively. 
A Ricci soliton is said to be \textit{gradient} 
if the vector field $X$ is the gradient $\nabla f$ of a smooth function $f$. 

Many structural results of solvsolitons have been obtained by Lauret (\cite{L11}). 
Among others, solvsolitons can be characterized in terms of the nilradicals. 

\begin{Thm}[{\cite[Theorem~4.8]{L11}}] 
\label{thm:lauret} 
Let $(\LS , \inner{}{})$ be a solvable metric Lie algebra, 
and $\LN$ be the nilradical of $\LS$. 
We put $\LA := \LS \ominus \LN$, and take $c<0$. 
Then, $(\LS , \inner{}{})$ is a solvsoliton with constant $c$ if and only if 
the following conditions hold$:$ 
\begin{enumerate}
\item[$(1)$] 
$(\LN , \inner{}{}|_{\LN \times \LN})$ is a nilsoliton with constant $c$, 
\item[$(2)$] 
$[\LA , \LA] = 0$, 
\item[$(3)$] 
$[\ad_A , (\ad_A)^t] = 0$ for all $A \in \LA$, 
\item[$(4)$] 
$\inner{A}{A} = - (1/c) \tr S(\ad_A)^2$ for all $A \in \LA$. 
\end{enumerate}
\end{Thm}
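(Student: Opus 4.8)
The plan is to work from the explicit formula for the Ricci operator of a left-invariant metric, written as $\Ric = M - \tfrac{1}{2}B - S(\ad_{H_0})$, where $H_0$ is the mean curvature vector, $B$ is the Killing-form operator $\inner{BX}{Y}=\tr(\ad_X\ad_Y)$, $S(T):=(T+T^t)/2$ denotes the symmetric part, and $M$ is the term quadratic in the structure constants, and to analyze this operator relative to the orthogonal splitting $\LS=\LA\oplus\LN$. Since $\LN$ is the nilradical, each $\ad_X$ with $X\in\LN$ is nilpotent and hence traceless, so the mean curvature vector lies in $\LA$. The central computation is to compare the $\LN\times\LN$ block of $\Ric$ with the intrinsic Ricci operator $\Ric_\LN$ of the nilradical with its induced metric; because a nilpotent Lie algebra has vanishing Killing form and mean curvature vector, $\Ric_\LN$ is just the corresponding $M$-term, and the two operators differ only by a correction $C$ assembled from the $S(\ad_A)$ and from $\ad_{H_0}|_\LN$.

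For the ``if'' direction I would set $D:=\Ric-cI$ and show $D\in\Der(\LS)$. Polarizing condition $(4)$ gives $\inner{\Ric A}{A'}=-\tr(S(\ad_A)S(\ad_{A'}))=c\inner{A}{A'}$ for all $A,A'\in\LA$, so $D$ annihilates $\LA$; conditions $(2)$ and $(3)$ are then used to show that $\Ric$ preserves the splitting and that the correction $C$ is controlled. On $\LN$, condition $(1)$ writes $\Ric_\LN=cI+D_\LN$ with $D_\LN\in\Der(\LN)$, whence $D|_\LN=D_\LN+C$. Checking the Leibniz rule on the three types of bracket pairs $(\LA,\LA)$, $(\LA,\LN)$, $(\LN,\LN)$ then reduces, using $D|_\LA=0$ and $[\LA,\LA]=0$, to the two statements that $D|_\LN$ is a derivation of $\LN$ and that it commutes with each $\ad_A|_\LN$; both should follow from $(1)$, $(3)$, and the explicit shape of $C$.

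For the ``only if'' direction I would start from $\Ric=cI+D$ with $D\in\Der(\LS)$. Here $D=\Ric-cI$ is automatically symmetric, and since the nilradical is a characteristic ideal, $D$ preserves $\LN$, so $D|_\LN\in\Der(\LN)$ at once. Evaluating the soliton equation on $\LA$ and comparing with $\inner{\Ric A}{A'}=-\tr(S(\ad_A)S(\ad_{A'}))$, together with the constraint that $D$ be a derivation across $[\LA,\LN]$, should simultaneously force $[\LA,\LA]=0$, the normality $[\ad_A,(\ad_A)^t]=0$, and the normalization $(4)$. Restricting the soliton equation to $\LN$ and subtracting the correction $C$ identified in the first step would then recover $\Ric_\LN=cI+(\text{derivation of }\LN)$, i.e.\ condition $(1)$.

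I expect the main obstacle to be the precise block comparison between $\Ric$ and $\Ric_\LN$. The correction $C$ carries cross terms coming from the skew parts of the $\ad_A$, and showing that $\Ric$ preserves the orthogonal splitting and that $D_\LN+C$ is simultaneously a derivation of $\LN$ and a commutant of the $\ad_A|_\LN$ is exactly where the normality condition $(3)$ must be invoked and where the bookkeeping is delicate. Conceptually, the passage from ``the $\LN$-block satisfies a soliton-type identity'' to ``$\LN$ is genuinely a nilsoliton'' relies on the structure theory of nilsolitons---the uniqueness of the nilsoliton metric within a nilpotent Lie algebra and the special properties of its canonical symmetric derivation---which I would import as the essential nontrivial input rather than reprove.
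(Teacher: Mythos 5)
First, a point of reference: the paper does not prove this statement at all --- it is imported verbatim from Lauret (cited as \cite[Theorem~4.8]{L11}) and used as a black box, so there is no internal proof to compare against; the comparison has to be with Lauret's own argument. Measured against that, your framework is correct: the formula $\Ric = M - \tfrac{1}{2}B - S(\ad_{H_0})$, the observation that $H_0 \in \LA$, the automatic symmetry of $D = \Ric - cI$, and the fact that $D$ preserves the nilradical are all right, and the ``if'' direction is essentially Lauret's construction and can be completed along the lines you sketch. One elementary but crucial fact you omit, and which you will need for the converse: \emph{every derivation of a solvable Lie algebra takes values in the nilradical}. Combined with symmetry of $D$ and $D(\LN) \subset \LN$, this gives $D(\LA) \subset \LA \cap \LN = 0$ at once; and since the identity $\inner{\Ric A}{A'} = -\tr\bigl(S(\ad_A)\,S(\ad_{A'})\bigr)$ holds \emph{unconditionally} on $\LA \times \LA$ (using only $[\LS,\LS] \subset \LN$, the vanishing of the Killing form on $\LN \times \LS$, and $H_0 \in \LA$), condition $(4)$ drops out immediately --- no interplay with the Leibniz constraint across $[\LA,\LN]$ is required, contrary to what your sketch suggests.

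The genuine gap is in the ``only if'' direction for conditions $(2)$ and $(3)$. You write that the soliton equation on $\LA$ together with the derivation constraint ``should simultaneously force'' $[\LA,\LA]=0$ and $[\ad_A,(\ad_A)^t]=0$, but you supply no mechanism, and no amount of the block bookkeeping you describe produces these: they are precisely the hard core of Lauret's theorem. Concretely, the $\LN$-block of $\Ric$ is $\Ric_\LN + \tfrac{1}{2}\sum_k [\ad_{A_k}|_\LN, (\ad_{A_k}|_\LN)^t] - S(\ad_{H_0})|_\LN$ for an orthonormal basis $\{A_k\}$ of $\LA$; before $(3)$ is established you cannot identify your correction $C$ with anything derivation-like, and $S(\ad_{H_0})|_\LN$ is not a priori a derivation of $\LN$ either, so condition $(1)$ does not follow by ``subtracting $C$.'' Lauret kills the commutator terms and extracts $(1)$--$(3)$ via moment-map/real-GIT identities --- e.g.\ $\tr(\Ric_\LN E) = 0$ for every $E \in \Der(\LN)$, and the fact that the isotropy algebra at a critical point of the moment-map norm is closed under transposition --- combined with trace inequalities in which equality is forced, the same circle of ideas as in the standardness theorem for Einstein solvmanifolds. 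Relatedly, the ``essential nontrivial input'' you propose to import (uniqueness of the nilsoliton metric and its canonical derivation) is not the input actually needed; without the variational identities above, your converse direction is a restatement of the goal rather than a proof outline.
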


Note that $(\ad_A)^t$ denotes the transpose of $\ad_A$ with respect to $\inner{}{}$, 
which is defined by 
\begin{align} 
\inner{(\ad_A)^t (X)}{Y} = \inner{X}{\ad_A(Y)} \quad (\forall X, Y \in \LS) . 
\end{align} 
We also note that $S (\ad_A)$ denotes the symmetrizer of $\ad_A$, that is, 
\begin{align} 
S (\ad_A) := (1/2) \left( \ad_A + (\ad_A)^t \right) . 
\end{align} 

As a direct corollary of Theorem~\ref{thm:lauret}, 
we have the following ``rank reduction'' of solvsolitons. 
In this case, one can take any subspace $\LA^\prime$ of $\LA$. 

\begin{Prop} 
\label{prop:solvsoliton} 
Let $(\LS , \inner{}{})$ be a solvsoliton with constant $c<0$, 
$\LN$ be the nilradical of $\LS$, and $\LA := \LS \ominus \LN$. 
Take any subspace $\LA'$ of $\LA$, and put $\LS' := \LA' \oplus \LN$. 
Then, $\LS'$ is a subalgebra, and 
$(\LS' , \inner{}{}|_{\LS' \times \LS'})$ is a solvsoliton with constant $c$. 
\end{Prop}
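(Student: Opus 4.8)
The plan is to reduce everything to the characterization of solvsolitons in Theorem~\ref{thm:lauret}: I will check that the restricted metric Lie algebra $(\LS', \inner{}{}|_{\LS' \times \LS'})$ satisfies conditions $(1)$--$(4)$ for the constant $c$, and then invoke the ``if'' direction of that theorem. Since $(\LS, \inner{}{})$ is a solvsoliton with $c < 0$, the ``only if'' direction of Theorem~\ref{thm:lauret} tells me that conditions $(1)$--$(4)$ already hold for $\LN$ and $\LA = \LS \ominus \LN$, and these are the facts I will exploit.

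First I would verify that $\LS'$ is a subalgebra. As $\LN$ is the nilradical of $\LS$ it is an ideal, so $[\LA', \LN] \subseteq \LN$ and $[\LN, \LN] \subseteq \LN$; together with $[\LA', \LA'] \subseteq [\LA, \LA] = 0$ from condition $(2)$, this gives $[\LS', \LS'] \subseteq \LN \subseteq \LS'$.

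The step I expect to be the \emph{main obstacle} is identifying the nilradical of $\LS'$: I must show it equals $\LN$, so that $\LS' = \LA' \oplus \LN$ is really the decomposition demanded by Theorem~\ref{thm:lauret}. Write $\LN'$ for the nilradical of $\LS'$. Since $\LN$ is a nilpotent ideal of $\LS'$ one has $\LN \subseteq \LN'$, and writing an arbitrary element of $\LN'$ as $A + N$ with $A \in \LA'$ and $N \in \LN$ shows $\LN' = (\LN' \cap \LA') \oplus \LN$; so it suffices to prove $\LN' \cap \LA' = 0$. Here I would use the standard fact that every element of a nilpotent ideal acts nilpotently under $\ad$ (iterating $\ad_A(\LS') \subseteq \LN'$ pushes the powers $\ad_A^k$ into the lower central series of $\LN'$, which vanishes). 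Thus for $A \in \LN' \cap \LA'$ the operator $\ad_A$ is nilpotent on $\LS'$; restricting to the invariant subspace $\LN$ and using that $\ad_A$ annihilates $\LA$ by condition $(2)$, it follows that $\ad_A$ is nilpotent on all of $\LS$. But condition $(3)$ makes $\ad_A$ normal on $\LS$, and a normal nilpotent operator is zero; condition $(4)$ then forces $\inner{A}{A} = -(1/c)\tr S(\ad_A)^2 = 0$, so $A = 0$. This proves $\LN' = \LN$, and consequently $\LS' \ominus \LN' = \LA'$.

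It remains to transport conditions $(1)$--$(4)$ from $\LS$ to $\LS'$. The mechanism making this routine is that for $A \in \LA'$ the operator $\ad_A$ annihilates $\LA$, hence preserves the orthogonal splitting $\LS = \LS' \oplus (\LS \ominus \LS')$ and is zero on $\LS \ominus \LS' = \LA \ominus \LA'$. Consequently the transpose and symmetrizer computed inside $\LS'$ coincide with the restrictions of those computed inside $\LS$, and $S(\ad_A)$ is block-diagonal with a vanishing block on $\LS \ominus \LS'$, so $\tr S(\ad_A|_{\LS'})^2 = \tr S(\ad_A)^2$. Condition $(1)$ is the identical statement about $(\LN, \inner{}{}|_{\LN \times \LN})$; condition $(2)$ is immediate from $\LA' \subseteq \LA$; and conditions $(3)$ and $(4)$ for $\LS'$ follow from those for $\LS$ by the remark just made. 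Applying the ``if'' direction of Theorem~\ref{thm:lauret} then yields that $(\LS', \inner{}{}|_{\LS' \times \LS'})$ is a solvsoliton with the same constant $c$.
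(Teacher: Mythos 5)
Your proof is correct and follows essentially the same route as the paper's: both verify Conditions~(1)--(4) of Theorem~\ref{thm:lauret} for $(\LS' , \inner{}{}|_{\LS' \times \LS'})$, using that $\ad_A$ with $A \in \LA'$ vanishes on $\LS \ominus \LS' \subset \LA$, so that transposes, symmetrizers, and traces restrict correctly. The one place you go beyond the paper is your careful identification of the nilradical of $\LS'$ with $\LN$ (which the paper merely asserts); your normal-nilpotent argument is valid, though it can be shortened, since Condition~(2) already makes the nilradical $\LN'$ of $\LS'$ a nilpotent ideal of all of $\LS$ (indeed $[\LS , \LN'] \subset [\LA , \LA'] + [\LA , \LN] + [\LN , \LS'] \subset \LN \subset \LN'$), whence $\LN' \subset \LN$ directly.
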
 

\begin{proof} 
It is clear that $\LS'$ is a subalgebra of $\LS$. 
We show the second assertion. 
Since $(\LS , \inner{}{})$ is a solvsoliton, 
it satisfies Conditions~(1)--(4) in Theorem~\ref{thm:lauret}. 
We show that $(\LS' , \inner{}{}|_{\LS' \times \LS'})$ also satisfies these four conditions. 

First of all, it satisfies Condition~(1), since $\LN$ is the nilradical of $\LS'$. 
It also satisfies Condition~(2), since $[\LA' , \LA'] \subset [\LA , \LA] = 0$. 
We here denote by $\ad'$ the adjoint operator of $\LS'$. 
Since $\LS'$ is a subalgebra of $\LS$, one can show that 
\begin{align} 
\label{eq:lauret} 
\ad'_X = \ad_X |_{\LS'} , \ (\ad'_X)^t = (\ad_X)^t |_{\LS'} \quad (\forall X \in \LS') . 
\end{align} 
Take any $A \in \LA'$. 
Then, since $\ad_A$ and $(\ad_A)^t$ commute by assumption, we have 
\begin{align} 
[ \ad'_A , (\ad'_A)^t ] = [ \ad_A |_{\LS'} , (\ad_A)^t |_{\LS'} ] = 0 . 
\end{align} 
This shows Condition~(3). 
It remains to prove Condition~(4). 
By assumption, one has only to show that 
\begin{align} 
\label{eq:condition4} 
\tr S(\ad'_A)^2 = \tr S(\ad_A)^2 . 
\end{align} 
Consider the orthogonal decomposition $\LS = \LS' \oplus (\LS \ominus \LS')$. 
Note that $\LS \ominus \LS' \subset \LA$. 
Hence, one can see that $\ad_A$ and $(\ad_A)^t$ normalize $\LS'$, 
and act trivially on $\LS \ominus \LS'$. 
This yields that 
\begin{align} 
\begin{split} 
\tr \ad_A & = \tr \ad_A |_{\LS'} = \tr \ad'_A , \\  
\tr (\ad_A)^t & = \tr (\ad_A)^t |_{\LS'} = \tr (\ad'_A)^t . 
\end{split} 
\end{align} 
This completes the proof of \eqref{eq:condition4}. 
\end{proof} 


\section{The solvable model of the noncompact real two-plane Grassmannian} 
\label{sec:4} 

In this section, 
we introduce the \textit{solvable model} of the noncompact real two-plane Grassmannian 
\begin{align} 
G_2^\ast(\R^{n+2}) = \rm{SO}_0(2,n)/(\rm{SO}(2)\times \rm{SO}(n)) , 
\end{align} 
with $n \ge 3$. 
Note that $M = G_2^\ast(\R^{n+2})$ is an irreducible Hermitian symmetric space of noncompact type 
with $\rank M = 2$ and $\dim M = 2n$. 

\begin{Def}\label{solv.model}
Let $c>0$ and $n \ge 3$. We call $(\fr{s}(c), \langle, \rangle, J)$ 
the \textit{solvable model} of $G_2^\ast(\R^{n+2})$ if
\begin{enumerate}[$(1)$]
  \item $\fr{s}(c) :=
  	 \Span \{A_1, A_2, X_0, Y_1, \ldots, Y_{n-2}, Z_1, \ldots, Z_{n-2}, W_0\}$
          is a $2n$-dimensional Lie algebra whose bracket relations are defined by
  \begin{itemize}
  \item $[A_1, X_0] = c X_0$, $[A_1, Y_i] = -(c/2) Y_i$, $[A_1, Z_i] = (c/2) Z_i$, $[A_1, W_0] = 0$, 
  \item $[A_2, X_0] = 0  $,   $[A_2, Y_i] =  (c/2) Y_i$, $[A_2, Z_i] = (c/2) Z_i$, $[A_2, W_0] = c W_0$, 
  \item $[X_0, Y_i] = c Z_i$,   $[Y_i, Z_i] = c W_0$,
  \item and other relations vanish,
  \end{itemize}
  \item $\langle, \rangle$ is an inner product on $\fr{s}(c)$
so that the above basis is orthonormal, 
  \item $J$ is a complex structure on $\fr{s}(c)$ 
given by
\begin{align*}
J(A_1) = -X_0 , \quad J(A_2) =  W_0 , \quad J(Y_i) =  Z_i . 
\end{align*}
\end{enumerate}
\end{Def}

Let $S(c)$ denote the simply-connected Lie group with Lie algebra $\fr{s}(c)$. 
The induced left-invariant metric and (almost) complex structure on $S(c)$ 
are denoted by the same symbols $\langle, \rangle$ and $J$, respectively. 
Our aim in this section is to prove the following theorem. 

\begin{Thm}\label{thm_main4}
The Lie group $S(c)$ equipped with the left-invariant metric $\langle, \rangle$ and the complex structure $J$ 
is isomorphic to $G_2^\ast(\R^{n+2})$ with minimal sectional curvature $-c^2$.
\end{Thm}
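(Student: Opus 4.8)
The plan is to realize $(\LS(c), \inner{\cdot}{\cdot}, J)$ explicitly as the solvable part of an Iwasawa decomposition of $G = \rm{SO}_0(2,n)$, and then to invoke the standard fact that, for a symmetric space $G/K$ of noncompact type, the solvable group $AN$ equipped with the metric induced from $G/K$ acts simply transitively and is isometric to $G/K$. Concretely, I would fix a Cartan decomposition $\LG = \LK \oplus \mathfrak{p}$ of $\LG = \mathfrak{so}(2,n)$ with Cartan involution $\theta$ and Killing form $B$, choose a maximal abelian subspace $\LA \subset \mathfrak{p}$ (two-dimensional, since $\rank G_2^\ast(\R^{n+2}) = 2$), and write down the restricted root space decomposition $\LG = \LG_0 \oplus \bigoplus_{\lambda} \LG_\lambda$. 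For $n \ge 3$ the restricted root system is of type $B_2$, with two long roots of multiplicity one (to be matched with $X_0, W_0$) and two short roots of multiplicity $n-2$ (to be matched with $Y_i, Z_i$); this gives $\dim \LN = 2 + 2(n-2) = 2n-2$ and hence $\dim(\LA \oplus \LN) = 2n = \dim \LS(c)$.

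The core of the proof is to produce an explicit Lie algebra isomorphism $\Phi \colon \LS(c) \to \LA \oplus \LN$. I would select root vectors spanning each $\LG_\lambda$, normalized with respect to the inner product $B_\theta(\cdot,\cdot) := -B(\cdot, \theta\,\cdot)$, and a basis $A_1, A_2$ of $\LA$, arranged so that the weights of the long-root vectors $X_0, W_0$ and the short-root vectors $Y_i, Z_i$ under $\ad_{A_1}, \ad_{A_2}$ reproduce exactly the coefficients $c$ and $c/2$ occurring in Definition~\ref{solv.model}. Sending the given orthonormal basis $\{A_1, A_2, X_0, Y_i, Z_i, W_0\}$ to these vectors defines $\Phi$, and I would verify that $\Phi$ respects every bracket relation of Definition~\ref{solv.model}; the essential checks are the nilpotent relations $[X_0, Y_i] = c Z_i$ and $[Y_i, Z_i] = c W_0$, which must match the brackets of root vectors obtained by adding a long and a short root, and two short roots, respectively. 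Since the chosen basis is $B_\theta$-orthonormal by construction and $B_\theta|_{\LA \oplus \LN}$ is (a multiple of) the metric making $AN$ isometric to $G_2^\ast(\R^{n+2})$, the same map $\Phi$ is an isometry. Finally I would check that the prescribed $J$, with $J(A_1) = -X_0$, $J(A_2) = W_0$, $J(Y_i) = Z_i$, corresponds under the canonical identification $\LA \oplus \LN \cong \mathfrak{p} \cong T_o M$ to the invariant complex structure of the Hermitian symmetric space, which is $\ad(z)|_{\mathfrak{p}}$ for a suitable generator $z$ of the center $\mathfrak{so}(2)$ of $\LK$; this is a finite linear computation.

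It then remains to identify the minimal sectional curvature. Having transported the metric to $\LS(c)$, I would compute sectional curvatures directly from the standard formula for a left-invariant metric on a solvable Lie group. The two-plane $\Span\{A_1, X_0\}$ satisfies $[A_1, X_0] = c X_0$ with $A_1$ the root direction of the long root $X_0$, so it spans an $\RH^2$ of Gaussian curvature $-c^2$, while the maximal flat $\LA = \Span\{A_1, A_2\}$ gives curvature $0$. One checks that the short-root directions and the contributions of the nilpotent brackets only produce sectional curvatures above $-c^2$, so that $-c^2$ is the minimum. I expect the main obstacle to be the bookkeeping in the middle step: choosing the root vectors with precisely the right $B_\theta$-normalization so that all structure constants, and not merely their combinatorial pattern, agree with Definition~\ref{solv.model}, and confirming that this same normalization simultaneously makes $J$ coincide with the Hermitian structure and pins the minimal curvature at exactly $-c^2$ rather than at a rescaled value.
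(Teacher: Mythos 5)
Your overall blueprint coincides with the paper's: Iwasawa decomposition of $\fr{so}(2,n)$, explicit root spaces for the $B_2$ system with multiplicities $(1,1,n-2,n-2)$, an explicit basis matching the brackets of Definition~\ref{solv.model}, and the complex structure transported as $J=\pi^{-1}\circ\ad(Z)\circ\pi$. But there is a genuine error in your metric identification, and it sits exactly at the step you flagged as ``bookkeeping.'' The inner product on $\fr{a}\oplus\fr{n}$ that makes $AN$ isometric to $G_2^\ast(\R^{n+2})$ is \emph{not} a scalar multiple of $B_\theta$ restricted to $\fr{a}\oplus\fr{n}$. Pulling the symmetric metric back through $\pi(X)=(1/2)(X-\theta X)$, and using that $\theta=-\id$ on $\fr{a}$ while $\theta\fr{g}_\lambda=\fr{g}_{-\lambda}$, one obtains
\begin{align*}
\langle X,Y\rangle=\langle X_{\fr{a}},Y_{\fr{a}}\rangle_{\fr{g}}+(1/2)\langle X_{\fr{n}},Y_{\fr{n}}\rangle_{\fr{g}},
\end{align*}
with the factor $1/2$ on $\fr{n}$ only; this anisotropic inner product is precisely the paper's Definition~\ref{def:model} of the solvable model, citing \cite{Tamaru}. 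The asymmetry is not cosmetic. With the paper's matrices one has, for instance, $\langle A_1,A_1\rangle_{\fr{g}}=1$ but $\langle X_0,X_0\rangle_{\fr{g}}=2$; if you pass to $B_\theta$-unit vectors $X_0'=X_0/\sqrt{2}$, $Y_i'=Y_i/\sqrt{2}$, $Z_i'=Z_i/\sqrt{2}$, then $[A_1,X_0']=cX_0'$ still, but $[X_0',Y_i']=(c/\sqrt{2})\,Z_i'$, so the nilpotent structure constants come out as $c/\sqrt{2}$ while the $\fr{a}$-eigenvalues remain $c$. Moreover, replacing $B_\theta$ by $kB_\theta$ rescales \emph{both} families of constants by the same factor $1/\sqrt{k}$, so the discrepancy $\sqrt{2}$ is a scale-invariant obstruction: no single multiple of $B_\theta$ can make your basis orthonormal with all structure constants equal to the same $c$, as Definition~\ref{solv.model} demands. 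The fix is exactly the weighted inner product above, which the paper builds into the solvable model and then verifies by direct computation (Proposition~\ref{prop:basis} together with the proof of Theorem~\ref{thm_main4}).

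A secondary gap concerns the curvature normalization. The paper never computes sectional curvatures on the solvable group; it invokes Helgason's formula (Theorem~11.1, Ch.~VII of \cite{Hel}) that the minimal sectional curvature of a symmetric space of noncompact type equals $-\langle H_{\tilde{\alpha}},H_{\tilde{\alpha}}\rangle_{\fr{g}}$ for the highest root $\tilde{\alpha}=\alpha_1+2\alpha_2$, which yields $-c^2$ in one line. Your candidate plane $\Span\{A_1,X_0\}$ does realize $-c^2$ (it corresponds to a long root), but the other half of your argument --- that \emph{every} two-plane has curvature at least $-c^2$, including mixed planes meeting several root spaces --- is only asserted, and verifying it directly from the left-invariant curvature formula is the laborious part. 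You should replace that check with the highest-root formula, which also removes any worry that the minimum sits at a rescaled value.
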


\subsection{The Iwasawa decompositions} 

In this subsection, 
we recall some general facts on Iwasawa decompositions and the solvable parts.  
Refer to \cite{Hel}. 

Let $M=G/K$ be an irreducible Riemannian symmetric space of noncompact type, 
where $G$ is the identity component of the isometry group, 
and $K$ is the isotropy subgroup of $G$ at some point $o$, called the \textit{origin}. 
Denote by $(\fr{g} , \fr{k} , \theta)$ the corresponding symmetric pair, where 
$\fr{g}$ and $\fr{k}$ are the Lie algebras of $G$ and $K$ respectively, and $\theta$ is the Cartan involution. 
The eigenspace decomposition $\fr{g} = \fr{k} \oplus \fr{p}$ with respect to $\theta$ 
is called the \textit{Cartan decomposition}. 

In order to define the Iwasawa decomposition, 
we need the restricted root system. 
Let us fix $\fr{a}$ as a maximal abelian subspace of $\fr{p}$, 
and denote by $\fr{a}^\ast$ the dual space of $\fr{a}$.
Then we define
\begin{align}
\fr{g}_\lambda := \{ X \in \fr{g} \mid \ad(H)X = \lambda(H) X \ 
(\forall H \in \fr{a}) 
\}
\end{align}
for each $\lambda \in \fr{a}^\ast$. 
We call $\lambda \in \fr{a}^\ast$ a \textit{restricted root} with respect to $\fr{a}$ 
if $\fr{g}_\lambda \ne 0$ and $\lambda \neq 0$. 
Denote by $\Sigma$ the set of restricted roots. 
Let $\Lambda$ be a set of simple roots of $\Sigma$, 
and then denote by $\Sigma^+$ the set of positive roots associated with $\Lambda$. 
Let us define 
\begin{align}
\fr{n} := \bigoplus_{\lambda \in \Sigma^+} \fr{g}_\lambda . 
\end{align} 
Since $[\fr{g}_\lambda, \fr{g}_\mu] \subset \fr{g}_{\lambda+\mu}$ holds 
for all $\lambda , \mu \in \Sigma \cup \{ 0 \}$, 
we have that $\fr{n}$ is a nilpotent Lie subalgebra, and 
$\fr{a} \oplus \fr{n}$ is a solvable Lie subalgebra of $\fr{g}$. 

\begin{Def} 
The decomposition 
$\fr{g} = \fr{k} \oplus \fr{a} \oplus \fr{n}$ 
is called the \textit{Iwasawa decomposition}, 
and $\fr{a} \oplus \fr{n}$ 
is called the \textit{solvable part} of the Iwasawa decomposition. 
\end{Def} 

The solvable part $\fr{a} \oplus \fr{n}$ plays a crucial role in the study of the symmetric space $M = G/K$. 
The reason is the following. 
We refer to \cite[Ch.~VI]{Hel}. 

\begin{Prop} 
\label{prop:S-diffeo} 
Let $S$ be the connected Lie subgroup of $G$ with Lie algebra $\fr{a} \oplus \fr{n}$. 
Then $S$ is simply-connected and acts simply transitively on $M = G/K$. 
In particular, the following map is a diffeomorphism$:$ 
\begin{align}
\Phi : S \to G/K=M : g \mapsto [g]=g.o . 
\end{align}
\end{Prop}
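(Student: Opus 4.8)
The plan is to reduce everything to the global Iwasawa decomposition of $G$, which is exactly the content of \cite[Ch.~VI]{Hel}, and then to do some bookkeeping. Writing $A := \exp \fr{a}$ and $N := \exp \fr{n}$, one has that $A$ and $N$ are closed simply-connected subgroups, that $\fr{n}$ is an ideal of $\fr{a} \oplus \fr{n}$ (since $[\fr{a},\fr{n}] \subseteq \fr{n}$ and $[\fr{n},\fr{n}] \subseteq \fr{n}$), so that $S = AN = NA$ is precisely the connected subgroup with Lie algebra $\fr{a} \oplus \fr{n}$, and---most importantly---that the multiplication map $S \times K \to G$, $(s,k) \mapsto sk$, is a diffeomorphism onto $G$ (equivalently, the classical statement that $K \times A \times N \to G$ is a diffeomorphism). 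I would quote this theorem directly; granting it, note in particular that $S \cap K = \{e\}$ by uniqueness of the factorization $g = sk$.

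Next I would show that $\Phi$ is a bijection, i.e.\ that $S$ acts simply transitively. For transitivity, given any $gK \in G/K$, write $g = sk$ with $s \in S$ and $k \in K$; then $gK = skK = sK = \Phi(s)$, so $\Phi$ is surjective. For freeness, if $s_1 K = s_2 K$ then $s_2^{-1} s_1 \in S \cap K = \{e\}$, whence $s_1 = s_2$; thus $\Phi$ is injective and the stabilizer of $o$ in $S$ is trivial.

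Then I would promote $\Phi$ from a bijection to a diffeomorphism. First, at the origin, the differential of $\pi : G \to G/K$ at $e$ is the projection $\fr{g} \to \fr{g}/\fr{k} \cong \fr{p}$ with kernel $\fr{k}$. Since the Iwasawa and Cartan decompositions give the vector-space identities $\fr{g} = \fr{k} \oplus (\fr{a} \oplus \fr{n}) = \fr{k} \oplus \fr{p}$, both $\fr{a} \oplus \fr{n}$ and $\fr{p}$ are complements of $\fr{k}$, so the restriction of this projection to $\fr{a} \oplus \fr{n}$ is a linear isomorphism onto $\fr{p} \cong T_o M$; in particular $\dim S = \dim \fr{a} + \dim \fr{n} = \dim \fr{p} = \dim M$. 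By the equivariance $\Phi(s' s) = s' \Phi(s)$, and since $S$ acts on $M$ and on itself (by left translation) through diffeomorphisms, $d\Phi$ is an isomorphism at every point, so $\Phi$ is a local diffeomorphism. A bijective local diffeomorphism is a diffeomorphism, which proves the claim.

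Finally, simple-connectedness of $S$ is immediate: $S$ is diffeomorphic to $M$, which, being complete, simply-connected, and of nonpositive curvature, is diffeomorphic to $\R^{\dim M}$ by Cartan--Hadamard; alternatively it is already recorded in the Iwasawa decomposition that $S = AN$ is simply-connected. The only genuinely nontrivial ingredient here is the global Iwasawa decomposition itself; once it is invoked there is no serious obstacle, the only delicate point being the equivariance argument that upgrades the pointwise isomorphism of differentials to a global diffeomorphism.
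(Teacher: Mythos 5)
Your proof is correct, and it is essentially the paper's approach: the paper states Proposition~\ref{prop:S-diffeo} without proof, citing the global Iwasawa decomposition in \cite[Ch.~VI]{Hel}, and your argument is precisely the standard deduction from that theorem (uniqueness of the factorization $g = sk$ giving simple transitivity, the linear-algebra fact that $\fr{a} \oplus \fr{n}$ complements $\fr{k}$ giving $d\Phi_e$ invertible, and equivariance upgrading this to a global diffeomorphism). All steps check out, including the simple-connectedness of $S = AN$ via $S \cong \fr{a} \times \fr{n}$.
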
 

\subsection{The solvable models} 

Let $M = G/K$ be an irreducible Hermitian symmetric space of noncompact type. 
In this subsection, we recall the definition of its solvable model, 
which consists of the solvable part of the Iwasawa decomposition 
and two geometric structures on it. 

First of all, we review geometric structures on $M = G/K$. 
Recall that one can identify $\fr{p} \cong T_o(G/K)$. 
Hence, every $K$-invariant geometric structures on $\fr{p}$ 
(such as inner products and complex structures) 
can be extended to $G$-invariant geometric structures on $M = G/K$. 
The following are well-known facts on Hermitian symmetric spaces. 

\begin{Prop} 
\label{prop:structure_general} 
Let $M = G/K$ be an irreducible Hermitian symmetric space of noncompact type. 
Then we have the following$:$ 
\begin{enumerate} 
\item[$(1)$] 
Let $B$ be the Killing form of $\fr{g}$, and define 
\begin{align*} 
\langle X, Y \rangle_{\fr{g}} := - k B(\theta X, Y) 
\end{align*} 
with $k > 0$. 
Then $\langle , \rangle_{\fr{g}} |_{\fr{p} \times \fr{p}}$ 
gives a $G$-invariant metric on $M = G/K$, 
and every $G$-invariant metric on $M = G/K$ can be obtained in this way. 
\item[$(2)$] 
Let $C(\fr{k})$ be the center of $\fr{k}$. 
Then there exists $Z \in C(\fr{k})$ such that 
\begin{align*} 
(\ad(Z)|_{\fr{p}})^2 = - \id_{\fr{p}} . 
\end{align*} 
This gives a complex structure on $M = G/K$, which is unique up to sign. 
\end{enumerate} 
\end{Prop}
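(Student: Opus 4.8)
The plan is to assemble standard structural facts about symmetric spaces of noncompact type, found in \cite{Hel}, isolating where irreducibility enters. For part $(1)$, I would start from the signature of the Killing form under the Cartan decomposition: since $\fr{g}$ is of noncompact type, $B$ is negative definite on $\fr{k}$, positive definite on $\fr{p}$, and $\fr{k}\perp\fr{p}$ with respect to $B$. Because $\theta$ is an involutive automorphism preserving $B$, the form $\langle X,Y\rangle_{\fr{g}}=-kB(\theta X,Y)$ is symmetric, and it is positive definite: it equals $-kB(X,X)>0$ on $\fr{k}$ and $kB(X,X)>0$ on $\fr{p}$. Since $B$ is $\mathrm{Ad}(G)$-invariant and $\mathrm{Ad}(k)\theta=\theta\,\mathrm{Ad}(k)$ for $k\in K$, this form is $\mathrm{Ad}(K)$-invariant; restricting to $\fr{p}\cong T_oM$ gives an $\mathrm{Ad}(K)$-invariant inner product, which extends to a $G$-invariant metric on $M$ as recalled above.

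For the converse, I would use that the isotropy representation of $K$ on $\fr{p}$ is irreducible over $\mathbb{R}$, which is the Lie-theoretic expression of the irreducibility of $M$. Given two $\mathrm{Ad}(K)$-invariant inner products $\langle,\rangle_1,\langle,\rangle_2$ on $\fr{p}$, define the $\langle,\rangle_1$-self-adjoint positive operator $A$ by $\langle X,Y\rangle_2=\langle AX,Y\rangle_1$. Invariance of both forms makes $A$ commute with the $K$-action, so each eigenspace of $A$ is $K$-invariant; irreducibility forces $A$ to have a single eigenvalue, hence $A=a\,\id_{\fr{p}}$ with $a>0$ and $\langle,\rangle_2=a\langle,\rangle_1$. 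Varying $k>0$ in the Killing-form construction then realizes every positive multiple, proving the claim.

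For part $(2)$, I would invoke that $C(\fr{k})$ is one-dimensional for an irreducible Hermitian symmetric space. For $Z\in C(\fr{k})$ one has $[Z,\fr{k}]=0$, and $\ad(Z)$ preserves $\fr{p}$ since $[\fr{k},\fr{p}]\subset\fr{p}$; moreover $\ad(Z)|_{\fr{p}}$ is skew-symmetric for the invariant metric (as $\ad(\fr{k})$ acts skew-symmetrically) and commutes with $\ad(\fr{k})|_{\fr{p}}$ by the Jacobi identity. Hence $(\ad(Z)|_{\fr{p}})^2$ is $K$-invariant, self-adjoint and negative semidefinite, so the irreducibility argument of part $(1)$ gives $(\ad(Z)|_{\fr{p}})^2=-\lambda\,\id_{\fr{p}}$ with $\lambda\ge 0$; since $\fr{g}$ is semisimple its center is trivial, so $\ad(Z)|_{\fr{p}}\neq 0$ for $Z\neq 0$ and thus $\lambda>0$. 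Rescaling $Z$ by $1/\sqrt{\lambda}$ yields $(\ad(Z)|_{\fr{p}})^2=-\id_{\fr{p}}$. The operator $J:=\ad(Z)|_{\fr{p}}$ is $\mathrm{Ad}(K)$-invariant, hence extends to a $G$-invariant almost complex structure on $M$; being invariant under the full isometry group it is parallel for the symmetric-space connection and therefore integrable, giving a genuine complex structure. Uniqueness up to sign follows because $C(\fr{k})$ is one-dimensional: if $Z_0$ is normalized then $Z=tZ_0$ satisfies $(\ad(Z)|_{\fr{p}})^2=-t^2\,\id_{\fr{p}}$, which equals $-\id_{\fr{p}}$ only for $t=\pm 1$, so $J=\pm J_0$.

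The individual verifications (signatures, skew-symmetry, invariance, rescaling) are formal; the substantive points I would treat carefully are that invariance forces the intertwiner $A$, and likewise $(\ad(Z)|_{\fr{p}})^2$, to be a scalar---which is precisely where irreducibility of the isotropy representation is used---and that the invariant almost complex structure is genuinely integrable rather than merely almost complex. Both are standard for symmetric spaces but constitute the real content, so in the write-up I would cite \cite{Hel} for them and keep the remaining bookkeeping brief.
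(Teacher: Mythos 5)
Your proposal is correct and follows essentially the same route as the paper's proof: check $\mathrm{Ad}(K)$-invariance of $\langle,\rangle_{\fr{g}}|_{\fr{p}\times\fr{p}}$ and $\ad(Z)|_{\fr{p}}$ so they extend to $G$-invariant structures, use Schur's lemma together with irreducibility of the isotropy representation for uniqueness of the metric, and defer the remaining facts about the complex structure to \cite{Hel}. You simply flesh out the details the paper leaves as ``one can easily check'' (Killing-form signatures, the self-adjoint intertwiner argument, skew-symmetry and rescaling of $\ad(Z)|_{\fr{p}}$, integrability via parallelism), which is a sound expansion of the same argument.
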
 

\begin{proof} 
One can easily check that 
$\langle , \rangle_{\fr{g}} |_{\fr{p} \times \fr{p}}$ and $\ad(Z)|_{\fr{p}}$ are $K$-invariant. 
Hence they give rise to a $G$-invariant Riemannian metric and a $G$-invariant complex structure on $M = G/K$, 
respectively. 
The uniqueness of the metric follows from the Schur lemma, 
since $M = G/K$ is irreducible. 
For the uniqueness of the complex structure, 
see \cite[Theorem~4.5, Ch.~VIII]{Hel}. 
\end{proof} 

We here define the solvable model of $M = G/K$. 
In order to give geometric structures involved, we need the linear map 
\begin{align}
\pi : \fr{a} \oplus \fr{n} \to \fr{p}  : X \mapsto X_\fr{p} = (1/2) (X - \theta X) , 
\end{align}
where $\fr{p}$-subscript means the orthogonal projection onto $\fr{p}$. 
Note that $\pi$ is a linear isomorphism, since it satisfies 
\begin{align} 
\label{eq:pi} 
\pi = (d \Phi)_e , 
\end{align} 
where $\Phi : S \to G/K$ is the diffeomorphism given in Proposition~\ref{prop:S-diffeo}. 
Note that Equation~(\ref{eq:pi}) can be found in the proof of \cite[Proposition~4.4]{Tamaru}. 

\begin{Def} 
\label{def:model} 
Let $\fr{a} \oplus \fr{n}$ be the solvable part of the Iwasawa decomposition. 
Then the triplet $(\fr{a} \oplus \fr{n} , \langle, \rangle, J)$ defined by the following is called the 
\textit{solvable model} 
of $M = G/K$$:$ 
\begin{align*}
\langle X, Y \rangle
& := \langle X_\fr{a}, Y_\fr{a} \rangle_{\fr{g}} + (1/2) \langle X_\fr{n}, Y_\fr{n} \rangle_{\fr{g}} , \\ 
J & := \pi^{-1} \circ \ad(Z) \circ \pi . 
\end{align*}
\end{Def} 

In the following, 
we denote the induced left-invariant metric and complex structure on $S$ 
by the same symbols $\langle , \rangle$ and $J$, respectively. 
The following is a key fact on the solvable model. 

\begin{Prop} 
\label{prop:model_isomorphic} 
Assume that $M = G/K$ is equipped with the Riemannian metric given by 
$\langle , \rangle_{\fr{g}} |_{\fr{p} \times \fr{p}}$, 
and with the complex structure defined by $\ad(Z)|_{\fr{p}}$, 
described in Proposition~\ref{prop:structure_general}. 
Let $(\fr{a} \oplus \fr{n} , \langle, \rangle, J)$ be the solvable model. 
Then the corresponding $(S , \langle, \rangle, J)$ is isomorphic to $M = G/K$. 
\end{Prop}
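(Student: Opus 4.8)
The plan is to show that the diffeomorphism $\Phi : S \to G/K$ of Proposition~\ref{prop:S-diffeo} is itself the desired isomorphism, i.e.\ that it is an isometry intertwining the complex structures. The first step is to exploit homogeneity to reduce everything to the single tangent space at the identity. Since $\Phi(sg) = s.\Phi(g)$ for all $s,g \in S$, the map $\Phi$ intertwines the left translations $L_s$ on $S$ with the $S$-action (restricted from $G$) on $G/K$; writing $\ell_s$ for the latter, one has $\Phi \circ L_s = \ell_s \circ \Phi$. The left-invariant metric and complex structure on $S$ make each $L_s$ an isometry and a biholomorphism, while $G$-invariance makes each $\ell_s$ an isometry and a biholomorphism of $G/K$. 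Consequently, if $(d\Phi)_e$ is a linear isometry carrying $J$ to $\ad(Z)|_{\fr{p}}$, then $\Phi$ is an isometric biholomorphism everywhere. By Equation~(\ref{eq:pi}) we have $(d\Phi)_e = \pi$ under the identifications $T_e S \cong \fr{a}\oplus\fr{n}$ and $T_o(G/K) \cong \fr{p}$, so the problem reduces to two statements about $\pi$.

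The complex-structure part is immediate: the definition $J = \pi^{-1}\circ \ad(Z)\circ\pi$ says precisely that $\pi$ carries $J$ to $\ad(Z)|_{\fr{p}}$. Hence the only genuine content is that $\pi$ is a linear isometry from $(\fr{a}\oplus\fr{n}, \langle , \rangle)$ onto $(\fr{p}, \langle , \rangle_{\fr{g}}|_{\fr{p}\times\fr{p}})$, which I would establish by computing $\langle \pi X, \pi Y\rangle_{\fr{g}}$ and matching it against the defining formula for $\langle X, Y\rangle$. The two ingredients are the orthogonality of the restricted root spaces under $\langle , \rangle_{\fr{g}}$ — so that $\fr{g}_\lambda \perp \fr{g}_\mu$ whenever $\lambda \ne \mu$, using $\langle X, Y\rangle_{\fr{g}} = -kB(\theta X, Y)$ together with $B(\fr{g}_\lambda, \fr{g}_\mu)=0$ unless $\lambda+\mu=0$ — and the fact that $\theta$ is an isometry of $\langle , \rangle_{\fr{g}}$.

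Since $\fr{a}\subset\fr{p}$ gives $\theta|_{\fr{a}} = -\id$, one has $\pi|_{\fr{a}} = \id$, while for $X_\fr{n}\in\fr{n}$ the image $\pi(X_\fr{n}) = (1/2)(X_\fr{n} - \theta X_\fr{n})$ lands in $\fr{p}$ with $\theta X_\fr{n}$ in the negative root spaces. Orthogonality forces the $\fr{a}$--$\fr{n}$ cross terms to vanish, so the computation splits into the $\fr{a}$-part, on which $\pi$ is the identity and trivially preserves $\langle , \rangle_{\fr{g}}$, and the $\fr{n}$-part. On the latter, expanding $\langle (1/2)(X_\fr{n} - \theta X_\fr{n}), (1/2)(Y_\fr{n} - \theta Y_\fr{n})\rangle_{\fr{g}}$ kills the mixed terms $\langle X_\fr{n}, \theta Y_\fr{n}\rangle_{\fr{g}}$ and $\langle \theta X_\fr{n}, Y_\fr{n}\rangle_{\fr{g}}$ (positive versus negative roots), whereas $\langle \theta X_\fr{n}, \theta Y_\fr{n}\rangle_{\fr{g}} = \langle X_\fr{n}, Y_\fr{n}\rangle_{\fr{g}}$ because $\theta$ is an isometry; the surviving terms sum to exactly $(1/2)\langle X_\fr{n}, Y_\fr{n}\rangle_{\fr{g}}$. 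This reproduces the factor $1/2$ built into the definition of $\langle , \rangle$ on the $\fr{n}$-component, completing the verification that $\pi$ is an isometry.

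The point requiring care — more a bookkeeping subtlety than a real obstacle — is precisely this normalization: one must confirm that the weight $1/2$ on the $\fr{n}$-component in Definition~\ref{def:model} is exactly the factor produced by the orthogonal projection $\pi$ onto $\fr{p}$, since this is what makes $\pi$ an honest isometry rather than merely a conformal rescaling on the $\fr{n}$-part. Everything else follows formally from homogeneity together with the definitions.
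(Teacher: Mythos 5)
Your proposal is correct and takes essentially the same route as the paper: the paper's proof also rests on the diffeomorphism $\Phi$ of Proposition~\ref{prop:S-diffeo} and the identity $(d\Phi)_e = \pi$ from \eqref{eq:pi}, simply citing \cite[Proposition~4.4]{Tamaru} for the fact that $\Phi$ preserves the metrics and noting that it preserves the complex structures. Your equivariance reduction to the identity, the observation that $J = \pi^{-1}\circ\ad(Z)\circ\pi$ makes the complex-structure part tautological, and the root-space orthogonality computation producing exactly the factor $1/2$ on the $\fr{n}$-component are precisely the details the paper delegates to that citation.
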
 

\begin{proof} 
Recall that $\Phi : S \to G/K$ is a diffeomorphism. 
It follows from (\ref{eq:pi}) that $\Phi$ preserves the metrics 
(see \cite[Proposition~4.4]{Tamaru}), 
and also the complex structures. 
\end{proof} 

\subsection{The Iwasawa decomposition of $\mathfrak{so}(2,n)$} 

We now study the noncompact real two-plane Grassmannian 
\begin{align} 
G_2^\ast(\R^{n+2}) = \rm{SO}_0(2,n)/(\rm{SO}(2)\times\rm{SO}(n)) , 
\end{align} 
with $n \ge 3$. 
Let $\fr{g}$ and $\fr{k}$ 
denote the Lie algebras of $\rm{SO}_0(2,n)$ and $\rm{SO}(2)\times\rm{SO}(n)$, respectively. 
In this subsection, we describe the solvable part of the Iwasawa decomposition of $\fr{g} = \fr{so}(2,n)$. 

First of all, we recall the Cartan decomposition of $\fr{g}$. 
Here and hereafter, we denote by $E_{i,j}$ (or $E_{i j}$ shortly) the usual matrix unit, and put 
\begin{align}
I_{2,n} := - E_{11} - E_{22} + E_{33} + \cdots + E_{n+2,n+2}.
\end{align}
Then one knows that 
\begin{align} 
\fr{g} &= 
\{X \in \fr{sl}(n+2, \R) \mid {}^t X I_{2,n}  + I_{2,n} X = 0 \}, \\
\fr{k} &= \left\{ \left(\begin{array}{c|ccc}
		X & &   & \\\hline
		  & &   & \\
		  & & Y & \\
		  & &   & 
		\end{array} \right) \mid X \in \fr{so}(2), Y \in \fr{so}(n) \right\} , 
\end{align} 
and the Cartan involution $\theta$ is given by 
\begin{align}
\theta : \fr{g} \to \fr{g} : X \mapsto I_{2,n} X I_{2,n} . 
\end{align}
The Cartan decomposition $\fr{g} = \fr{k} \oplus \fr{p}$ with respect to $\theta$ is given by 
\begin{align}
\fr{p} := \left\{ \left(\begin{array}{c|ccc}
		       & & v & \\\hline
		       & &   & \\
		{}^t v & &   & \\
		       & &   & 
		\end{array} \right) \mid v \in M_{2,n}(\R) \right\}.
\end{align}

We next describe the restricted root system and the root spaces of the symmetric space $G_2^\ast(\R^{n+2})$. 
Let us put 
\begin{align}
\fr{a} := \Span\{E_{13}+E_{31}, E_{24} + E_{42}\} \subset \fr{g} , 
\end{align}
which is a maximal abelian subspace in $\fr{p}$. 
Define $\varepsilon_i \in \fr{a}^\ast$ $(i=1,2)$ by 
\begin{align}
\varepsilon_i : \fr{a} \to \R : a_1 (E_{13}+E_{31}) + a_2 (E_{24} + E_{42}) \mapsto a_i . 
\end{align}
We put $\alpha_1 := \varepsilon_1 - \varepsilon_2$, $\alpha_2 := \varepsilon_2 \in \fr{a}^\ast$, and 
\begin{align}
\Sigma 
:= \{ \pm \alpha_1, \ \pm \alpha_2, \ \pm(\alpha_1+\alpha_2), \ \pm(\alpha_1+2\alpha_2) \} 
\subset \fr{a}^\ast . 
\end{align} 
For simplicity of the notations, we put 
\begin{align} 
U := \left(\begin{array}{cc} & 1 \\ -1 & \end{array} \right) , \quad 
V := \left(\begin{array}{cc} & 1 \\  1 & \end{array} \right) , 
\end{align} 
which will be used several times henceforth. 

\begin{Lem} \label{lem_rootsp} 
The restricted root system of $\fr{g}$ with respect to $\fr{a}$ coincides with $\Sigma$. 
In particular, we have the following$:$ 
\begin{align*} 
	\fr{g}_{\alpha_1} &= \Span \left\{
		\left( \begin{array}{c|c|c} U & V & \phantom{O} \\\hline V & U & \\\hline & & \end{array} \right)
                \right\}, \\
	\fr{g}_{\alpha_2} &= \left\{
		\left( \begin{array}{c|c|c} & & y \\\hline & & y \\\hline {}^t y & -{}^t y & \end{array} \right)
        	\mid
		y = \left(\begin{array}{ccc} 0 & \cdots & 0 \\ \ast & \cdots & \ast \end{array} \right) \in M_{2,n-2}(\R)
                \right\}, \\
	\fr{g}_{\alpha_1 + \alpha_2} &= \left\{
		\left( \begin{array}{c|c|c} & & z \\\hline & & z \\\hline {}^t z & -{}^t z & \end{array} \right)
        	\mid
		z = \left(\begin{array}{ccc} \ast & \cdots & \ast \\ 0 & \cdots & 0 \end{array} \right) \in M_{2,n-2}(\R)
		\right\}, \\
	\fr{g}_{\alpha_1+2\alpha_2} &= \Span \left\{
		\left( \begin{array}{c|c|c} -U & U & \phantom{O} \\\hline -U & U & \\\hline & & \end{array} \right)
                \right\},
\end{align*} 
where the size of the every block decomposition is $(2, 2, n-2)$. 
Therefore, both $\fr{g}_{\alpha_1}$ and $\fr{g}_{\alpha_1+2\alpha_2}$ are of dimension $1$, 
whereas both $\fr{g}_{\alpha_2}$ and $\fr{g}_{\alpha_1+\alpha_2}$ are of dimension $n-2$. 
\end{Lem}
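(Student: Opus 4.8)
The plan is to exploit the fact that, for $H \in \fr{a} \subset \fr{p}$, the operator $\ad(H)$ is symmetric with respect to the inner product $\langle \cdot, \cdot \rangle_{\fr{g}}$ of Proposition~\ref{prop:structure_general}. Consequently the two generators $\ad(E_{13}+E_{31})$ and $\ad(E_{24}+E_{42})$ are commuting self-adjoint endomorphisms of $\fr{g}$, hence simultaneously diagonalizable over $\R$, and their common real eigenspaces are exactly the restricted root spaces. Thus the whole problem reduces to computing these simultaneous eigenspaces explicitly and reading off the corresponding functionals on $\fr{a}$.

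Conceptually the cleanest bookkeeping comes from passing to the null basis $f_i^{\pm} := (1/\sqrt{2})(e_i \pm e_{i+2})$ for $i=1,2$, leaving $e_5, \ldots, e_{n+2}$ untouched. In this basis the form $I_{2,n}$ becomes split (pairing $f_i^{+}$ with $f_i^{-}$), and a general $H = a_1(E_{13}+E_{31}) + a_2(E_{24}+E_{42})$ acts diagonally on the standard representation with weights $a_1, -a_1, a_2, -a_2$ on $f_1^{+}, f_1^{-}, f_2^{+}, f_2^{-}$ and weight $0$ on the $(n-2)$-dimensional neutral block. The weight of a root vector is then a difference of these, which immediately produces the functionals $\pm\varepsilon_1 \pm \varepsilon_2$ and $\pm\varepsilon_1, \pm\varepsilon_2$, that is, precisely $\Sigma$, together with the asserted multiplicities: the long roots $\pm(\varepsilon_1 \pm \varepsilon_2)$ (namely $\pm\alpha_1$ and $\pm(\alpha_1+2\alpha_2)$) couple only the four coordinates $1,2,3,4$ and so are one-dimensional, whereas the short roots $\pm\varepsilon_1, \pm\varepsilon_2$ (namely $\pm(\alpha_1+\alpha_2)$ and $\pm\alpha_2$) couple one of these to the neutral block and so have multiplicity $n-2$.

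To get the explicit matrices stated in the lemma it then suffices to transport these null-basis root vectors back to the original basis; this is exactly what forces the skew block $U$ and the symmetric block $V$ into $\fr{g}_{\alpha_1}$ and $\fr{g}_{\alpha_1+2\alpha_2}$. In practice I would simply present the four candidate subspaces as listed and verify them directly, by computing $HX - XH$ in block form for a general $H \in \fr{a}$ and confirming that $[H,X] = \lambda(H) X$ with $\lambda$ the claimed functional. For the short-root spaces $\fr{g}_{\alpha_2}$ and $\fr{g}_{\alpha_1+\alpha_2}$ this is a short computation, since the relevant brackets only pair a single coordinate against the neutral block; the negative root spaces are recovered by applying $\theta$ (that is, transposition) to the positive ones.

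Finally, to see that $\Sigma$ is the \emph{entire} root system and that the listed spaces are the \emph{full} root spaces, I would close with a dimension count. A direct check gives $\fr{g}_0 = Z_{\fr{g}}(\fr{a}) = \fr{a} \oplus \fr{m}$, where $\fr{m} \cong \fr{so}(n-2)$ is the rotation algebra of the neutral block, so $\dim \fr{g}_0 = 2 + (n-2)(n-3)/2$. Adding the dimensions of the eight exhibited root spaces, namely $2\bigl(1 + (n-2) + (n-2) + 1\bigr) = 4n-4$, yields $(n+1)(n+2)/2 = \dim \fr{so}(2,n)$. Since the displayed eigenspaces together with $\fr{g}_0$ already exhaust $\fr{g}$, there can be no further roots, and each listed space must coincide with the corresponding full root space. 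The main obstacle here is purely computational: keeping the $2\times 2$ mixing on the $\{1,2,3,4\}$-block straight for the long roots, which is precisely the place where the combinations $U$ and $V$, rather than plain matrix units, are unavoidable.
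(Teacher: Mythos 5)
Your proposal is correct, and its computational core coincides with the paper's: the paper likewise establishes the inclusions $(\supset)$ by direct block computation of $[H,X]$ for $H \in \fr{a}$. Where you genuinely diverge is in the completeness step, i.e.\ showing that $\Sigma$ is the entire restricted root system and that the exhibited subspaces are the \emph{full} root spaces. The paper gets this almost for free from the general theory it has already set up: since $S$ with Lie algebra $\fr{a} \oplus \fr{n}$ acts simply transitively on $G_2^\ast(\R^{n+2})$ (Proposition~\ref{prop:S-diffeo}), one has $\dim \fr{n} = \dim M - \dim \fr{a} = 2n-2$, and because the four exhibited positive root spaces lie in $\fr{n}$ (under a suitable choice of simple roots) and already sum to $1 + (n-2) + (n-2) + 1 = 2n-2$, all inclusions must be equalities. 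You instead argue entirely inside $\fr{g} = \fr{so}(2,n)$: the passage to the null basis $f_i^\pm$ diagonalizes every $\ad(H)$, $H \in \fr{a}$, on the standard representation with weights $\pm \varepsilon_1, \pm \varepsilon_2$ and $0$ with multiplicity $n-2$, so the possible restricted roots and their multiplicities ($\pm\varepsilon_1 \pm \varepsilon_2$ with multiplicity $1$, $\pm\varepsilon_i$ with multiplicity $n-2$) are determined a priori, and your closing count $\dim \fr{g}_0 + \sum_{\lambda \in \Sigma} \dim \fr{g}_\lambda = 2 + (n-2)(n-3)/2 + (4n-4) = (n+1)(n+2)/2 = \dim \fr{so}(2,n)$ is arithmetically correct, with $\fr{g}_0 = \fr{a} \oplus \fr{so}(n-2)$ as you claim, and with the negative root spaces recovered via $\theta(X) = -{}^t X$. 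Your route is more self-contained: it never invokes the simply transitive action or the geometry of the symmetric space, it sidesteps the paper's implicit appeal to a suitable choice of simple roots making the four functionals positive, and it yields the full root space decomposition including $\fr{g}_0$. The price is the extra bookkeeping of the basis change on the $\{1,2,3,4\}$-block (which, as you note, is exactly where $U$ and $V$ arise); the paper's argument is shorter but leans on the Iwasawa machinery established in Section~\ref{sec:4}.
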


\begin{proof}
One can show each inclusion $(\supset)$ by direct calculations. 
The converse inclusions can be seen by dimensional reasons. 
In fact, by the definition of the Iwasawa decomposition, one has 
\begin{align} 
\label{eq:root-space}
\fr{g}_{\alpha_1} \oplus \fr{g}_{\alpha_2} \oplus \fr{g}_{\alpha_1+\alpha_2} 
\oplus \fr{g}_{\alpha_1+2\alpha_2} \subset \LN , 
\end{align} 
under a suitable choice of the set of simple roots. 
Furthermore, since the corresponding group $S$ with Lie algebra $\fr{a} \oplus \fr{n}$ 
acts simply transitively on $G_2^\ast(\R^{n+2})$, 
we have 
\begin{align} 
2n = \dim G_2^\ast(\R^{n+2}) = \dim (\fr{a} \oplus \fr{n}) = 2 + \dim \fr{n} . 
\end{align} 
Therefore, all the equalities in the assertion, 
and also the equality in (\ref{eq:root-space}) hold. 
This concludes that the restricted root system coincides with $\Sigma$. 
\end{proof} 

Note that the root system $\Sigma$ is of type $B_2$, 
and $\Lambda := \{ \alpha_1 , \alpha_2 \}$ is a set of simple roots of $\Sigma$. 
We thus have 
\begin{align} 
\fr{a} \oplus \fr{n} = \fr{a} \oplus \fr{g}_{\alpha_1} \oplus \fr{g}_{\alpha_2} 
\oplus \fr{g}_{\alpha_1+\alpha_2} 
\oplus \fr{g}_{\alpha_1+2\alpha_2} . 
\end{align} 
We here take a basis of $\fr{a} \oplus \fr{n}$, 
and see that it is isomorphic to $\fr{s}(c)$ defined in Definition~\ref{solv.model}. 

\begin{Prop} 
\label{prop:basis} 
Let us define 
\begin{align}
	A_1 &:= (c/2) \left( (E_{13} + E_{31}) - (E_{24} + E_{42}) \right) \in \fr{a}, \\
	A_2 &:= (c/2) \left( (E_{13} + E_{31}) + (E_{24} + E_{42}) \right) \in \fr{a}, \\
	X_0 &:= (c/2) \left( \begin{array}{c|c|c}
        		U & V & \phantom{O} \\ \hline 
                        V & U &             \\ \hline 
                          &   & 
                      \end{array} \right) \in \fr{g}_{\alpha_1}, \\
	Y_i &:= (c/\sqrt{2}) \left( \begin{array}{c|c|c}
                                  &              & E_{2i} \\ \hline 
                                  &              & E_{2i} \\ \hline 
                      {}^t E_{2i} & -{}^t E_{2i} & 
                      \end{array} \right) \in \fr{g}_{\alpha_2}, \\
	Z_i &:= (c/\sqrt{2}) \left( \begin{array}{c|c|c}
                                  &              & E_{1i} \\ \hline 
                                  &              & E_{1i} \\ \hline 
                      {}^t E_{1i} & -{}^t E_{1i} & 
                      \end{array} \right) \in \fr{g}_{\alpha_1+\alpha_2},\\
	W_0 &:= (c/2) \left( \begin{array}{c|c|c}
        		-U & U & \phantom{O} \\ \hline 
                        -U & U &             \\ \hline 
                          &    & 
                      \end{array} \right) \in \fr{g}_{\alpha_1+2\alpha_2} . 
\end{align}
Then they form a basis of $\fr{a} \oplus \fr{n}$, and the bracket relations among them 
are exactly same as the ones in Definition~\ref{solv.model}. 
\end{Prop}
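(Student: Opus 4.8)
The plan is to verify the three assertions in turn: membership of each listed vector in the stated summand, the basis property, and finally the bracket relations.

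\emph{Membership and basis.} First I would compare each listed vector with the explicit generators of the root spaces produced in Lemma~\ref{lem_rootsp}. The vectors $A_1$ and $A_2$ are linear combinations of $E_{13}+E_{31}$ and $E_{24}+E_{42}$, hence lie in $\fr{a}$; the vectors $X_0$ and $W_0$ are scalar multiples of the single generators of the one-dimensional spaces $\fr{g}_{\alpha_1}$ and $\fr{g}_{\alpha_1+2\alpha_2}$; and $Y_i$, $Z_i$ are the generators of $\fr{g}_{\alpha_2}$ and $\fr{g}_{\alpha_1+\alpha_2}$ obtained by choosing $y = E_{2i}$ and $z = E_{1i}$, rescaled by $c/\sqrt{2}$. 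Since these $2n$ vectors are nonzero multiples of a basis adapted to the direct sum $\fr{a} \oplus \fr{n} = \fr{a} \oplus \fr{g}_{\alpha_1} \oplus \fr{g}_{\alpha_2} \oplus \fr{g}_{\alpha_1+\alpha_2} \oplus \fr{g}_{\alpha_1+2\alpha_2}$, they are linearly independent and therefore form a basis.

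\emph{The brackets with $\fr{a}$.} Because each root space is the $\ad(A)$-eigenspace of eigenvalue $\lambda(A)$ for $A \in \fr{a}$, all the brackets $[A_1, \cdot]$ and $[A_2, \cdot]$ reduce to evaluating the roots on $A_1$ and $A_2$. From the definitions one reads off $\varepsilon_1(A_1) = c/2$, $\varepsilon_2(A_1) = -c/2$ and $\varepsilon_1(A_2) = \varepsilon_2(A_2) = c/2$; substituting $\alpha_1 = \varepsilon_1 - \varepsilon_2$ and $\alpha_2 = \varepsilon_2$ then yields $\alpha_1(A_1) = c$, $\alpha_2(A_1) = -c/2$, $(\alpha_1+\alpha_2)(A_1) = c/2$, $(\alpha_1+2\alpha_2)(A_1) = 0$, and symmetrically $\alpha_1(A_2) = 0$, $\alpha_2(A_2) = c/2$, $(\alpha_1+\alpha_2)(A_2) = c/2$, $(\alpha_1+2\alpha_2)(A_2) = c$. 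These are precisely the coefficients in the first two lines of Definition~\ref{solv.model}.

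\emph{The brackets among the nilpotent generators.} Here I would first use $[\fr{g}_\lambda, \fr{g}_\mu] \subset \fr{g}_{\lambda+\mu}$ to dispose of all the vanishing relations at once: whenever the sum of two of the roots $\alpha_1, \alpha_2, \alpha_1+\alpha_2, \alpha_1+2\alpha_2$ is neither a root nor $0$, the corresponding bracket must vanish. This kills $[Y_i, Y_j]$, $[Z_i, Z_j]$, $[X_0, Z_i]$, $[X_0, W_0]$, $[Y_i, W_0]$ and $[Z_i, W_0]$, since the relevant sums $2\alpha_2$, $2(\alpha_1+\alpha_2)$, $2\alpha_1+\alpha_2$, $2\alpha_1+2\alpha_2$, $\alpha_1+3\alpha_2$ and $2\alpha_1+3\alpha_2$ all lie outside $\Sigma$. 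The only brackets not forced to vanish are $[X_0, Y_i] \in \fr{g}_{\alpha_1+\alpha_2}$ and $[Y_i, Z_j] \in \fr{g}_{\alpha_1+2\alpha_2}$, which I would evaluate by direct matrix multiplication in the block form of size $(2,2,n-2)$, using $U^2 = -I_2$, $V^2 = I_2$, and the elementary products of the matrix units $E_{1i}$, $E_{2i}$; the expected outcome is $[X_0, Y_i] = c Z_i$ and $[Y_i, Z_j] = \delta_{ij}\, c W_0$.

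\emph{Main obstacle.} The only genuine computation, and the one place where care is needed, is this last pair of matrix products. One must track the block multiplications involving the $2 \times 2$ blocks $U$ and $V$ together with the rescaled matrix units, and check that the chosen normalizations — the factor $c/2$ on $X_0$ and $W_0$ and the factor $c/\sqrt{2}$ on $Y_i$ and $Z_i$ — are exactly what make the structure constants equal to $c$ rather than to $c/\sqrt{2}$ or $c^2/2$. In other words, the $1/\sqrt{2}$ normalization of $Y_i$ and $Z_i$ is dictated precisely by the requirements $[X_0, Y_i] = c Z_i$ and $[Y_i, Z_i] = c W_0$, and verifying this scalar bookkeeping is the crux of the proof.
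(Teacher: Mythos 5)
Your proposal is correct and takes essentially the same route as the paper, whose proof simply reads off the basis property from Lemma~\ref{lem_rootsp} and verifies the bracket relations ``by direct calculations, which we omit.'' Your way of organizing that omitted computation---evaluating $\alpha_1, \alpha_2, \alpha_1+\alpha_2, \alpha_1+2\alpha_2$ on $A_1, A_2$ to get the $\fr{a}$-brackets, invoking $[\fr{g}_\lambda, \fr{g}_\mu] \subset \fr{g}_{\lambda+\mu}$ with the $B_2$ root system to dispose of the vanishing brackets, and reducing the genuine work to the two matrix products $[X_0, Y_i] = c Z_i$ and $[Y_i, Z_j] = \delta_{ij}\, c\, W_0$ (where indeed $U+V = 2E_{12}$ and the $c/\sqrt{2}$ normalization produce structure constant $c$)---is exactly the natural implementation of the paper's direct calculation, and all your stated eigenvalues and structure constants check out.
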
 

\begin{proof} 
It follows immediately from Lemma~\ref{lem_rootsp} that they form a basis of $\fr{a} \oplus \fr{n}$. 
The bracket relations among them can be calculated directly, which we omit. 
\end{proof} 

\subsection{The solvable model of $G_2^\ast(\R^{n+2})$} 

In this subsection we prove Theorem~\ref{thm_main4}. 
Since the Lie algebras $\fr{a} \oplus \fr{n}$ and $\fr{s}(c)$ are isomorphic, 
we have only to show that the inner products and the complex structures are also isomorphic. 

First of all, 
we describe the metric and the complex structure on $G_2^\ast(\R^{n+2})$, 
in terms of the Cartan decomposition $\fr{g} = \fr{k} \oplus \fr{p}$. 

\begin{Prop} 
\label{prop:structure_grassmann} 
For $G_2^\ast(\R^{n+2})$ with $n \geq 3$, we have the following$:$ 
\begin{enumerate} 
\item[$(1)$] 
Let $c>0$, and define $\langle X,Y \rangle_{\fr{g}} := (1/c^2) \tr ({}^t X Y)$ for $X, Y \in \fr{g}$. 
Then $\langle , \rangle_{\fr{g}} |_{\fr{p} \times \fr{p}}$ gives a $G$-invariant metric on $G_2^\ast(\R^{n+2})$, 
and every $G$-invariant metric on $G_2^\ast(\R^{n+2})$ is obtained in this way. 
\item[$(2)$] 
The minimal sectional curvature of $G_2^\ast(\R^{n+2})$ 
with respect to the metric given by the above $\langle , \rangle_{\fr{g}} |_{\fr{p} \times \fr{p}}$ 
coincides with $-c^2$. 
\item[$(3)$] 
The following $Z \in \fr{k}$ gives a complex structure on $G_2^\ast(\R^{n+2})$, 
which is unique up to sign$:$ 
\begin{align*} 
Z :=  \left( \begin{array}{c|ccc}
		U & &   & \\\hline
		  & &   & \\
		  & & \phantom{Y}  & \\
		  & &   & 
		\end{array} \right) , \quad 
U := \left(\begin{array}{cc} & 1 \\ -1 & \end{array} \right) . 
\end{align*}
\end{enumerate} 
\end{Prop}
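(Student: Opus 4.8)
The plan is to obtain parts (1) and (3) from the general facts about Hermitian symmetric spaces recorded in Proposition~\ref{prop:structure_general}, by identifying the correct normalization constant and the center of $\fr{k}$, and to treat the curvature claim (2) as the genuinely analytic part. For part (1), I would first note that every element of $\fr{p}$ is a symmetric matrix, since it has the block form $\left(\begin{smallmatrix} 0 & v \\ {}^t v & 0\end{smallmatrix}\right)$, which equals its own transpose; hence $\tr({}^t X Y) = \tr(XY)$ for $X,Y \in \fr{p}$. Because $\fr{g} = \fr{so}(2,n)$ is simple for $n \ge 3$, its Killing form is $B(X,Y) = n\,\tr(XY)$, and on $\fr{p}$ one has $\theta|_{\fr{p}} = -\id$, so $-kB(\theta X,Y)|_{\fr{p}} = kB(X,Y)|_{\fr{p}} = kn\,\tr(XY)$. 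Comparing with $\langle X,Y\rangle_{\fr{g}} = (1/c^2)\tr(XY)$ identifies $\langle,\rangle_{\fr{g}}|_{\fr{p}\times\fr{p}}$ with the metric of Proposition~\ref{prop:structure_general}(1) for $k = 1/(nc^2) > 0$; it is therefore $G$-invariant, and as $c$ runs through $(0,\infty)$ the constant $k$ does too, so every $G$-invariant metric arises this way.

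For part (3), I would identify the center $C(\fr{k})$ of $\fr{k} = \fr{so}(2)\oplus\fr{so}(n)$. Since $\fr{so}(n)$ is centerless for $n \ge 3$, one has $C(\fr{k}) = \fr{so}(2)$, the top-left block, which is spanned by the displayed $Z$. It then remains to verify $(\ad(Z)|_{\fr{p}})^2 = -\id_{\fr{p}}$: a direct block computation gives $[Z, \left(\begin{smallmatrix} 0 & v \\ {}^t v & 0\end{smallmatrix}\right)] = \left(\begin{smallmatrix} 0 & Uv \\ {}^t(Uv) & 0\end{smallmatrix}\right)$, so $\ad(Z)$ acts on $\fr{p}$ by $v \mapsto Uv$, and since $U^2 = -I_2$ squaring yields $-\id_{\fr{p}}$. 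Existence and uniqueness up to sign of the complex structure then follow from Proposition~\ref{prop:structure_general}(2).

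The heart is part (2). Using the symmetric-space curvature tensor $R(X,Y)Z = -[[X,Y],Z]$ on $\fr{p}$, together with $\theta|_{\fr{p}} = -\id$ and $\ad$-invariance (equivalently, a short trace manipulation exploiting that $X,Y \in \fr{p}$ are symmetric while $[X,Y]$ is skew), I would reduce the sectional curvature of a $\langle,\rangle_{\fr{g}}$-orthonormal plane $\{X,Y\}$ to $K(X,Y) = -(1/c^2)\,\tr({}^t[X,Y]\,[X,Y])$. Writing $X = c\tilde X$, $Y = c\tilde Y$ with $\tilde X,\tilde Y$ orthonormal for the Hilbert--Schmidt inner product turns this into $K = -c^2\,\|[\tilde X,\tilde Y]\|^2$, where $\|\cdot\|$ denotes the Hilbert--Schmidt norm. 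Thus the minimal sectional curvature equals $-c^2$ times the maximum of $\|[\tilde X,\tilde Y]\|^2$ over Hilbert--Schmidt-orthonormal pairs in $\fr{p}$, and the whole statement reduces to showing this maximum equals $1$.

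Writing $\tilde X,\tilde Y$ through $v,w \in M_{2,n}(\R)$, the commutator is block diagonal with blocks $v{}^t w - w{}^t v \in \fr{so}(2)$ and ${}^t v w - {}^t w v \in \fr{so}(n)$, so that $\|[\tilde X,\tilde Y]\|^2 = \|v{}^tw - w{}^tv\|^2 + \|{}^tvw - {}^twv\|^2$ under the constraints $\|v\|^2 = \|w\|^2 = 1/2$ and $\langle v,w\rangle = 0$. The lower bound is immediate from one optimal configuration: taking a holomorphic plane $\{X, JX\}$, concretely $v = (1/2)(E_{11}+E_{22}) \in M_{2,n}(\R)$ and $w = Uv$, gives value $1$, which also explains geometrically that the minimum is attained along a totally geodesic $\RH^2 = \CH^1$ of curvature $-c^2$. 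I expect the main obstacle to be the matching upper bound $\|v{}^tw - w{}^tv\|^2 + \|{}^tvw - {}^twv\|^2 \le 1$ for all admissible $v,w$: this is a constrained optimization that I would settle either by a Lagrange-multiplier and singular-value analysis of the pair $(v,w)$, or by recognizing the left-hand side as (a multiple of) the norm of the curvature operator and invoking the root-space structure of Lemma~\ref{lem_rootsp}, for which the extremal directions are governed by the long roots.
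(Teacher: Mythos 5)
Parts (1) and (3) of your proposal are correct and essentially coincide with the paper's proof: the paper likewise writes the Killing form as $B(X,Y)=k'\tr(XY)$ (leaving $k'$ implicit, where you correctly compute $k'=n$), uses $\theta X=-{}^tX$ to identify $(1/c^2)\tr({}^tXY)$ with $-kB(\theta X,Y)$ for $k=1/(k'c^2)$ and then invokes Proposition~\ref{prop:structure_general}(1), and for (3) it observes $C(\fr{k})=\Span\{Z\}$ and checks $(\ad(Z)|_{\fr{p}})^2=-\id_{\fr{p}}$ from $U^2=-I$, exactly as in your block computation $v\mapsto Uv$.

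The genuine gap is in part (2), which you correctly identify as the heart. Your reduction is sound: $K(X,Y)=-(1/c^2)\tr({}^t[X,Y][X,Y])$ for $\langle,\rangle_{\fr{g}}$-orthonormal $X,Y\in\fr{p}$, the rescaling to $K=-c^2\|[\tilde X,\tilde Y]\|^2$, the block form of the commutator, and the configuration $v=(1/2)(E_{11}+E_{22})$, $w=Uv$ achieving the value $1$ all check out; this proves that the minimal sectional curvature is $\le -c^2$. But the matching upper bound $\|v{}^tw-w{}^tv\|^2+\|{}^tvw-{}^twv\|^2\le 1$ over all admissible pairs is the entire content of the remaining inequality, and you do not prove it --- you only name two strategies. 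Neither is routine as stated: the Lagrange-multiplier/singular-value route is a coupled constrained optimization in the pair $(v,w)$ that still has to be carried out, and your second suggestion (``extremal directions governed by the long roots'') is not an argument but precisely the statement of the theorem the paper cites to bypass all of this, namely \cite[Theorem~11.1, Ch.~VII]{Hel}: the minimal sectional curvature of a symmetric space of noncompact type equals $-\langle H_{\tilde\alpha},H_{\tilde\alpha}\rangle_{\fr{g}}$ for the highest restricted root $\tilde\alpha$. The paper then simply computes, with $\tilde\alpha=\alpha_1+2\alpha_2$, that $H_{\tilde\alpha}=(c^2/2)\left((E_{13}+E_{31})+(E_{24}+E_{42})\right)$ and $\langle H_{\tilde\alpha},H_{\tilde\alpha}\rangle_{\fr{g}}=c^2$. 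To close your argument you should either cite that theorem outright (at which point your variational setup becomes unnecessary, serving only as a verification of sharpness) or actually carry out the upper-bound optimization; as written, your proof of (2) establishes only one of the two required inequalities.
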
 

\begin{proof} 
One knows that the Killing form $B$ of $\fr{g}$ can be expressed as 
\begin{align} 
B(X,Y) = k^\prime \tr (X Y) 
\end{align} 
for some $k^\prime > 0$. 
On the other hand, the Cartan involution $\theta$ satisfies 
\begin{align} 
\label{eq:theta} 
\theta(X) = I_{2,n} X I_{2,n} = (- {}^t X I_{2,n}) I_{2,n} = - {}^t X 
\end{align} 
for all $X \in \fr{g}$. 
Therefore, by putting $k := 1 / (k^\prime c^2)$, we have 
\begin{align}
(1/c^2) \tr ({}^t X Y) 
= (1/ (k^\prime c^2)) B({}^t X , Y) = - k B (\theta X , Y) . 
\end{align}
Hence Proposition~\ref{prop:structure_general} yields the first assertion. 

We show the second assertion. 
According to \cite[Theorem~11.1, Ch.~VII]{Hel}, 
the minimal sectional curvature of a symmetric space of noncompact type
is given by $- \langle H_{\tilde{\alpha}}, H_{\tilde{\alpha}} \rangle_{\fr{g}}$, 
where $\tilde{\alpha}$ is the highest root, and $H_{\tilde{\alpha}}$ is the root vector determined by 
$\langle H, H_{\tilde{\alpha}} \rangle_{\fr{g}} = \tilde{\alpha} (H)$ 
for every $H \in \fr{a}$. 
In the case of $G_2^\ast(\R^{n+2})$, 
we have $\tilde{\alpha} = \alpha_1 + 2 \alpha_2$, 
and it follows from direct calculations that 
\begin{align} 
H_{\tilde{\alpha}} 
	= (c^2/2) \left( (E_{13} + E_{31}) + (E_{24} + E_{42}) \right) . 
\end{align} 
Therefore, one has $\langle H_{\tilde{\alpha}}, H_{\tilde{\alpha}} \rangle_{\fr{g}} = c^2$, 
which shows the second assertion. 

For the third assertion, 
it is easy to see that the center $C(\fr{k})$ of $\fr{k}$ coincides with $\Span \left\{ Z \right\}$. 
Furthermore, $Z$ satisfies 
\begin{align} 
(\ad(Z)|_\fr{p})^2 = -\id_\fr{p} , 
\end{align} 
since $U^2=-I$. 
This completes the proof. 
\end{proof}

We are now in position to prove Theorem~\ref{thm_main4}, 
by studying the structures of the Lie algebra $\fr{a} \oplus \fr{n}$ explicitly. 

\begin{proof}[Proof of Theorem~\ref{thm_main4}] 
Let $(\fr{a} \oplus \fr{n} , \langle , \rangle , J)$ be the solvable model 
of $G_2^\ast(\R^{n+2})$, defined in Definition~\ref{def:model}. 
Consider the basis 
\begin{align} 
\{A_1, A_2, X_0, Y_1, \ldots, Y_{n-2}, Z_1, \ldots, Z_{n-2}, W_0 \} 
\end{align} 
given in Proposition~\ref{prop:basis}. 
We have only to show that 
this basis satisfies the conditions in Definition~\ref{solv.model}. 

Recall that $\langle X,Y \rangle_{\fr{g}} := (1/c^2) \tr ({}^t X Y)$ for $X, Y \in \fr{g}$. 
Then, by Proposition~\ref{prop:structure_grassmann}, 
the minimal sectional curvature with respect to $\langle , \rangle_{\fr{g}} |_{\fr{p} \times \fr{p}}$ 
is equal to $-c^2$. 
According to Proposition~\ref{prop:model_isomorphic}, 
the corresponding inner product $\langle, \rangle$ on $\fr{a} \oplus \fr{n}$ is given by 
\begin{align} 
\langle X , Y \rangle = 
(1/c^2) \tr (X_{\fr{a}} Y_{\fr{a}}) + (1/(2 c^2)) \tr ({}^t X_{\fr{n}} Y_{\fr{n}}) 
\end{align} 
for all $X, Y \in \fr{a} \oplus \fr{n}$. 
One can show by direct calculations that the above basis is orthonormal 
with respect to this inner product $\langle , \rangle$. 

Recall that the complex structure $J$ on $\fr{a}\oplus\fr{n}$ is defined by 
\begin{align} 
J := \pi^{-1} \circ \ad(Z) \circ \pi , 
\end{align} 
where $\pi : \fr{a} \oplus \fr{n} \to \fr{p}$ is the projection given by 
$\pi (X) = (1/2) (X - \theta X)$, 
and $Z$ is given in Proposition~\ref{prop:structure_grassmann}. 
Hence one can see that 
\begin{align} 
J (X_0) = \pi^{-1} [ 
\left( \begin{array}{c|c|c}
U & 0 & \phantom{O} \\ \hline 
0 & 0 &             \\ \hline 
&   & 
\end{array} \right) , 
(c/2) \left( \begin{array}{c|c|c}
0 & V & \phantom{O} \\ \hline 
V & 0 &             \\ \hline 
&   & 
\end{array} \right) 
] = A_1 . 
\end{align} 
By similar calculations, we have 
\begin{align} 
J(W_0) = - A_2 , \quad J(Y_i) = Z_i . 
\end{align} 
Therefore $J$ satisfies the condition in Definition~\ref{solv.model}. 
\end{proof}

\section{Proof of the main theorem} 

In this section, 
we prove our main theorem, 
stating that 
$G_{0,2}$ of dimension $2n+1$ with $n \geq 2$ 
can be realized as a homogeneous hypersurface in the noncompact real two-plane Grassmannian $G_2^\ast(\R^{n+3})$. 

\subsection{Preliminaries on Lie hypersurfaces} 

The contact metric manifolds $G_{0,2}$ will be realized as particular homogeneous hypersurfaces, 
called Lie hypersurfaces. 
For a Riemannian manifold $M$, 
an orbit of a cohomogeneity one action on $M$ without singular orbits 
is called a \textit{Lie hypersurface} in $M$. 
In this subsection, 
we review some results on Lie hypersurfaces in symmetric spaces of noncompact type. 

Let $M=G/K$ be an irreducible Riemannian symmetric space of noncompact type, 
and keep the same notations in previous sections. 
Note that Lie hypersurfaces in $M = G/K$ 
have been classified up to isometric congruence (\cite{MR2015258}), 
which we describe here. 
One needs the notation 
\begin{align} 
\LS_E := (\fr{a} \oplus \fr{n}) \ominus \R E \quad (E \in \fr{a} \oplus \fr{n}) ,  
\end{align} 
where $\ominus$ denotes the orthogonal complement. 
Note that, if $E$ is in a suitable place, then $\LS_E$ is a subalgebra of $\fr{a} \oplus \fr{n}$. 

\begin{Thm}[\cite{MR2015258}] \label{5-1}
Let $E \in \fr{a} \oplus \fr{n}$ be a nonzero vector, 
and assume that it satisfies $E \in \LA$ or $E \in \LG_{\alpha_i}$, 
where $\alpha_i \in \Lambda$ is a simple root. 
Then $\LS_E := (\fr{a} \oplus \fr{n}) \ominus \R E$ is a Lie subalgebra of $\fr{a} \oplus \fr{n}$, 
and furthermore, orbits of the corresponding connected Lie subgroup $S_E$ of $S$ 
are Lie hypersurfaces in $M$.
Conversely, every Lie hypersurface in $M$ can be constructed in this way up to isometric congruence. 
\end{Thm}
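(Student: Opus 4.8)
The plan is to separate the direct construction from the converse classification. For the construction, first I would verify that $\LS_E$ is a codimension-one subalgebra directly from the restricted root space decomposition $\fr{a}\oplus\fr{n}=\fr{a}\oplus\bigoplus_{\lambda\in\Sigma^+}\fr{g}_\lambda$, using the two standard facts that $\ad_H$ acts on $\fr{g}_\lambda$ as the scalar $\lambda(H)$ and that $[\fr{g}_\lambda,\fr{g}_\mu]\subset\fr{g}_{\lambda+\mu}$. In the case $E\in\LA$ one has $\LS_E=(\LA\ominus\R E)\oplus\LN$, and closure is immediate because $[\LA,\LA]=0$ and $[\LA,\LN]\subset\LN$. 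In the case $E\in\fr{g}_{\alpha_i}$ with $\alpha_i$ simple one has $\LS_E=\LA\oplus(\LN\ominus\R E)$; here $\ad_H$ acts as a scalar on $\fr{g}_{\alpha_i}$ and hence preserves the complementary subspace $\fr{g}_{\alpha_i}\ominus\R E$, while simplicity of $\alpha_i$ (it is not a sum of two positive roots) forces $[\LN,\LN]\cap\fr{g}_{\alpha_i}=0$, so no bracket of elements of $\LN\ominus\R E$ can acquire a component along $E$.

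Having established that $\LS_E$ is a subalgebra of codimension one, I would let $S_E\subset S$ be the corresponding connected subgroup and observe that its orbits are Lie hypersurfaces. Since $S$ acts simply transitively on $M$ by Proposition~\ref{prop:S-diffeo}, every $S$-stabilizer is trivial, hence so is every $S_E$-stabilizer; therefore each $S_E$-orbit has dimension $\dim S_E=\dim M-1$. Thus the $S_E$-action is of cohomogeneity one with no singular orbits, and its orbits are Lie hypersurfaces by definition.

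For the converse I would regard a given Lie hypersurface as a leaf of the homogeneous codimension-one foliation $\mathcal{F}$ it generates, and reduce the classification of such foliations to the solvable setting. The plan is to show, up to the isometric congruence induced by the normalizer of $\fr{a}$ in $K$ (which acts on $\fr{a}^\ast$ as the Weyl group), that $\mathcal{F}$ coincides with the orbit foliation of a connected codimension-one subgroup $H\subset S$. Then $\Lie(H)$ is a codimension-one subalgebra of $\fr{a}\oplus\fr{n}$, hence equal to $\LS_E$ for the unique line $\R E=(\fr{a}\oplus\fr{n})\ominus\Lie(H)$, and it remains to determine algebraically for which directions $E$ the space $\LS_E$ is a subalgebra and to bring $E$ into normal form: writing $E$ along $\fr{a}$ and the root spaces and imposing $[\LS_E,\LS_E]\subset\LS_E$ should constrain the mixed components, after which the Weyl group together with the scaling invariance $\LS_E=\LS_{tE}$ reduces $E$ to either a vector of $\LA$ or a simple-root vector $E\in\fr{g}_{\alpha_i}$.

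The hard part will be this converse. The reduction showing that an arbitrary cohomogeneity one action without singular orbits is orbit-equivalent to such a solvable subgroup $S_E$ of $S$ is not formal, and the ensuing combinatorial analysis of which directions $E$ give subalgebras — together with the congruence reductions that collapse the redundant cases to one class per simple root and to directions in $\fr{a}$ — carries all of the genuine difficulty. This is the substance of the classification of \cite{MR2015258}, whereas the construction above is essentially bookkeeping with the restricted root system.
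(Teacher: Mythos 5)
The paper itself contains no proof of Theorem~\ref{5-1}: it is imported verbatim from \cite{MR2015258}, so the only benchmark is that reference. Your direct part is correct and complete, and it is the standard argument: for $E \in \LA$ closure of $\LS_E$ follows from $[\LA,\LA]=0$ and $[\LA,\LN]\subset\LN$; for $E \in \fr{g}_{\alpha_i}$ it follows from the scalar action of $\ad_H$ on $\fr{g}_{\alpha_i}$ together with the fact that a simple root is not a sum of two positive roots, so $[\LN,\LN]\cap\fr{g}_{\alpha_i}=0$; and freeness of the $S_E$-action (inherited from the simply transitive $S$-action, with $S_E$ closed in the simply-connected solvable $S$) gives that all orbits are codimension-one, so the action is of cohomogeneity one without singular orbits.

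The genuine gap is in the converse, and it is not only the step you honestly flag (reducing an arbitrary homogeneous codimension-one foliation to the orbit foliation of a subgroup of $AN$, which is indeed the analytic core of \cite{MR2015258}): one concrete algebraic claim in your sketch is false. Imposing $[\LS_E,\LS_E]\subset\LS_E$ does \emph{not} constrain the mixed components of $E$ away. Take $E = aA + xX$ with $a,x \neq 0$, where $X \in \fr{g}_{\alpha_i}$ and $A \in \fr{a}$ is the unit vector along the metric dual $H_{\alpha_i}$ of the simple root $\alpha_i$. Then $\LS_E$ is spanned by $\ker\alpha_i \subset \fr{a}$, the vector $v_0 := -xA + aX$, $\fr{g}_{\alpha_i}\ominus\R X$, and all other positive root spaces, and one checks it is closed: the only brackets that could produce a component along $E$ are $[H, v_0] = a\,\alpha_i(H)X$ with $H \in \ker\alpha_i$, and these vanish, while all brackets of root vectors avoid $\fr{g}_{\alpha_i}$ because $\alpha_i$ is simple, and $[v_0,\fr{g}_\lambda] \subset \fr{g}_\lambda \oplus \fr{g}_{\lambda+\alpha_i}$ never meets $\R E$. (In the extreme case $\RH^2$, \emph{every} line in $\fr{a}\oplus\fr{n}$ is a subalgebra; and the example above lives already in the $B_2$ system of $G_2^\ast(\R^{n+2})$, so it is relevant to the paper's own setting.) Moreover, since $N_K(\fr{a})$ preserves $\fr{a}$ and permutes the root spaces, the Weyl group can never move such a mixed $E$ into $\LA$ or into a single root space; in \cite{MR2015258} the mixed family is collapsed onto the two listed normal forms by genuine isometric congruences of the resulting orbit foliations (in $\RH^2$ the mixed lines give geodesic-plus-equidistant foliations congruent to the $E \in \fr{g}_\alpha$ model), not by Weyl-group normalization of the vector $E$. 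So as written, your normal-form step would fail; what you actually need from the reference is the statement that every codimension-one subalgebra of $\fr{a}\oplus\fr{n}$ is $\LS_E$ with $E_{\fr{n}}$ lying in a single simple root space, together with the congruence argument identifying all such foliations with the models $E \in \LA$ or $E \in \fr{g}_{\alpha_i}$. Since you defer the deep orbit-equivalence step to \cite{MR2015258}, exactly as the paper does, only this mixed-component step needs repair.
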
 

The orbit $S_E.o$ through the origin $o$ can naturally be identified with the Lie group $S_E$ itself, 
equipped with the left-invariant metric induced from $S_E \subset S$. 
We here summarize some properties of the case of $E \in \LA$, 
which is of our particular interest. 

\begin{Prop} 
\label{prop:S_E} 
Let $E \in \LA$ be a nonzero unit vector. 
Then the action of $S_E$ satisfies the following$:$ 
\begin{enumerate} 
\item[$(1)$] 
All orbits of $S_E$ are diffeomorphic to Euclidean spaces. 
\item[$(2)$] 
All orbits of $S_E$ are isometrically congruent to each other. 
\item[$(3)$] 
All orbits of $S_E$ are solvsolitons. 
\end{enumerate} 
\end{Prop}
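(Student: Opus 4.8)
The plan is to prove the three assertions of Proposition~\ref{prop:S_E} by exploiting the explicit structure of $\fr{s}(c) \cong \fr{a} \oplus \fr{n}$ established in Section~\ref{sec:4}, together with the general theory of solvsolitons from Section~\ref{sec:RS}. First I would fix a nonzero unit vector $E \in \LA$ and recall that, by Theorem~\ref{5-1}, the subspace $\LS_E = (\fr{a} \oplus \fr{n}) \ominus \R E$ is a subalgebra, so its orbit through the origin is a well-defined Lie hypersurface identified with the solvable Lie group $S_E$ equipped with its induced left-invariant metric.

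For assertion~(1), I would observe that $S_E$ is a connected, simply-connected solvable Lie group whose nilradical contains all of $\fr{n}$; in fact $\LS_E = \R E' \oplus \fr{n}$ where $E'$ spans $\LA \ominus \R E$ (a one-dimensional complement, since $\dim \LA = 2$). Since $\LA$ is abelian and $\ad_{E'}$ acts on $\fr{n}$, the group $S_E$ is a one-dimensional extension of the nilpotent group $N$; such a simply-connected solvable Lie group is diffeomorphic to Euclidean space via the exponential map composed with the semidirect-product coordinates, so each orbit is diffeomorphic to $\R^{2n-1}$.

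For assertion~(2), the key point is homogeneity under the ambient isometry group. I would argue that the normalizer of $S_E$ in $S$, or more directly the one-parameter subgroup $\exp(\R E)$ acting by isometries of $M = G/K$, moves one orbit of $S_E$ to another; since $\LA$ is abelian, $\exp(tE)$ normalizes $\LS_E$ and hence conjugates $S_E$ to itself, carrying orbits to orbits isometrically. Because $S$ acts transitively on $M$ and the orbits of the cohomogeneity one action $S_E$ fill out $M$, transitivity of the flow $\exp(\R E)$ transverse to the orbits yields that all orbits are isometrically congruent.

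The heart of the proposition, and the step I expect to be the main obstacle, is assertion~(3): that each orbit $S_E$ is a solvsoliton. The plan is to apply Proposition~\ref{prop:solvsoliton}, the rank-reduction result for solvsolitons. For this I must first know that the full solvable model $(\fr{a} \oplus \fr{n}, \langle, \rangle)$ is itself a solvsoliton with some constant $c_0 < 0$; this holds because $\fr{a} \oplus \fr{n}$ is the solvable part of a symmetric space of noncompact type, hence an Einstein solvmanifold (in particular a solvsoliton) by Heber's theory. Granting this, since $E \in \LA$, the orthogonal complement $\LA' := \LA \ominus \R E$ is a subspace of $\LA$, and $\LS_E = \LA' \oplus \fr{n}$ is exactly the subalgebra produced by taking the subspace $\LA'$ of $\LA$. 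Proposition~\ref{prop:solvsoliton} then applies verbatim to conclude that $(\LS_E, \langle, \rangle|_{\LS_E \times \LS_E})$ is a solvsoliton with the same constant $c_0$. The delicate point to verify carefully is that the nilradical of $\LS_E$ really equals $\fr{n}$ (so that $\LA' = \LS_E \ominus \fr{n}$ matches the hypothesis of the proposition), and that $E$ lying in the abelian part $\LA$ guarantees $\LS_E$ is genuinely of the form $\LA' \oplus \LN$ rather than requiring $E$ to be root-vector type. I would confirm this using the explicit bracket relations of Definition~\ref{solv.model}, which show $\fr{n} = \Span\{X_0, Y_i, Z_i, W_0\}$ is nilpotent and $[\LA, \fr{n}] \subset \fr{n}$, so that $\fr{n}$ is precisely the nilradical of both $\fr{a} \oplus \fr{n}$ and of $\LS_E$.
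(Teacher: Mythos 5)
Your proposal is correct, and for the central assertion~(3) it is essentially the paper's own argument: the ambient solvable model $(\fr{a}\oplus\fr{n},\langle,\rangle)$ is Einstein with negative constant (it is isometric to the noncompact symmetric space), hence a solvsoliton with $c<0$ whose nilradical is $\fr{n}$, and Proposition~\ref{prop:solvsoliton} applied to the subspace $\LA':=\LA\ominus\R E$ yields that $\LS_E=\LA'\oplus\fr{n}$ is a solvsoliton with the same constant --- exactly the rank-reduction step the paper performs. Where you genuinely diverge is in assertions~(1) and~(2): the paper disposes of these by citation, quoting \cite[Proposition~1]{BB} for~(1) and \cite{MR2015258} (see also \cite{KT}) for~(2), whereas you sketch direct proofs. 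Your direct arguments do go through, and the cleanest way to consolidate them is to note that $\LS_E\supset\fr{n}=[\LS,\LS]$, so $\LS_E$ is an \emph{ideal} of $\LS$ and $S=\exp(\R E)\,S_E$; then every orbit has the form $S_E.(\exp(tE).o)=\exp(tE).(S_E.o)$, which gives~(2) immediately, and~(1) follows because the $S_E$-action is free (it is the restriction of the simply transitive $S$-action), so every orbit is diffeomorphic to the simply-connected, completely solvable group $S_E$, which is diffeomorphic to a Euclidean space. This buys self-containedness and transparency at the cost of length; the paper's citations buy brevity and full generality. Two points in your write-up should be tightened. First, the proposition is stated for an arbitrary irreducible symmetric space of noncompact type, so you should not assume $\dim\LA=2$: take $\LA'=\LA\ominus\R E$ of dimension $\rank M-1$ and nothing else in your argument changes (your verification that $\fr{n}$ is the nilradical of both $\LS$ and $\LS_E$ also holds in general, since $\ad_A$ is nonzero symmetric for $0\neq A\in\LA$ by the Iwasawa-type condition). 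Second, in~(3) a solvsoliton is a priori a property of the metric Lie algebra, and passing to a Ricci soliton metric on the orbit requires simple connectedness of $S_E$ --- which the paper explicitly extracts from~(1) --- followed by an appeal to~(2) to transfer the conclusion from $S_E.o$ to all orbits; your proposal leaves this last transfer implicit.
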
 

\begin{proof} 
The first assertion follows from \cite[Proposition~1]{BB}, 
and the second assertion has been known in \cite{MR2015258}, see also \cite{KT}. 
We show the third assertion. 
Recall that 
the ambient space $M \cong S$ is Einstein with negative constant. 
It then follows from Proposition~\ref{prop:solvsoliton} that 
the Lie algebra $\fr{s}_E$ is a solvsoliton. 
Hence $S_E.o \cong S_E$ is also a solvsoliton, 
since it is simply-connected by the first assertion. 
By the second assertion, all orbits of $S_E$ are solvsolitons. 
\end{proof} 

\subsection{A particular Lie hypersurface in $G_2^\ast(\R^{n+3})$} 

In this subsection, 
we define and study a particular Lie hypersurface in 
the noncompact real two-plane Grassmannian $M = G_2^\ast(\R^{n+3})$. 
For the latter convenience, we equip $M$ with the metric given by $c = 2 \sqrt{2}$. 
By Theorem~\ref{thm_main4}, the minimal sectional curvature with respect to this metric is $-8$. 

First of all, we define our Lie hypersurface. 
Let $(\fr{s}(2\sqrt{2}) = \fr{a} \oplus \fr{n} , \langle, \rangle, J)$ 
be the solvable model of $G_2^\ast(\R^{n+3})$, defined in Section~\ref{sec:4}. 
Recall that 
\begin{align} 
\fr{a} = \Span \{ A_1, A_2 \} , \quad 
\fr{n} = \Span \{ X_0, Y_1, \ldots, Y_{n-1}, Z_1, \ldots, Z_{n-1}, W_0 \} . 
\end{align} 
Note that $\dim \fr{a} = 2$ and $\dim \fr{n} = 2n$, 
which agree with $\dim G_2^\ast(\R^{n+3}) = 2n+2$. 
We here take a unit vector 
\begin{align}
N := (1/\sqrt{2}) (- A_1-A_2) \in \fr{a}.
\end{align}
By Theorem~\ref{5-1}, one knows that 
$\fr{s}_{N} := (\fr{a}\oplus\fr{n}) \ominus \mathbb{R}N$ is a Lie subalgebra of $\fr{a} \oplus \fr{n}$. 
Note that $\dim \fr{s}_{N} = 2n+1$. 
Let us put 
\begin{align} 
\begin{aligned} 
		\xi       &:= -J N = (1/\sqrt{2}) (-X_0+W_0), \\
		\xi^\perp &:= (1/\sqrt{2}) (-X_0-W_0),\\
		T         &:= J \xi^{\perp} = (1/\sqrt{2}) (-A_1+A_2) . 
\end{aligned} 
\end{align} 
Then, we obtain
\begin{align}
\label{eq:basis} 
\begin{aligned}
 \fr{s}_N 
 &= \Span \{ T \} \oplus \fr{n} \\
 &= \Span \{ \xi, \xi^\perp, T, Y_1,\ldots, Y_{n-1}, Z_1,\ldots, Z_{n-1} \},
\end{aligned}
 \end{align}
which forms an orthonormal basis. 
The bracket relations on $\fr{s}_N$ can be seen from the definition of the solvable model, which are summarized as follows. 

\begin{Lem} 
\label{lem:bracket} 
The nonzero bracket relations of $\fr{s}_N$ with respect to the above basis are given by 
\begin{align*} 
& [\xi, T  ] = 2 \xi^\perp , \quad 
[\xi, Y_i] = -2 Z_i , \quad 
[\xi^\perp, T  ] = 2 \xi , \quad 
[\xi^\perp, Y_i] = -2 Z_i , \\ 
& [T, Y_i] = 2 Y_i , \quad 
[-Z_i, Y_i] = -2 \xi^\perp + 2 \xi . 
\end{align*} 
\end{Lem}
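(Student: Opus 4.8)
The plan is to compute the bracket relations of $\fr{s}_N$ directly from the structure of the solvable model, simply by re-expressing the stated basis $\{\xi, \xi^\perp, T, Y_i, Z_i\}$ in terms of the standard basis $\{A_1, A_2, X_0, Y_i, Z_i, W_0\}$ of $\fr{a} \oplus \fr{n}$ and substituting into the bracket relations recorded in Definition~\ref{solv.model}. All six displayed new basis vectors are explicit linear combinations: $\xi = (1/\sqrt{2})(-X_0 + W_0)$, $\xi^\perp = (1/\sqrt{2})(-X_0 - W_0)$, $T = (1/\sqrt{2})(-A_1 + A_2)$, while $Y_i$ and $Z_i$ are unchanged. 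So the task reduces to expanding brackets bilinearly and collecting terms.

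\medskip

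First I would record the relevant brackets from Definition~\ref{solv.model} with $c = 2\sqrt{2}$: in particular $[A_1, X_0] = c X_0$, $[A_1, Y_i] = -(c/2) Y_i$, $[A_1, Z_i] = (c/2) Z_i$, $[A_2, Y_i] = (c/2) Y_i$, $[A_2, Z_i] = (c/2) Z_i$, $[A_2, W_0] = c W_0$, together with $[X_0, Y_i] = c Z_i$ and $[Y_i, Z_i] = c W_0$, and noting which brackets vanish (e.g.\ $[A_1, W_0] = [A_2, X_0] = 0$). Then I would compute each of the six target brackets. For example, $[\xi, T] = (1/2)[-X_0 + W_0, -A_1 + A_2]$; using $[X_0, A_1] = -c X_0$, $[W_0, A_2] = -c W_0$, and the vanishing of $[X_0, A_2]$ and $[W_0, A_1]$, this collapses to $(1/2)(c X_0 + c W_0) = (c/2)(X_0 + W_0)$, and since $X_0 + W_0 = -\sqrt{2}\,\xi^\perp$ and $c/2 = \sqrt{2}$, one gets $[\xi, T] = 2\xi^\perp$, matching the claim. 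The analogous expansions give the remaining five relations, where $[\xi, Y_i]$ and $[\xi^\perp, Y_i]$ use $[X_0, Y_i] = c Z_i$ with $[W_0, Y_i] = 0$, and $[-Z_i, Y_i] = c W_0 = \ldots$ re-expressed via $W_0 = (1/\sqrt{2})(-\xi^\perp + \xi)\cdot(\text{sign})$ to land on $-2\xi^\perp + 2\xi$.

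\medskip

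The only genuine verifications beyond bilinear bookkeeping are that $\fr{s}_N$ is closed under the bracket (so that it really is the subalgebra promised by Theorem~\ref{5-1}) and that every bracket not listed vanishes; both follow once one checks that no expansion produces a component along $N = (1/\sqrt{2})(-A_1 - A_2)$, equivalently that the coefficient of $A_1 + A_2$ in each output bracket is zero. I would verify this by observing that the $\fr{a}$-component of any bracket lives in $[\fr{n}, \fr{n}] \cap \fr{a}$, which by Definition~\ref{solv.model} is spanned by $W_0$ alone among nilpotent outputs — and $W_0 \in \fr{n}$, so in fact all brackets land in $\Span\{T\} \oplus \fr{n} = \fr{s}_N$ automatically. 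This is reassuring but, since Theorem~\ref{5-1} already guarantees $\fr{s}_N$ is a subalgebra, I would not belabor it.

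\medskip

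I expect no serious obstacle here: the statement is a purely computational lemma, and the main risk is sign errors in the $c/\sqrt{2}$ normalizations and in the antisymmetrization of brackets (note the deliberate form $[-Z_i, Y_i]$ in the statement, which fixes a sign convention one must respect). The cleanest exposition is to tabulate the standard brackets once, then present the six expansions compactly, suppressing the routine arithmetic much as Proposition~\ref{prop:basis} does when it says ``which we omit.'' I would state that the nonlisted brackets vanish by the same direct computation and conclude.
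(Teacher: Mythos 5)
Your proposal is correct and takes essentially the same route as the paper, which gives no separate proof of Lemma~\ref{lem:bracket} beyond the remark that the relations ``can be seen from the definition of the solvable model'' --- that is, exactly the direct substitution of $\xi = (1/\sqrt{2})(-X_0+W_0)$, $\xi^\perp = (1/\sqrt{2})(-X_0-W_0)$, $T = (1/\sqrt{2})(-A_1+A_2)$ into the brackets of Definition~\ref{solv.model} with $c=2\sqrt{2}$ that you describe, including the observation that closure in $\Span\{T\}\oplus\fr{n}$ is automatic. One sign slip to repair in your sample computation: since $[X_0,A_1]=-cX_0$ and $[W_0,A_2]=-cW_0$, the expansion of $[\xi,T]$ collapses to $(1/2)(-cX_0-cW_0)=-(c/2)(X_0+W_0)$, not $(c/2)(X_0+W_0)$; combined with $X_0+W_0=-\sqrt{2}\,\xi^\perp$ this gives the correct $+2\xi^\perp$, whereas your intermediate line as written would produce $-2\xi^\perp$.
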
 

Denote by ${S_N}$ the corresponding connected Lie subgroup of $S$. 
Then we obtain a Lie hypersurface $S_N.o$ in $G_2^\ast(\R^{n+3})$. 

In the remaining of this subsection, 
we show two properties of $S_N.o \cong S_N$. 
The first property is that $S_N.o \cong S_N$ is a contact metric manifold. 
Recall that $G_2^\ast(\R^{n+3})$ is K\"{a}hler, 
and hence its real hypersurface $S_N.o$ admits an almost contact metric structure. 

\begin{Prop}
\label{prop:S_Ncontact} 
The Lie hypersurface $S_N.o \cong S_N$ is a contact metric manifold 
with respect to the structures given in Proposition~\ref{ACMM}. 
\end{Prop}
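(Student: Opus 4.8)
The plan is to verify the defining identity $\Phi = d\eta$ directly in the left-invariant setting on $S_N$, where everything reduces to finite computations with the bracket relations of Lemma~\ref{lem:bracket}. Since $G_2^\ast(\R^{n+3})$ is K\"ahler and $S_N.o$ is a real hypersurface, Proposition~\ref{ACMM} already furnishes an almost contact metric structure $(\eta, \xi, \varphi, g)$; what remains is to check the single condition that upgrades it to a \emph{contact} metric structure. First I would confirm that the characteristic vector field coming from Proposition~\ref{ACMM}, namely $\xi = -JN$, agrees with the vector $\xi = (1/\sqrt{2})(-X_0 + W_0)$ already fixed in the excerpt; this identification is essentially built into the definitions, since $N$ is the unit normal and $J$ is the ambient complex structure restricted along the hypersurface.

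The core computation is to compare the two $2$-forms $\Phi$ and $d\eta$ on the orthonormal basis $\{\xi, \xi^\perp, T, Y_1, \dots, Y_{n-1}, Z_1, \dots, Z_{n-1}\}$. For the left-hand side, I would compute $\varphi$ on each basis vector using $\varphi X = JX - \eta(X)N$ together with the known action of $J$ (which can be read off from Definition~\ref{solv.model} and the relations $J(A_1) = -X_0$, $J(A_2) = W_0$, $J(Y_i) = Z_i$, transported to the basis of $\fr{s}_N$); then $\Phi(X,Y) = g(X, \varphi Y)$ is immediate from orthonormality. For the right-hand side, on a Lie group with left-invariant data one has the Koszul-type formula
\begin{align*}
d\eta(X,Y) = -\eta([X,Y]) = -g([X,Y], \xi)
\end{align*}
for left-invariant vector fields, so $d\eta$ is determined entirely by the bracket relations in Lemma~\ref{lem:bracket} paired against $\xi$. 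The proof then consists in checking, pair by pair, that $g(X, \varphi Y) = -g([X,Y], \xi)$ for all basis pairs $(X,Y)$, which by bilinearity and antisymmetry reduces to finitely many scalar identities.

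I expect the main obstacle to be bookkeeping rather than conceptual difficulty: one must correctly determine $\varphi$ on $\xi^\perp$ and $T$, where the normal-correction term $-\eta(X)N$ and the fact that $J$ does not preserve $\fr{s}_N$ (since $JN = -\xi$ lies in $\fr{s}_N$ but $N$ does not) require care. Concretely, $\varphi \xi = 0$ should hold, $\varphi T$ and $\varphi \xi^\perp$ must be computed from $JT$, $J\xi^\perp$ and then projected, and the pairs contributing to $\xi$ in the bracket are exactly $[\xi, T] = 2\xi^\perp$, $[\xi^\perp, T] = 2\xi$, and $[-Z_i, Y_i] = -2\xi^\perp + 2\xi$, so only these feed into $d\eta(\,\cdot\,,\,\cdot\,)$ after pairing with $\xi$. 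The verification that the coefficient patterns of $\Phi$ and $d\eta$ match on the pairs $(\xi, T)$, $(Y_i, Z_i)$, and that both vanish on all remaining pairs, completes the argument; since all structures are left-invariant, checking the identity on the Lie algebra suffices to establish $\Phi = d\eta$ globally on $S_N$.
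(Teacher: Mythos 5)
Your strategy is exactly the paper's: identify $\xi=-JN$, compute $\varphi$ on the orthonormal basis from $\varphi X = JX-\eta(X)N$, express $d\eta$ through the brackets of Lemma~\ref{lem:bracket}, and check the resulting scalar identities, with left-invariance reducing everything to the Lie algebra. So the route is the same; but two concrete slips would make your verification fail as literally written. First, the factor of $2$: the paper uses Blair's convention $d\eta(X,Y)=\frac12\bigl(X\eta(Y)-Y\eta(X)-\eta([X,Y])\bigr)$, so for left-invariant fields $2\,d\eta(X,Y)=-\eta([X,Y])$, and the identity to check is $2\langle X,\varphi Y\rangle = -\langle [X,Y],\xi\rangle$. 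With your formula $d\eta(X,Y)=-\eta([X,Y])$ the check $\langle X,\varphi Y\rangle=-\langle[X,Y],\xi\rangle$ is false on every nonvanishing pair: e.g.\ $\varphi(T)=JT=-\xi^\perp$ gives $\Phi(\xi^\perp,T)=-1$, while $-\eta([\xi^\perp,T])=-\eta(2\xi)=-2$. You would compute $d\eta=2\Phi$ and wrongly conclude the structure is not contact metric, unless you realize the convention mismatch. Second, your list of contributing pairs is off: $[\xi,T]=2\xi^\perp$ has zero $\xi$-component, so it contributes nothing to $d\eta$ after pairing with $\xi$; the pairs on which both sides are nonzero are $(\xi^\perp,T)$ and $(Y_i,Z_i)$, not $(\xi,T)$ and $(Y_i,Z_i)$. (Both $\Phi(\xi,T)$ and $\eta([\xi,T])$ vanish, so that pair is among the trivially matching ones.) With these two corrections your outline fills in to precisely the proof in the paper, which records $\varphi(\xi)=0$, $\varphi(\xi^\perp)=T$, $\varphi(T)=-\xi^\perp$, and $\varphi$ interchanging $Y_i$ and $Z_i$ up to sign, and then verifies $2\langle X,\varphi(Y)\rangle=-\langle[X,Y],\xi\rangle$ pair by pair.
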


\begin{proof}
Recall that 
the almost contact metric structures of $\fr{s}_N$ are given as follows$:$ 
\begin{itemize}
\item 
the metric $\langle, \rangle$ is the induced metric, 
so that the basis given in (\ref{eq:basis}) is orthonormal, 
\item 
the characteristic vector field is given by $-J N = \xi$,
\item 
the $1$-form $\eta$ is defined by $\eta(X) = \langle X, \xi \rangle$, 
\item 
the $(1,1)$-tensor field $\varphi$ is defined by $\varphi(X) = J X - \eta(X) N$. 
\end{itemize}
We show $\Phi = d \eta$, 
where $\Phi$ is the fundamental $2$-form defined by 
\begin{align} 
\Phi(X,Y) := \langle X , \varphi(Y) \rangle . 
\end{align} 
For $d \eta$, since the structures are left-invariant, one knows that 
\begin{align} 
2 d \eta(X,Y) = - \eta([X,Y]) = - \langle [X , Y] , \xi \rangle . 
\end{align} 
We thus have only to show that 
\begin{align} 
\label{eq:claim5-6} 
2 \langle X , \varphi(Y) \rangle = - \langle [X , Y] , \xi \rangle 
\end{align} 
for all 
$X, Y \in \fr{s}_N$. 
By direct calculations, 
one can see that $\varphi$ satisfies 
\begin{align} 
	\varphi(\xi)=0, \ 
        \varphi(\xi^\perp) = T, \ 
\varphi(T)=-\xi^{\perp} , \ 
\varphi(Y_i)=-Z_i , \ 
        \varphi(Z_i) = - Y_i. 
\end{align}
Therefore, in terms of the bracket relations given in Lemma~\ref{lem:bracket}, 
one can directly check that (\ref{eq:claim5-6}) holds, which completes the proof. 
\end{proof}

\begin{Remark}
Although we gave a proof of Proposition~\ref{prop:S_Ncontact} by direct calculations, 
we have to point out that the result itself has been known. 
In fact, 
Berndt and Suh have classified contact real 
hypersurfaces with constant mean curvature in $G_2^\ast(\R^{n+3})$ (\cite[Theorem~1.2]{MR3326043}), 
and the Lie hypersurface $S_N.o$ is contained in their list. 
In their paper, 
$S_N.o$ is called a horosphere whose center at infinity is the equivalence class of 
an $\mathcal{A}$-principal geodesic in $G_2^\ast(\R^{n+3})$. 
\end{Remark}

The second property which we show in this subsection is that 
$S_N.o \cong S_N$ is a nongradient expanding Ricci soliton. 
In fact, it is a nontrivial solvsoliton. 

\begin{Prop} 
\label{prop:5-6}
The Lie hypersurface $S_N.o \cong S_N$ is a nongradient expanding Ricci soliton. 
\end{Prop}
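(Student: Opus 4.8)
The plan is to invoke Proposition~\ref{prop:S_E}(3), which tells us that because $N \in \fr{a}$ is a nonzero unit vector, the orbit $S_N.o \cong S_N$ is a solvsoliton. The remaining work is to verify that this solvsoliton is \emph{nontrivial}, i.e.\ not Einstein, for then Proposition~\ref{prop:L11} immediately delivers the conclusion that the corresponding left-invariant Ricci soliton metric is nongradient and expanding. So the whole proof reduces to establishing nontriviality.

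\medskip

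\noindent
First I would identify the nilradical $\LN$ of $\fr{s}_N$ and the orthogonal complement $\LA := \fr{s}_N \ominus \LN$. From the bracket relations in Lemma~\ref{lem:bracket}, the element $T$ acts diagonalizably with nonzero eigenvalues on part of the algebra while $\fr{n} = \Span\{\xi,\xi^\perp,Y_i,Z_i\}$ is the span of positive root spaces (hence nilpotent), so one expects $\LN = \fr{n}$ and $\LA = \Span\{T\}$ of dimension one. Then I would apply the rank-reduction criterion for solvsolitons in the form adapted to Heber's theorem, specifically Theorem~\ref{thm:heber}: the ambient solvable model $\fr{a}\oplus\fr{n}$ is Einstein of Iwasawa type, and the subalgebra $\fr{s}_N = \LS_N = (\fr{a}\oplus\fr{n}) \ominus \R N$ obtained by removing the \emph{one-dimensional} direction $\R N \subset \LA$ is Einstein if and only if the mean curvature vector $H_0$ of $\fr{a}\oplus\fr{n}$ lies in $\LA' = \LA \ominus \R N = \R N^\perp$.

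\medskip

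\noindent
The key computational step is therefore to compute the mean curvature vector $H_0 \in \fr{a}$ of the ambient solvable model and check whether it is orthogonal to $N$. Since $H_0$ is defined by $\langle H_0, X\rangle = \tr(\ad_X)$ for all $X$, and since $\tr \ad_X$ vanishes on the nilpotent part, $H_0$ lies in $\fr{a} = \Span\{A_1, A_2\}$. Using the bracket relations of Definition~\ref{solv.model} with $c = 2\sqrt 2$, one computes $\tr \ad_{A_1}$ and $\tr \ad_{A_2}$ by summing the eigenvalues of $\ad_{A_1}$ and $\ad_{A_2}$ on the root spaces; this expresses $H_0$ explicitly as a positive combination of $A_1$ and $A_2$. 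I then compare $H_0$ with $N = (1/\sqrt2)(-A_1-A_2)$: the Einstein condition for $\fr{s}_N$ would require $\langle H_0, N\rangle = 0$, but since $H_0$ has strictly positive $A_1$- and $A_2$-components while $N$ has strictly negative ones, the inner product is nonzero. Hence $H_0 \notin \LA'$, so $\fr{s}_N$ is \emph{not} Einstein, i.e.\ the solvsoliton is nontrivial. Combined with Proposition~\ref{prop:L11}, this proves that $S_N.o \cong S_N$ is a nongradient expanding Ricci soliton.

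\medskip

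\noindent
The main obstacle is purely bookkeeping in the eigenvalue computation: I must correctly account for the multiplicities of the root spaces $\fr{g}_{\alpha_2}$ and $\fr{g}_{\alpha_1+\alpha_2}$ (each of dimension $n-1$ for $G_2^\ast(\R^{n+3})$) when summing the traces, and keep track of the factor $c = 2\sqrt2$. As long as the signs of the $A_1$- and $A_2$-coefficients of $H_0$ come out strictly positive, nonorthogonality with $N$ is automatic and the argument closes cleanly.
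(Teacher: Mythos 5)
Your proposal is correct and follows essentially the same route as the paper: solvsoliton status from Proposition~\ref{prop:S_E}(3), nontriviality via the mean curvature vector $H_0$ of the ambient Einstein solvable model and Heber's rank-reduction criterion (Theorem~\ref{thm:heber}), then Proposition~\ref{prop:L11}. Your test $\langle H_0, N\rangle \neq 0$ is equivalent to the paper's check that $H_0 \notin \Span\{T\} = \fr{a} \ominus \R N$, and your sign argument is sound since the trace computation indeed gives $H_0$ strictly positive $A_1$- and $A_2$-components (the paper computes $H_0 = 2\sqrt{2}\,A_1 + 2\sqrt{2}(n-1)A_2$).
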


\begin{proof} 
We know that $S_N.o \cong S_N$ is a solvsoliton by Proposition~\ref{prop:S_E}. 
Thus, in view of Proposition~\ref{prop:L11}, 
we have only to show that this solvsoliton is nontrivial (that is, not Einstein). 
Recall that $\fr{s}_N$ is a subalgebra of $\fr{s}(2 \sqrt{2}) = \fr{a} \oplus \fr{n}$, 
given by 
\begin{align} 
\fr{s}_N = \fr{a}^\prime \oplus \fr{n} \quad 
(\fr{a}^\prime := \Span \{ T \} \subset \fr{a}) . 
\end{align} 
In terms of the bracket relations of the solvable model, 
one can directly show that the mean curvature vector $H_0$ of $\fr{s}(2 \sqrt{2})$ is given by 
\begin{align} 
H_0 = 2 \sqrt{2} A_1 +  2 \sqrt{2} (n-1) A_2 . 
\end{align} 
Since $T = (1/\sqrt{2})(-A_1+A_2)$ and $n \geq 2$, 
we have $H_0 \not \in \fr{a}^\prime$. 
It hence follows from Proposition~\ref{thm:heber} 
that $\fr{s}_N$ is not Einstein, which completes the proof. 
\end{proof}

\subsection{Constructing an isomorphism} 

We are now in the position to prove our main theorem. 
For the proof, 
we review the left-invariant contact metric Lie group $G_{0,2}$, and its Lie algebra 
\begin{align}
 \fr{g}_{0,2} = \Span \{\hat{\xi}, \hat{X}_1, \ldots, \hat{X}_n, \hat{Y}_1, \ldots, \hat{Y}_n\}.
\end{align}
Note that $\dim \fr{g}_{0,2} = 2n+1$.
By substituting $(\alpha, \beta)=(0,2)$ into Definition~\ref{def:gab}, 
one can see that the nonzero bracket relations on $\fr{g}_{0,2}$ are given by 
\begin{align} 
\begin{aligned}{} 
        [\hat{\xi}, \hat{Y}_i] &= 2 \hat{X}_i & (i \geq 1) , \\
        [\hat{Y}_2, \hat{Y}_i] &= 2 \hat{Y}_i & (i \ne 2) , \\
        [\hat{X}_2, \hat{Y}_2] &= 2 \hat{\xi}, \\
        [\hat{X}_2, \hat{Y}_i] &= 2 \hat{X}_i & (i \ne 2) , \\
        [\hat{X}_i, \hat{Y}_i] &= -2 \hat{X}_2+ 2\hat{\xi} & (i \ne 2) . 
\end{aligned} 
\end{align} 
The left-invariant contact metric structures on $G_{0,2}$ are given as follows$:$ 
\begin{itemize}
\item the metric $\langle, \rangle$ is defined so that the above basis is orthonormal,
\item the characteristic vector field $\xi$ is given by $\hat{\xi}$,
\item the $1$-form $\eta$ is the metric dual of $\hat{\xi}$, that is, $\eta(X) = \langle X, \hat{\xi} \rangle$, 
\item the $(1,1)$-tensor field $\varphi$ is defined by
\begin{align*}
\varphi(\hat{\xi})=0, \quad \varphi(\hat{X}_i) = \hat{Y}_i, \quad \varphi(\hat{Y}_i)=-\hat{X}_i . 
\end{align*}
\end{itemize}

\begin{Thm}
\label{thm:5-7} 
The contact metric manifold $G_{0,2}$ is isomorphic to the Lie hypersurface $S_N.o$ in $G_2^\ast(\R^{n+3})$ 
as contact metric manifolds. 
\end{Thm}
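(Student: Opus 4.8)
The plan is to construct an explicit linear isomorphism $F \colon \fr{g}_{0,2} \to \fr{s}_N$ between the two Lie algebras and verify that it is simultaneously a Lie algebra isomorphism and an isometry that intertwines the contact metric structures. Since both spaces carry an orthonormal basis adapted to the contact structure, the natural strategy is to match basis vectors, sending the characteristic vector field $\hat{\xi}$ to $\xi$, and then to identify the remaining $\hat{X}_i, \hat{Y}_i$ with suitable combinations of $\xi^\perp, T, Y_i, Z_i$. The guiding principle is that $F$ must commute with the $\varphi$-operators: since $\varphi(\hat{X}_i) = \hat{Y}_i$ on one side and $\varphi$ acts on the other side by $\varphi(Y_i) = -Z_i$, $\varphi(\xi^\perp) = T$, and $\varphi(T) = -\xi^\perp$ (from Proposition~\ref{prop:S_Ncontact}), the images of the $\hat{X}_i$ and $\hat{Y}_i$ must be chosen in $\varphi$-compatible pairs.

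First I would pin down the assignment by comparing bracket relations. The bracket $[\hat{X}_2, \hat{Y}_2] = 2\hat{\xi}$ is the only one among the $\hat{X}_i, \hat{Y}_i$ that produces a multiple of $\hat{\xi}$ with no extra terms; on the $\fr{s}_N$ side the relation $[-Z_i, Y_i] = -2\xi^\perp + 2\xi$ and $[\xi, T] = 2\xi^\perp$ must be reconciled, so I expect the distinguished index $2$ in $\fr{g}_{0,2}$ to correspond to the distinguished pair $(\xi^\perp, T)$ rather than to one of the generic $Y_i, Z_i$. Concretely I anticipate a correspondence roughly of the form $\hat{\xi} \mapsto \xi$, $\hat{X}_2 \mapsto T$ (or $\xi^\perp$), $\hat{Y}_2 \mapsto \xi^\perp$ (or $T$), and $\hat{X}_i \mapsto \pm Z_i$, $\hat{Y}_i \mapsto \pm Y_i$ for $i \neq 2$, possibly after reindexing so that the special index $2$ of $\fr{g}_{0,2}$ aligns with the two exceptional directions of $\fr{s}_N$. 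The precise signs and the exact pairing are forced by requiring every structure constant to match, and I would determine them by solving the matching conditions relation-by-relation.

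Once the candidate map $F$ is written down, the verification splits into three routine checks: that $F$ sends orthonormal basis to orthonormal basis (immediate, since both bases are orthonormal and $F$ permutes them up to sign), that $F$ preserves all the bracket relations (comparing Lemma~\ref{lem:bracket} against the $(\alpha,\beta)=(0,2)$ relations of Definition~\ref{def:gab}), and that $F \circ \varphi = \varphi \circ F$ together with $F(\hat\xi) = \xi$, which then automatically gives $F^*\eta = \eta$ by metric duality. Because $F$ is a Lie algebra isomorphism between the Lie algebras of two simply-connected Lie groups, it integrates to a Lie group isomorphism, and being an isometry intertwining the contact data, it yields an isomorphism of contact metric manifolds.

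The main obstacle I expect is the bookkeeping of the bracket-matching, specifically reconciling the two places where $\hat{\xi}$ appears on the right-hand side. On the $\fr{g}_{0,2}$ side both $[\hat{X}_2, \hat{Y}_2]$ and $[\hat{X}_i, \hat{Y}_i]$ ($i \neq 2$) produce terms involving $\hat{\xi}$, but the latter also carries a $-2\hat{X}_2$ term; on the $\fr{s}_N$ side the relation $[-Z_i, Y_i] = -2\xi^\perp + 2\xi$ carries a $\xi^\perp$ correction. Making these cross-terms consistent under a single sign-coherent assignment is where a careless choice will fail, and verifying that one consistent choice exists — rather than needing different conventions for different indices — is the crux of the computation. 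Everything else reduces to direct substitution, so I would present the explicit $F$ and then tabulate the bracket verification, leaving the elementary calculations to the reader.
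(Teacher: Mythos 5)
Your proposal is correct and follows essentially the same route as the paper: the paper's proof also just writes down the explicit basis correspondence (in its conventions, $\xi \mapsto \hat{\xi}$, $\xi^\perp \mapsto \hat{X}_2$, $T \mapsto \hat{Y}_2$, $-Z_i \mapsto \hat{X}_{i}$ or $\hat{X}_{i+1}$, $Y_i \mapsto \hat{Y}_{i}$ or $\hat{Y}_{i+1}$, exactly the sign-coherent, reindexed assignment you predict, with the distinguished pair $(\xi^\perp, T)$ matching the index $2$), checks the brackets against Lemma~\ref{lem:bracket}, and integrates to a Lie group isomorphism using simple-connectedness, whence the contact metric structures correspond by construction. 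Your bracket-matching heuristic (the unique relation producing a pure multiple of the characteristic vector, $[\hat{X}_2,\hat{Y}_2]=2\hat{\xi}$ versus $[\xi^\perp,T]=2\xi$) correctly pins down the map the paper states without explanation.
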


\begin{proof}
Consider the following correspondence from $\fr{s}_N$ to $\fr{g}_{0,2}$:
\begin{align} 
\begin{aligned} 
		 \xi  		&\mapsto \hat{\xi}, 	& \\
 	        -Z_1 		&\mapsto \hat{X}_1, 	& \\
		 \xi^\perp 	&\mapsto \hat{X}_2, 	& \\
        	-Z_i 		&\mapsto \hat{X}_{i+1} & (i \ge 2) , \\
		 Y_1 		&\mapsto \hat{Y}_1, 	& \\
		 T   		&\mapsto \hat{Y}_2,	& \\
		 Y_i 		&\mapsto \hat{Y}_{i+1}	& (i \ge 2) . 
\end{aligned} 
\end{align} 
By comparing the bracket relations on $\fr{s}_N$ and $\fr{g}_{0,2}$, 
one can see that the above correspondence gives an isomorphism from $\fr{s}_N$ to $\fr{g}_{0,2}$ as Lie algebras. 
Since both $S_N$ and $G_{0,2}$ are simply-connected, 
this gives rise to an isomorphism between $S_N$ and $G_{0,2}$ as Lie groups. 
By the definitions of the contact metric structures on $S_N$ and $G_{0,2}$, 
one can see that they are isomorphic as contact metric manifolds. 
\end{proof} 

Therefore, Theorem~\ref{thm} directly follows from Theorem~\ref{thm:5-7}. 
Furthermore, by combining with Proposition~\ref{prop:5-6}, 
we can conclude that $G_{0,2}$ is a nongradient expanding Ricci soliton, 
which completes the proof of Corollary~\ref{cor}.


\begin{thebibliography}{99}

\bibitem{B} 
Berndt, J.: 
Homogeneous hypersurfaces in hyperbolic spaces. 
Math.\ Z. \textbf{229}, 589--600 (1998) 

\bibitem{BB} 
Berndt, J., Br\"uck, M.: 
Cohomogeneity one actions on hyperbolic spaces. 
J.\ Reine Angew.\ Math. \textbf{541}, 209--235 (2001) 

\bibitem{MR3326043} 
Berndt, J., Suh, Y.J.: 
Contact hypersurfaces in {K}\"ahler manifolds. 
Proc.\ Amer.\ Math.\ Soc. \textbf{143}, 2637--2649 (2015) 

\bibitem{MR2015258}
Berndt, J., Tamaru, H.: 
Homogeneous codimension one foliations on noncompact symmetric spaces. 
J. Differential Geom. \textbf{63}, 1--40 (2003) 

\bibitem{BT07} 
Berndt, J., Tamaru, H.: 
Cohomogeneity one actions on noncompact symmetric spaces of rank one. 
Trans.\ Amer.\ Math.\ Soc. \textbf{359}, 3425--3438 (2007) 

\bibitem{B_lec} 
Blair, D.E.: 
Contact manifolds in Riemannian geometry. 
Lecture Notes in Mathematics \textbf{509}. 
Springer-Verlag, Berlin-New York (1976) 

\bibitem{BKP} 
Blair, D.E., Koufogiorgos, T., Papantoniou, B.J.: 
Contact metric manifolds satisfying a nullity condition. 
Israel J.\ Math. \textbf{91}, 189--214 (1995) 

\bibitem{Boe99} 
Boeckx, E.: 
A class of locally $\phi$-symmetric contact metric spaces. 
Arch.\ Math. (Basel) \textbf{72}, 466--472 (1999) 

\bibitem{Boe} 
Boeckx, E.: 
A full classification of contact metric $(k,\mu)$-spaces. 
Illinois J.\ Math. \textbf{44}, 212--219 (2000) 

\bibitem{MR2764404} 
Cho, J.T.: 
Notes on contact Ricci solitons. 
Proc.\ Edinb.\ Math.\ Soc.\ (2) \textbf{54}, 47--53 (2011) 

\bibitem{MR2735600} 
Cho, J.T., Sharma, R.: 
Contact geometry and Ricci solitons. 
Int.\ J.\ Geom.\ Methods Mod.\ Phys. \textbf{7}, 951--960 (2010) 

\bibitem{MR2604955} 
Chow, B., Chu, S.-C., Glickenstein, D., Guenther, C., Isenberg, J., 
Ivey, T., Knopf, D., Lu, P., Luo, F., Ni, L.: 
The Ricci flow: techniques and applications. Part III. Geometric-analytic aspects. 
Mathematical Surveys and Monographs \textbf{163}. 
American Mathematical Society, Providence, RI (2010) 

\bibitem{MR3001334}
Ghosh, A.: 
Ricci solitons and contact metric manifolds. 
Glasg.\ Math.\ J. \textbf{55}, 123--130 (2013) 

\bibitem{MR3333393}
Ghosh, A., Sharma, R.: 
A classification of Ricci solitons as $(k, \mu)$-contact metrics. 
In: 
Suh,~Y.J., et al. (eds.) 
Real and complex submanifolds, pp.~349--358, 
Springer Proc.\ Math.\ Stat. \textbf{106}, Springer, Tokyo (2014) 

\bibitem{MR2434858}
Ghosh, A., Sharma, R., Cho, J.T.: 
Contact metric manifolds with {$\eta$}-parallel torsion tensor. 
Ann.\ Global Anal.\ Geom. \textbf{34}, 287--299 (2008) 

\bibitem{Heb} 
Heber, J.: 
Noncompact homogeneous Einstein spaces. 
Invent.\ Math. \textbf{133}, 279--352 (1998)

\bibitem{Hel} 
Helgason, S.: 
Differential geometry, Lie groups, and symmetric spaces. 
Graduate Studies in Mathematics \textbf{34}. 
American Mathematical Society, Providence, RI (2001) 

\bibitem{KT} 
Kubo, A., Tamaru, H.: 
A sufficient condition for congruency of orbits of Lie groups and some applications. 
Geom.\ Dedicata \textbf{167}, 233--238 (2013) 

\bibitem{L01} 
Lauret, J.: 
Ricci soliton homogeneous nilmanifolds. 
Math.\ Ann. \textbf{319}, 715--733 (2001) 

\bibitem{L11} 
Lauret, J.: 
Ricci soliton solvmanifolds. 
J.\ Reine Angew.\ Math. \textbf{650}, 1--21 (2011) 

\bibitem{PW} 
Petersen, P., Wylie, W.: 
On gradient Ricci solitons with symmetry. 
Proc.\ Amer.\ Math.\ Soc. \textbf{137}, 2085--2092 (2009) 

\bibitem{Sas} 
Sasaki, S.: 
On differentiable manifolds with certain structures which are closely related to almost contact structure.\ I. 
Tohoku Math.\ J.\ (2) \textbf{12}, 459--476 (1960) 

\bibitem{MR2457028}
Sharma, R.: 
Certain results on $K$-contact and $(k,\mu)$-contact manifolds. 
J.\ Geom. \textbf{89}, 138--147 (2008) 

\bibitem{Tan89} 
Tanno, S.: 
Variational problems on contact Riemannian manifolds. 
Trans.\ Amer.\ Math.\ Soc. \textbf{314}, 349--379 (1989) 

\bibitem{Tamaru} 
Tamaru, H.: 
Parabolic subgroups of semisimple Lie groups and Einstein solvmanifolds. 
Math.\ Ann. \textbf{351}, 51--66 (2011) 

\bibitem{MR2265040}
Topping, P.: 
Lectures on the Ricci flow. 
London Mathematical Society Lecture Note Series \textbf{325}. 
Cambridge University Press, Cambridge (2006) 

\end{thebibliography}
\end{document}